\documentclass[11 pt,english]{article}
\usepackage[activeacute]{babel}
\usepackage{amsfonts} 
\usepackage{mathrsfs} 
\usepackage{amssymb, amsmath, amsfonts}
\usepackage{makeidx}
\usepackage{epsfig}
\usepackage{color}\usepackage[labelfont=normal]{subcaption}
\usepackage[T1]{fontenc}
\usepackage{graphics,graphicx,graphpap}
\usepackage[ansinew]{inputenc}
\usepackage{amsthm}
\usepackage{multirow}

\newcommand\blfootnote[1]{%
  \begingroup
  \renewcommand\thefootnote{}\footnote{#1}%
  \addtocounter{footnote}{-1}%
  \endgroup
}

\if@twoside \oddsidemargin 5pt \evensidemargin 5pt \marginparwidth 20pt
 \else \oddsidemargin 5pt \evensidemargin 5pt \marginparwidth 20pt \fi

\marginparsep 10pt \topmargin -12 true mm \headheight 12pt \headsep 25pt

\textheight 23 true cm \textwidth 16 true cm
\columnsep 10pt \columnseprule 0pt

\newcommand{\ZZ}{\mathbb{Z}}

\newcommand{\Aut}{\mathrm{Aut}}
\newcommand{\G}{\Gamma}

\newcommand{\CPM}{\mathop{\rm CPM}}
\newcommand{\CPMall}{\overline{\mathop{\rm CPM}}}
\newcommand{\mb}{\mathbf}

\newcommand{\la}{\langle}
\newcommand{\ra}{\rangle}
\newcommand{\rad}{\mathrm{rad}}
\newcommand{\att}{\mathrm{att}}
\newcommand{\Gall}{\bar{G}}
\newcommand{\Gmall}{\bar{\G}}
\newcommand{\Rall}{\bar{R}}
\newcommand{\Tall}{\bar{T}}
\newcommand{\Vall}{\bar{V}}
\newcommand{\rhoall}{\bar{\rho}}
\newcommand{\tauall}{\bar{\tau}}
\newcommand{\sigmaall}{\bar{\sigma}}
\newcommand{\etaall}{\bar{\eta}}
\newcommand{\nuall}{\bar{\nu}}
\newcommand{\lea}{\langle}
\newcommand{\ria}{\rangle}
\newcommand{\laa}{\langle}
\newcommand{\raa}{\rangle}

\newtheorem{theorem}{Theorem}[section]
\newtheorem{proposition}[theorem]{Proposition}
\newtheorem{corollary}[theorem]{Corollary}
\newtheorem{lemma}[theorem]{Lemma}

\newtheorem{question}[theorem]{Question}

\theoremstyle{definition}
\newtheorem*{remark}{Remark}
\newtheorem{construction}[theorem]{Construction}
\newtheorem{assumption}[theorem]{Assumption}

\begin{document}

\begin{center}
\Large{\textbf{Generalized Gardiner-Praeger graphs and their symmetries}} \\ [+4ex]
\v Stefko Miklavi\v c{\small$^{a, b, c}$}, Primo\v z \v Sparl{\small$^{a, c, d,*}$}, Stephen E.~Wilson{\small$^{e}$}
\\ [+2ex]
{\it \small 
$^a$University of Primorska, Institute Andrej Maru\v si\v c, Koper, Slovenia\\
$^b$University of Primorska, FAMNIT, Koper, Slovenia\\
$^c$Institute of Mathematics, Physics and Mechanics, Ljubljana, Slovenia\\ 
$^d$University of Ljubljana, Faculty of Education, Ljubljana, Slovenia\\
$^e$Northern Arizona University, Flagstaff, Arizona, USA
}
\end{center}

\blfootnote{
Email addresses: 
stefko.miklavic@upr.si (\v Stefko Miklavi\v c),
primoz.sparl@pef.uni-lj.si (Primo\v z \v Sparl),
stephen.wilson@nau.edu (Stephen E. Wilson)\\
* - corresponding author
}


\hrule

\begin{abstract}
A subgroup of the automorphism group of a graph acts {\em half-arc-transitively} on the graph if it acts transitively on the vertex-set and on the edge-set of the graph but not on the arc-set of the graph. If the full automorphism group of the graph acts half-arc-transitively, the graph is said to be {\em half-arc-transitive}. 

In 1994 Gardiner and Praeger introduced two families of tetravalent arc-transitive graphs, called the $C^{\pm 1}$ and the $C^{\pm \varepsilon}$ graphs, that play a prominent role in the characterization of the tetravalent graphs admitting an arc-transitive group of automorphisms with a normal elementary abelian subgroup such that the corresponding quotient graph is a cycle. All of the Gardiner-Praeger graphs are arc-transitive but admit a half-arc-transitive group of automorphisms. Quite recently, Poto\v cnik and Wilson introduced the family of $\CPM$ graphs, which are generalizations of the Gardiner-Praeger graphs. Most of these graphs are arc-transitive, but some of them are half-arc-transitive. In fact, at least up to order $1000$, each tetravalent half-arc-transitive loosely-attached graph of odd radius having vertex-stabilizers of order greater than $2$ is isomorphic to a $\CPM$ graph.

In this paper we determine the automorphism group of the $\CPM$ graphs and investigate isomorphisms between them. Moreover, we determine which of these graphs are $2$-arc-transitive, which are arc-transitive but not $2$-arc-transitive, and which are half-arc-transitive.
\end{abstract}

\hrule

\begin{quotation}
\noindent {\em \small Keywords: tetravalent; $\CPM$ graph; symmetry; automorphism; arc-transitive; half-arc-transitive}
\end{quotation}

\section{Introduction}
\label{sec:Intro}

Investigation of symmetries of graphs has been a very active topic of research for decades. In particular, the examples for which some subgroup $G$ of automorphisms acts transitively on the vertex-set of the graph (such a graph is said to be {\em $G$-vertex-transitive}) and at the same time acts transitively on the edge-set of the graph (and so the graph is also {\em $G$-edge-transitive}) have received much attention. If this group $G$ also acts transitively on the set of all ordered pairs of adjacent vertices (called {\em arcs}) the graph is said to be {\em $G$-arc-transitive} (or {\em $G$-symmetric}). While it is not difficult to see that a cubic graph admitting a vertex- and edge-transitive group of automorphisms $G$ is automatically also $G$-arc-transitive, this is not the case when one considers tetravalent graphs. In other words, there exist tetravalent graphs admitting a group of automorphisms $G$ such that the graph is $G$-vertex-transitive and $G$-edge-transitive, but not $G$-arc-transitive. In such a case this graph is said to be {\em $G$-half-arc-transitive}. 

Numerous papers on tetravalent graphs admitting a vertex- and edge-transitive or even arc-transitive group of automorphisms have been published, by far too many to mention all of them here (but see for instance~\cite{KuzMalPot18, PotWil??, RamSpa19} and the references therein). Mentioning a few of them might serve as a motivation for our topic. In 1994 Gardiner and Praeger~\cite{GarPra94A, GarPra94} initiated the now very well known ``normal quotients'' approach to the investigation of tetravalent graphs admitting an arc-transitive group, say $G$, of automorphisms. In particular, they investigated examples for which the group $G$ has an elementary abelian normal subgroup, say $N$. In~\cite{GarPra94} they focused on the situation in which the quotient graph with respect to the orbits of $N$ is a cycle. Doing so, they introduced two new families of graphs, called the $C^{\pm 1}$ and the $C^{\pm \epsilon}$ graphs. These graphs do not cover all of the examples having the above described property but quite recently, Kuzman, Malni\v c and Poto\v cnik~\cite{KuzMalPot18} managed to complete the characterization initiated in~\cite{GarPra94}. Moreover, they generalized the results of~\cite{GarPra94} by considering not only graphs admitting arc-transitive groups with elementary abelian normal subgroups giving rise to a cycle quotient graph, but also the ones admitting a half-arc-transitive group with this property.

Given that the tetravalent vertex- and edge-transitive graphs are widely studied it is not surprising that various censi of such graphs have been constructed. For instance, Poto\v cnik, Spiga and Verret constructed the census of all tetravalent graphs up to order $1000$ that admit a half-arc-transitive group of automorphisms~\cite{PotSpiVer15}. Similarly, Poto\v cnik and Wilson constructed the census of all known tetravalent edge-transitive graphs up to order $512$~\cite{PotWil??} (this census is potentially not complete). Within it, the authors introduce a new family of graphs, called the {\em CPM graphs}, which are generalizations of the above mentioned $C^{\pm 1}$ and $C^{\pm \epsilon}$ graphs. One of the main aspects of this generalization is that for these graphs the normal subgroup giving rise to the cycle quotient graph need not be elementary abelian. But what makes this generalization really interesting is that while all of the $C^{\pm 1}$ and $C^{\pm \epsilon}$ graphs are arc-transitive, infinitely many $\CPM$ graphs are half-arc-transitive (by which we mean that the full automorphism group of the graph is half-arc-transitive). In this sense the $\CPM$ graphs are also related to the graphs from~\cite{KuzMalPot18}, but as mentioned, there the normal subgroup is again elementary abelian. 
\medskip

The above mentioned connections of the $\CPM$ graphs to the results of~\cite{GarPra94} and~\cite{KuzMalPot18} are good enough reasons for the investigation of this interesting family of graphs. But there is another very important aspect of the fact that some of them are actually half-arc-transitive, which makes their investigation much more intriguing. To be able to explain it we first need to review some terminology and results from the theory of half-arc-transitive graphs. 

Graphs admitting a half-arc-transitive group of automorphisms have been widely studied in the last three decades with the first few results focusing on constructions of such graphs with various additional properties (see for instance~\cite{MalMar99, TayXu94}) or on classifications of such graphs of some restricted orders (see for instance~\cite{AlsXu94, Xu92}). To this day the vast majority of results on graphs admitting a half-arc-transitive group of automorphisms and on half-arc-transitive graphs themselves focus on the tetravalent graphs. 

An important step forward in the investigation of such graphs was made in 1998 when Maru\v si\v c~\cite{Mar98} proposed a method for the investigation of the local structure of such graphs that proved to be very fruitful. We briefly describe the main idea (but see~\cite{Mar98} for more details). Let $\G$ be a tetravalent graph admitting a half-arc-transitive group $G \leq \Aut(\G)$. The action of $G$ on $\G$ then gives rise to two paired orientations of the edges of $\G$. In each of these two oriented graphs the in-valence and out-valence are both equal to $2$. This gives rise to {\em $G$-alternating cycles} of $\G$ which are simply the cycles of $\G$ corresponding to the alternating cycles of any of the above two oriented graphs (cycles on which any two consecutive edges are oppositely oriented). Half of the length of these cycles is called the {\em $G$-radius} of $\G$ and is denoted by $\rad_G(\G)$ and the size of the intersection of any two non-disjoint $G$-alternating cycles is called the {\em $G$-attachment number} and is denoted by $\att_G(\G)$. In the case that $\rad_G(\G) = \att_G(\G)$ the graph $\G$ is said to be {\em tightly $G$-attached}. At the other extreme, if $\att_G(\G) = 2$ or $\att_G(\G) = 1$, respectively, the graph $\G$ is said to be {\em antipodally $G$-attached} or {\em loosely $G$-attached}, respectively. 

The importance of these three situations stems from a result of Maru\v si\v c and Praeger~\cite{MarPra99} and even more so from a recent improvement by Ramos Rivera and \v Sparl~\cite{RamSpa19}. Namely, in~\cite{RamSpa19} a further refinement in the study of the local structure of tetravalent graphs admitting a half-arc-transitive group of automorphisms via the alternating cycles was proposed. By introducing the notion of the alternating jump it was proved that (with the exception of certain well known Cayley graphs of cyclic groups) every tetravalent graph $\G$ admitting a half-arc-transitive group of automorphisms $G$ is either tightly $G$-attached or one can find a cyclic normal subgroup $K$ of $G$ such that the corresponding quotient graph of $\G$ with respect to the orbits of $K$ is a simple tetravalent graph admitting a half-arc-transitive action of the quotient group $G/K$ relative to which the graph is either antipodally or loosely attached. In fact, the results of~\cite{RamSpa19} seem to indicate that, at least when one restricts to the half-arc-transitive graphs, one need not worry about the antipodally attached examples. The fact that the tightly attached tetravalent graphs are classified (see~\cite{Mar98, MarPra99, Spa08, Wil04}) thus calls for a thorough investigation of the loosely attached graphs. 

There is another reason why the loosely attached tetravalent half-arc-transitive graphs should be investigated. In~\cite{AlbAlkMutPraSpi16} the development of the ``normal quotients method'' for the investigation of tetravalent graphs admitting a half-arc-transitive group of automorphisms was initiated. By the above mentioned results from~\cite{RamSpa19} all so-called basic pairs from~\cite{AlbAlkMutPraSpi16} correspond to loosely or possibly antipodally attached graphs. To be able to classify or at least characterize the basic pairs we will thus necessarily have to get a better understanding of the loosely attached graphs.

This finally brings us back to the above mentioned censi from~\cite{PotSpiVer15} and~\cite{PotWil??}. The census from~\cite{PotSpiVer15} reveals that there are 3247 connected tetravalent half-arc-transitive graphs up to order $1000$ but only 20 of them have vertex-stabilizers (in the full automorphism group) which are not isomorphic to $\ZZ_2$ (see for instance~\cite{ConPotSpa15, Mar05, MarNed01} for some results on vertex-stabilizers in tetravalent half-arc-transitive graphs). What is more, all $20$ of these are loosely attached but only five of them have odd radius. Curiously enough, these five all belong to the family of $\CPM$ graphs (see Section~\ref{sec:CPM} for the definition). In particular, they are $\CPM(3,2,7;2)$, $\CPM(3,2,9;2)$, $\CPM(6,2,7;2)$, $\CPM(9,2,7;2)$ and $\CPM(6,2,9;2)$, and so the first two appear also in the census from~\cite{PotWil??}. In addition, the graph $\CPM(3,2,7;2)$, together with its full automorphism group, constitutes a basic pair in the sense of~\cite{AlbAlkMutPraSpi16}. 
\medskip

All of this motivates the investigation of symmetries of the $\CPM$ graphs, which is the main theme of this paper. In particular, we determine the full automorphism group of each $\CPM$ graph (see Theorem~\ref{the:Aut2AT}, Theorem~\ref{the:HAT} and Corollary~\ref{cor:Aut_AT}), show that the $\CPM$ graphs cannot be $3$-arc-transitive (an $s$-arc is a sequence $(v_0, v_1, \ldots , v_s)$ of vertices of the graph such that $v_{i-1} \neq v_{i+1}$ for all $1 \leq i \leq s-1$ and $v_i \sim v_{i+1}$ for all $0 \leq i \leq s-1$) and determine which of them are $2$-arc-transitive, which are arc-transitive but not $2$-arc-transitive, and which are half-arc-transitive (see Theorem~\ref{the:2ATclass} and Theorem~\ref{the:HAT}). We also determine almost all possible isomorphisms between $\CPM$ graphs (see Proposition~\ref{pro:all_iso_2AT} and Proposition~\ref{pro:alliso}).

\section{Notational conventions}
\label{sec:prelim}

Throughout the paper all graphs are assumed to be finite and simple. All considered graphs will be undirected, although we will often be working with an implicit orientation of their edges given by a half-arc-transitive action of a (sub)group of automorphisms of the graph under consideration. 

The residue class ring of integers modulo $n$ will be denoted by $\ZZ_n$ and its group of units by $\ZZ_n^*$. In many of our arguments we will be working with elements from $\ZZ_n$ and equations involving such elements. Such equations should always be considered as equations in $\ZZ_n$. We will quite often write things such as $r^{t} \pm 1 = 0$ for an element $r \in \ZZ_n$ and an integer $t$. By this we mean that (at least) one of $r^{t} + 1 = 0$ or $r^{t} - 1 = 0$ holds in the ring $\ZZ_n$. Sometimes we will also write things such as $r < t$ where $r \in \ZZ_n$ and $t$ is an integer. By this we mean that the smallest nonnegative member of the residue class of $r$ is smaller than $t$. 

When $m$ divides $n$ and $a \in \ZZ_n$, there is a unique $a' \in \ZZ_m$ such that $a' \equiv a \pmod{m}$.  In such cases, we will abbreviate that by saying ``Let $a'$ be $a$ mod $m$''.

For a subset $U$ of the vertex set $V(\G)$ of a graph $\G$ we let $\G[U]$ denote the subgraph of $\G$ induced by $U$. Similarly, for a pair of disjoint subsets $U_1$, $U_2$ of $V(\G)$ we let $\G[U_1, U_2]$ be the bipartite subgraph of $\G$ with vertex set $U_1 \cup U_2$ consisting of all the edges of $\G$ with one end-vertex in $U_1$ and the other in $U_2$. Finally, for two sequences $W_1$, $W_2$ of vertices of $\G$ (usually this will be walks) we denote their concatenation by $W_1 \cdot W_2$.

\section{The $\CPM$ graphs}
\label{sec:CPM}

In this section we start our investigation of the $\CPM$ graphs. They were first introduced by Poto\v cnik and Wilson in~\cite{PotWil??} and are a natural generalization of the $C^{\pm 1}$ and $C^{\pm \epsilon}$ graphs of Gardiner and Praeger~\cite{GarPra94} on one hand and of the $\mathcal{X}_o(m,n;r)$ graphs from~\cite{Mar98, Spa08} or equivalently of the power spider graphs $\mathrm{PS}(m,n,r)$ from~\cite{Wil04} on the other hand. 

\begin{construction}
Let $m,s,n$ be positive integers with $n \geq 3$ and let $r \in \ZZ_n^*$ be such that $r^{ms} \pm 1 = 0$. The graph $\CPMall(m,s,n;r)$ is then the graph with vertex set consisting of all pairs $\laa i; \mb{v}\raa$, where $i \in \ZZ_{ms}$ and $\mb{v} \in \ZZ_n^s$, in which adjacency is defined by the following rule:
$$
	\laa i;\mb{v}\raa \sim \laa i+1; \mb{v} \pm r^i \mb{e_\ell}\raa,
$$ 
where $\ell$ is $i$ mod $s$ and $\mb{e_\ell} \in \ZZ_n^s$ is the $\ell$-th standard vector with the understanding that each $\mb{v} \in \ZZ_n^s$ is viewed as $\mb{v} = (v_0,v_1, \ldots , v_{s-1})$, and so we refer to $v_0$ as the $0$-th component of $\mb{v}$ and to $v_{s-1}$ as the $(s-1)$-th component of $\mb{v}$. The graph $\CPM(m,s,n;r)$ is then defined as the connected component of $\CPMall(m,s,n;r)$, containing the vertex $\laa 0;\mb{0}\raa$. 
\end{construction}

Note that since $i \in \ZZ_{ms}$, the addition $i+1$ is computed modulo $ms$ and similarly $\mb{v} \pm r^i \mb{e_\ell}$ is computed component-wise and modulo $n$. We make the following notational convention. Whenever there will be the need to spell out the components of the $s$-tuple $\mb{v}$ of $\laa i; \mb{v}\raa$ we will write $\laa i; (v_0, v_1, \ldots , v_{s-1})\raa$.
We also introduce the following notation. For each $i \in \ZZ_{ms}$ we let
\begin{equation}
\label{eq:V_i}
	\Vall_i = \{\laa i ; \mb{v}\raa \colon \mb{v} \in \ZZ_n^s\}\quad \text{and}\quad V_i = \Vall_i \cap V(\CPM(m,s,n;r)).
\end{equation}
Of course, the $ms$ sets $\Vall_i$ partition the vertex-set of $\CPMall(m,s,n;r)$ and the $ms$ sets $V_i$ partition the vertex-set of $\CPM(m,s,n;r)$. To illustrate the construction of $\CPM$ graphs and the role of the sets $V_i$ in them, a part of the graph $\CPM(7,2,5;2)$ is depicted on Figure~\ref{fig:example}.
\begin{figure}[h]
\begin{center}
	\includegraphics[scale=0.45]{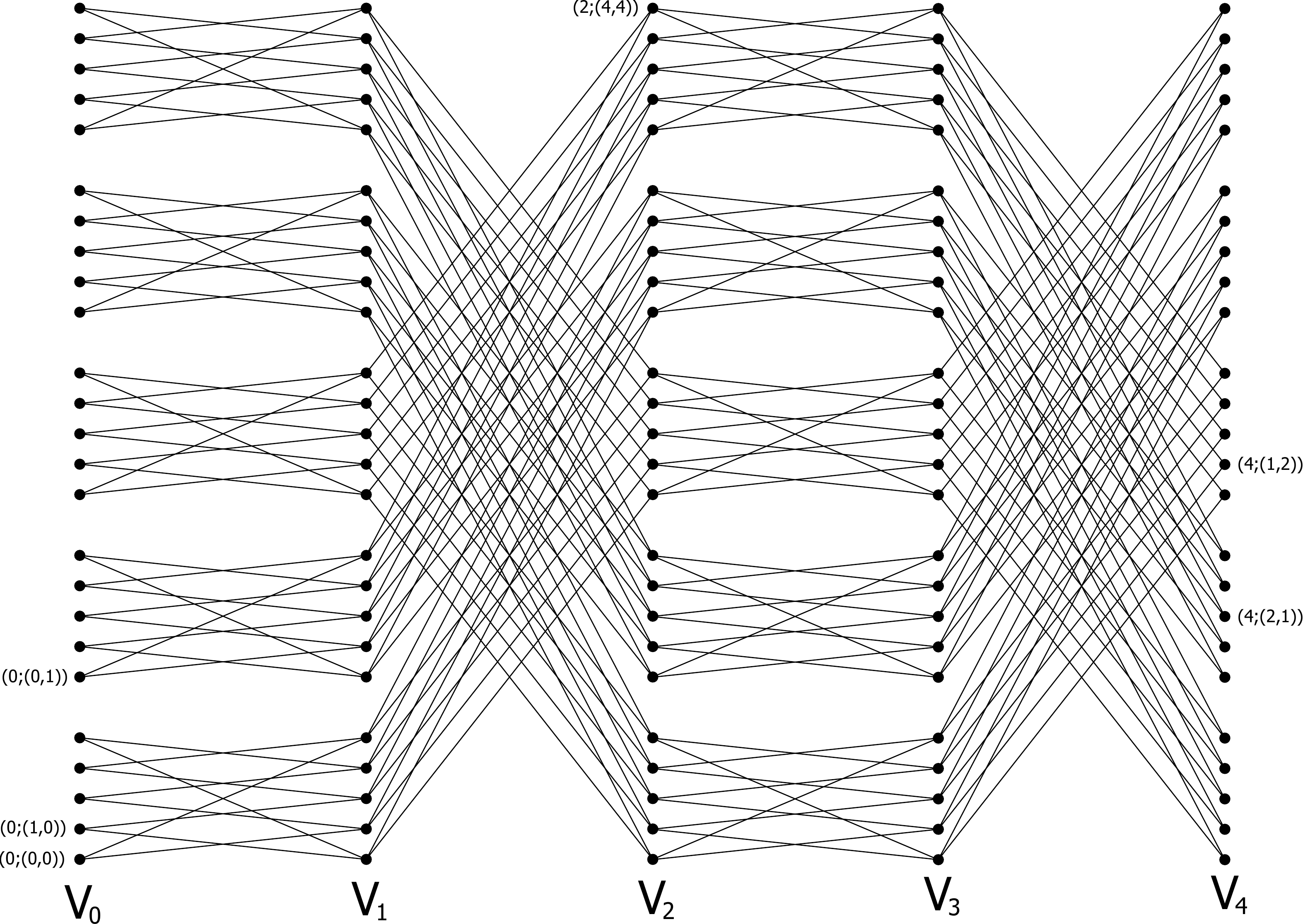}
	\caption{A section of the graph $\CPM(7,2,5;2)$.}
	\label{fig:example}
\end{center}
\end{figure}

\begin{remark}
A few remarks are in order. The first is about notation. In~\cite{PotWil??} the order of the parameters in the definition is slightly different (our $\CPM(m,s,n;r)$ would be $\CPM(n,s,m,r)$ there) and similarly the vertices of these graphs are denoted by pairs $(\mb{v},i)$ instead of $\laa i;\mb{v}\raa$ in~\cite{PotWil??}. However, to be consistent with the notation for the $\mathcal{X}_o(m,n;r)$ graphs from~\cite{Spa08} and of the Power spider graphs $\mathrm{PS}(m,n,r)$ and the way their vertices are denoted in~\cite{Wil04}, we decided to make this small change. This notation is also consistent with~\cite{RamSpa17} which deals with similar graphs of higher valences and in which some of the arguments for determining their symmetries are quite similar to the ones we are using in this paper. We also point out that in~\cite{PotWil??} the authors did not use a different notation for the whole graph $\CPMall(m,s,n;r)$ in the case that it is not connected, but simply worked with the component $\CPM(m,s,n;r)$. Nevertheless, as it may sometimes be convenient to refer to the whole $\CPMall(m,s,n;r)$, which is always of order $msn^s$, we think it is convenient to introduce this special notation for it.

The second remark is about connectedness. As was pointed out in~\cite{PotWil??} it is easy to see that the graph $\CPMall(m,s,n;r)$ is connected (in which case $\CPMall(m,s,n;r) = \CPM(m,s,n;r)$) if and only if $n$ is odd. Moreover, in the case that $n$ is even each connected component of $\CPMall(m,s,n;r)$ (and thus also the graph $\CPM(m,s,n;r)$) has $ms(n/2)^s$ vertices in the case that $m$ is even, and has $2ms(n/2)^s$ vertices in the case that $m$ is odd. More precisely, in the case that $n$ is even the vertex-set of the graph $\CPM(m,s,n;r)$ can be described as follows. If $m$ is even, $V_0$ consists of all the vertices of the form $\laa 0;(v_0,v_1, \ldots , v_{s-1})\raa$ with $v_i$ all even, $V_1$ consists of all the vertices of the form $\laa 1;(v_0,v_1, \ldots , v_{s-1})\raa$ with $v_0$ odd and all other $v_i$ even, etc. If however $m$ is odd, then $V_0$ consists of all the vertices of the form $\laa 0;(v_0,v_1, \ldots , v_{s-1})\raa$ where either all $v_i$ are even or all are odd, $V_1$ consists of all the vertices of the form $\laa 1;(v_0,v_1, \ldots , v_{s-1})\raa$ where either $v_0$ is odd and all other $v_i$ are even, or $v_0$ is even and all other $v_i$ are odd, etc. 

The last remark is about the parameter $s$. In~\cite{PotWil??} it is assumed that $s \geq 2$. The only reason we have decided to allow $s = 1$ in our definition is that in the case of $s = 1$ the corresponding graph $\CPM(m,1,n;r)$ is in fact the graph $\mathrm{PS}(m,n,r)$ which thus also reveals why the $\CPM$ graphs are indeed generalizations of the Power spider graphs. Nevertheless, since the Power spider graphs belong to the well studied family of tetravalent graphs admitting a half-arc-transitive group of automorphisms relative to which they are tightly attached (see~\cite{Mar98, MarPra99, Spa08, Wil04}) we will mainly be concerned only with the graphs with $s \geq 2$.
\end{remark}

Before we start a thorough analysis of symmetries of the $\CPM$ graphs we record some fairly obvious isomorphisms between the graphs $\CPM(m,s,n;r)$ in the case that the whole graph $\CPMall(m,s,n;r)$ is not connected.

\begin{lemma}
\label{le:discon}
Let $m, s, n$ be positive integers with $n \geq 2$ and let $r \in \ZZ_{2n}^*$ be such that $r^{ms} = \pm 1$. Then the following hold:
\begin{itemize}\itemsep = 0pt
\item[(i)] if $m$ is odd then $\CPM(2m,s,2n;r) \cong \CPM(m,s,2n;r)$;
\item[(ii)] if $n$ is odd then, depending on whether $m$ is even or odd, we have that $\CPM(m,s,2n;r) \cong \CPM(m,s,n;r')$ or $\CPM(m,s,2n;r) \cong \CPM(2m,s,n;r')$, respectively, where $r'$ is $r$ mod $n$. 
\end{itemize}
\end{lemma}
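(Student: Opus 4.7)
The plan in both parts is to construct explicit candidate isomorphisms, verify that they preserve adjacency and are injective on the source component, and then invoke the cardinality counts recorded in the preceding remark to conclude surjectivity. Throughout, I will use that $r\in\ZZ_{2n}^*$ must be odd and that the identity $r^{ms}=\pm 1$ lets me absorb sign discrepancies into the $\pm$ appearing in the adjacency rule.

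For part (i), with $m$ odd, I consider the natural folding $\varphi\colon \langle i;\mathbf{v}\rangle\mapsto\langle i\bmod ms;\mathbf{v}\rangle$ from $\CPMall(2m,s,2n;r)$ to $\CPMall(m,s,2n;r)$. Adjacency is preserved because for any $i\in\ZZ_{2ms}$ one has $r^i=\pm r^{i\bmod ms}$ in $\ZZ_{2n}$ and $(i\bmod ms)\bmod s=i\bmod s$. Injectivity on the source component uses the description of the sets $V_j$ from the preceding remark: in $\CPMall(2m,s,2n;r)$ the parameter $2m$ is even, so each $V_j$ is characterized by a single parity pattern, and between $V_j$ and $V_{j+ms}$ every coordinate of $\mathbf{v}$ is flipped exactly $m$ times; since $m$ is odd these two patterns are opposite in every coordinate, so $\langle i;\mathbf{v}\rangle$ and $\langle i+ms;\mathbf{v}\rangle$ cannot both lie in the source component. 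Since $\varphi(\langle 0;\mathbf{0}\rangle)=\langle 0;\mathbf{0}\rangle$ and the two components have the same cardinality, this restriction is the desired isomorphism.

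For part (ii) with $m$ even, the coordinate-wise reduction $\psi\colon\langle i;\mathbf{v}\rangle\mapsto\langle i;\mathbf{v}\bmod n\rangle$ from $\CPM(m,s,2n;r)$ to $\CPM(m,s,n;r')$ preserves adjacency because $r'\equiv r\pmod n$. For injectivity, the two lifts to $\ZZ_{2n}$ of any element of $\ZZ_n$ differ by the odd number $n$ and hence flip parity in the affected coordinate, so only one of them is compatible with the single parity pattern prescribed by $V_i$ in the source component.

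The delicate case is part (ii) with $m$ odd. Here the source $\CPM(2m,s,n;r')$ is connected and has all of $\ZZ_n^s$ at each layer, while the target $\CPM(m,s,2n;r)$ has $ms$ layers $V_j$, each carrying two parity patterns (the ``even-start'' and ``odd-start'' branches of the remark). I will define $\varphi(\langle i;\mathbf{w}\rangle)=\langle i\bmod ms;\mathbf{v}\rangle$, where $\mathbf{v}\in\ZZ_{2n}^s$ is the unique lift of $\mathbf{w}$ whose parity pattern in the target's $V_{i\bmod ms}$ is the even-start one when $0\le i<ms$ and the odd-start one when $ms\le i<2ms$; existence and uniqueness of $\mathbf{v}$ follow from $n$ being odd, and injectivity is then immediate. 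Adjacency preservation within each of the two index ranges is again clear from $r^i=\pm r^{i\bmod ms}$. The main obstacle will be checking the two ``wrap-around'' edges where the choice of lift switches (namely those starting at $i=ms-1$ and $i=2ms-1$ in the source): I will verify directly that, because $m$ is odd, the even-start parity pattern at the target's $V_{ms-1}$ differs in every coordinate from the even-start pattern at $V_0$, so a single-coordinate flip carries the even-start pattern at $V_{ms-1}$ to the odd-start pattern at $V_0$, matching the switch in the lift convention across $i=ms$; an analogous computation handles the other wrap-around. Together with $\varphi(\langle 0;\mathbf{0}\rangle)=\langle 0;\mathbf{0}\rangle$ and the matching component sizes, this completes the proof.
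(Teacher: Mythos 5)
Your proposal is correct, and for part (i) and the $m$-even half of part (ii) it is essentially the paper's own argument: the same folding map $\laa i;\mathbf{v}\raa \mapsto \laa i \bmod ms;\mathbf{v}\raa$ with injectivity coming from the parity count (each coordinate is flipped $m$ times between $V_i$ and $V_{i+ms}$, and $m$ is odd), and the same coordinatewise reduction modulo $n$. Where you genuinely diverge is the $m$-odd half of part (ii). The paper constructs nothing new there: it simply composes the two isomorphisms already at hand, namely $\CPM(m,s,2n;r)\cong\CPM(2m,s,2n;r)$ from (i) and $\CPM(2m,s,2n;r)\cong\CPM(2m,s,n;r')$ from the $m$-even computation applied with $2m$ in place of $m$. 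You instead build (the inverse of) this composite directly, lifting each $\mathbf{w}\in\ZZ_n^s$ to the unique representative in $\ZZ_{2n}^s$ with a prescribed parity pattern and checking the two wrap-around edges by hand; this works, but it buys you only an explicit one-step formula at the price of the parity bookkeeping that the paper's reduction avoids entirely. One detail in that bookkeeping needs correcting: the even-start pattern at the target's $V_{ms-1}$ does \emph{not} differ from the even-start pattern at $V_0$ in every coordinate. Coordinate $s-1$ is flipped only $m-1$ times on the way to layer $ms-1$, so the two patterns agree there and differ exactly in the remaining $s-1$ coordinates; it is precisely this that makes the single flip of coordinate $s-1$ along the wrap edge land on the all-odd (odd-start) pattern at $V_0$. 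If the patterns differed in every coordinate, as you wrote, the single flip would fail to do so. The fact you need is true and your planned direct verification would confirm it, but the justification should be stated as above.
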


\begin{proof}
Suppose first that $m$ is odd. Since $r^{ms} = \pm 1$, we have that $r^{2ms} = 1$, and so the graph $\CPM(2m,s,2n;r)$ is well defined. Let $\Psi \colon \CPM(2m,s,2n;r) \to \CPM(m,s,2n;r)$ be the mapping given by the rule
$$
	\laa i ; \mb{v}\raa\Psi = \laa i_{ms} ; \mb{v}\raa,
$$
where $i_{ms}$ is $i$ mod $ms$. To see that $\Psi$ is a mapping into $\CPM(m,s,2n;r)$ and that it is injective we only need to observe that if for some $0 \leq i \leq ms-1$ the vertices of the form $\laa i ; \mb{v}\raa$ and $\laa i+ms ; \mb{u}\raa$ are both in $\CPM(2m,s,2n;r)$, then for each $0 \leq \ell < s$ the components $v_\ell$ and $u_{\ell}$ are of different parity (as $m$ is odd), and so $\mb{v} \neq \mb{u}$ and $\laa i ; \mb{v}\raa\Psi = \laa i_{ms}; \mb{v}\raa$ and $\laa i + ms ; \mb{u}\raa\Psi = \laa i_{ms} ; \mb{u}\raa$ both belong to $\CPM(m,s,2n;r)$. That $\Psi$ preserves adjacency follows directly from $r^{ms} = \pm 1$, and so $\Psi$ is an isomorphism of graphs.

Suppose next that $n$ is odd and let $r'$ be $r$ mod $n$. It is clear that $r'^{ms} = \pm 1$, so that the graphs $\CPM(m,s,n;r')$ and $\CPM(2m,s,n;r')$ are well defined. Now, if $m$ is even then one can easily verify that the mapping $\Psi' \colon \CPM(m,s,2n;r) \to \CPM(m,s,n;r')$, given by the rule $\laa i ; \mb{v}\raa \Psi' = \laa i ; \mb{v'}\raa$, where $\mb{v'}$ is obtained from $\mb{v}$ by computing each of its components modulo $n$, is an isomorphism of graphs. If however $m$ is odd, then (i) and the first part of this paragraph imply $\CPM(m,s,2n;r) \cong \CPM(2m,s,2n;r) \cong \CPM(2m,s,n;r')$.
\end{proof}

We now record some natural automorphisms of the graphs $\CPMall(m,s,n;r)$. The proof of Proposition~\ref{pro:HATgroup} is rather straightforward but the obtained group $\Gall$ and the corresponding group $G$ acting on $\CPM(m,s,n;r)$ will play a crucial role in the sections to follow. When trying to verify the claims of Proposition~\ref{pro:HATgroup} the following alternative viewpoint regarding the $\CPM$ graphs may be of help. Namely, the graph $\CPMall(m,s,n;r)$ can alternatively be described as a regular cover of the ``doubled cycle'' of length $ms$ (the multigraph obtained from the cycle of length $ms$ by replacing each edge by a pair of parallel edges) with voltage group $\ZZ_{n}^s$ where the two parallel arcs from the vertex $i$ to the vertex $i+1$ are assigned voltages $\pm r^i \mb{e_\ell}$, where $\ell$ is $i$ mod $s$ (for an overview of basic concepts and results from the theory of graph covers via voltage assignments see for instance \cite{Mal98, MalNedSko00}). This explains where certain natural automorphisms of the $\CPM$ graphs come from.

\begin{proposition}
\label{pro:HATgroup}
Let $m, s, n$ be positive integers with $n \geq 3$ and let $r \in \ZZ_n^*$ be such that $r^{ms} = \pm 1$. Let $\Gmall = \CPMall(m,s,n;r)$ and $\G = \CPM(m,s,n;r)$. For each $0 \leq \ell \leq s-1$ let $\rhoall_\ell$ and $\tauall_\ell$ be the permutations of the vertex set of $\Gmall$, given by the rules
\begin{equation}
\label{eq:rho}
\laa i; \mb{v}\raa \rhoall_\ell = \laa i; \mb{v} + \mb{e_\ell}\raa,\ i \in \ZZ_{ms},\ \mb{v} \in \ZZ_n^s,
\end{equation}
\begin{equation}
\label{eq:tau}
\laa i; \mb{v}\raa \tauall_\ell = \laa i; (v_0, v_1, \ldots , v_{\ell - 1}, -v_\ell, v_{\ell + 1}, \ldots , v_{s-1})\raa,\ i \in \ZZ_{ms},\ \mb{v} \in \ZZ_n^s.
\end{equation}
Then $\rhoall_\ell$ and $\tauall_\ell$ are automorphisms of $\Gmall$ and for each $0 \leq \ell, \ell' \leq s-1$ we have that 
\begin{equation}
\label{eq:taurho}
\tauall_{\ell'} \rhoall_\ell \tauall_{\ell'} = \left\{\begin{array}{ccl}	\rhoall_\ell^{-1} & ; & \ell = \ell', \\ \rhoall_{\ell} & ; & \ell \neq \ell'.\end{array}\right.
\end{equation}
In addition, $\Rall = \la \rhoall_0, \rhoall_1, \ldots , \rhoall_{s-1} \ra \cong \ZZ_n^s$, $\Tall = \la \tauall_0, \tauall_1, \ldots , \tauall_{s-1} \ra \cong \ZZ_2^s$ and $\Tall \leq \Aut(\Gmall)_{(0; \mb{0})}$.\\
Similarly, the permutation of the vertex set of $\Gmall$, given by the rule
\begin{equation}
\label{eq:sigma}
\laa i; \mb{v}\raa\sigmaall = \laa i+1;(r v_{s-1}, r v_0, r v_1, \ldots , r v_{s-2})\raa, \ i \in \ZZ_{ms},\ \mb{v} \in \ZZ_n^s,
\end{equation}
is an automorphism of $\Gmall$. Moreover, 
$$
	\sigmaall^{-1}\rhoall_\ell \sigmaall = \rhoall_{\ell + 1}^r\quad \text{and}\quad \sigmaall^{-1} \tauall_\ell \sigmaall = \tauall_{\ell + 1},\quad  0 \leq \ell \leq s-1,
$$
where the index $\ell + 1$ is computed modulo $s$. In addition, the subgroup $\Gall = \la \sigmaall, \rhoall_0, \tauall_0 \ra \leq \Aut(\Gmall)$ acts half-arc-transitively on $\Gmall$ with vertex stabilizers isomorphic to $\ZZ_2^s$ and the sets $\Vall_i$ from \eqref{eq:V_i} are blocks of imprimitivity for the action of $\Gall$. Similarly, the restriction $G$ of the action of the setwise stabilizer of $V(\G)$ in $\Gall$ to $V(\G)$ is half-arc-transitive with vertex-stabilizers isomorphic to $\ZZ_2^s$ and the sets $V_i$ from \eqref{eq:V_i} are blocks of imprimitivity for the action of $G$.
\end{proposition}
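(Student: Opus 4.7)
The plan is to verify everything by direct computation using the adjacency rule and then wrap up with standard group-theoretic bookkeeping. First I would check that $\rhoall_\ell$, $\tauall_\ell$ and $\sigmaall$ are automorphisms of $\Gmall$. For $\rhoall_\ell$ this is immediate, since translating both endpoints of $\laa i;\mb{v}\raa\sim\laa i+1;\mb{v}\pm r^i\mb{e}_{\ell'}\raa$ by $\mb{e}_\ell$ leaves the difference unchanged. For $\tauall_{\ell'}$ the only effect on the difference $\pm r^i\mb{e}_\ell$ is a sign flip when $\ell=\ell'$, which is absorbed by the $\pm$. For $\sigmaall$ a short calculation sends the edge $\laa i;\mb{v}\raa\sim\laa i+1;\mb{v}+r^i\mb{e}_{i\bmod s}\raa$ to $\laa i+1;r\,\mathrm{cyc}(\mb{v})\raa\sim\laa i+2;r\,\mathrm{cyc}(\mb{v})+r^{i+1}\mb{e}_{(i+1)\bmod s}\raa$; the only subtle point is the wrap-around at $i=ms-1$, which is precisely where the hypothesis $r^{ms}=\pm 1$ is needed, the arising $\pm 1$ factor being absorbed by the $\pm$ in the adjacency rule. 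The identities \eqref{eq:taurho} and the obvious pairwise commutativity among the $\rhoall_\ell$'s (respectively the $\tauall_\ell$'s) then yield $\Rall\cong\ZZ_n^s$ and $\Tall\cong\ZZ_2^s$, and $\Tall\le\Aut(\Gmall)_{(0;\mb{0})}$ is clear.

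The conjugation rules $\sigmaall^{-1}\rhoall_\ell\sigmaall=\rhoall_{\ell+1}^r$ and $\sigmaall^{-1}\tauall_\ell\sigmaall=\tauall_{\ell+1}$ are likewise a brief direct computation. Iterating the first gives $\sigmaall^{-\ell}\rhoall_0\sigmaall^\ell=\rhoall_\ell^{r^\ell}$, and since $r\in\ZZ_n^*$ this element generates $\rhoall_\ell$; similarly all $\tauall_\ell$ are recovered, so $\Gall=\la\sigmaall,\rhoall_0,\tauall_0\ra\supseteq\Rall\Tall$. Pushing every $\sigmaall$ to the right via these relations shows that each element of $\Gall$ can be written as $\rho\,t\,\sigmaall^j$ with $\rho\in\Rall$, $t\in\Tall$ and $0\le j<ms$; in the case $r^{ms}=-1$ this uses the identity $\sigmaall^{ms}=\tauall_0\tauall_1\cdots\tauall_{s-1}\in\Tall$ (a direct computation from \eqref{eq:sigma}) to fold $\sigmaall^{ms}$ into the $\Tall$ factor. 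Now $\Rall$ acts regularly on each $\Vall_i$ as the translation group of $\ZZ_n^s$, and $\sigmaall$ cycles the $\Vall_i$'s, so $\Gall$ is vertex-transitive; since all three generators preserve the canonical orientation in which each edge points from $\Vall_i$ to $\Vall_{i+1}$, the group $\Gall$ has at least two orbits on arcs and so is not arc-transitive. For edge-transitivity, any forward arc reduces by $\sigmaall^{-i}$ and an element of $\Rall$ to one of the two forward arcs emanating from $\laa 0;\mb{0}\raa$, and these are swapped by $\tauall_0$. Combining the upper bound $|\Gall|\le ms\cdot n^s\cdot 2^s$ from the normal form with the lower bound $|\Gall|\ge ms\cdot n^s\cdot|\Tall|$ from vertex-transitivity forces $\Gall_{(0;\mb{0})}=\Tall\cong\ZZ_2^s$. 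The $\Vall_i$ are blocks of imprimitivity because they are the orbits of $\Rall$, and $\Rall$ is normal in $\Gall$ by the conjugation relations together with \eqref{eq:taurho}.

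Finally, the claims about $\G$ follow by restriction. The setwise stabilizer of $V(\G)$ in $\Gall$ contains $\Tall$ (which fixes $\laa 0;\mb{0}\raa\in V(\G)$ and hence stabilizes its connected component) and it inherits vertex-, edge- and non-arc-transitivity on $\G$ from the argument above. The sets $V_i=\Vall_i\cap V(\G)$ are blocks for $G$ as intersections of the $\Gall$-blocks $\Vall_i$ with the invariant set $V(\G)$. Faithfulness of the restriction of $\Tall$ to $V(\G)$ is a short check: for $n\ge 3$, every nontrivial element of $\Tall$ moves some vertex in some $V_i$, because each coordinate $v_\ell$ takes a value different from its negative in some $V_i$ (using the parity pattern described in the remark after the construction). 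The main obstacle is the careful bookkeeping for the wrap-around of $\sigmaall$ and the case $r^{ms}=-1$, where $\sigmaall$ has order $2ms$ rather than $ms$ and $\sigmaall^{ms}$ is a nontrivial element of $\Tall$; beyond this, everything is routine verification.
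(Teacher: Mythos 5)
Your proposal is correct and follows essentially the same route as the paper: direct verification that the given permutations are automorphisms and satisfy the stated relations, orientation-preservation to rule out arc-transitivity, and the orbits of the normal subgroup $\Rall$ as blocks — you simply supply the details the paper leaves to the reader, most notably the normal-form/order count (using $\sigmaall^{ms}=\tauall_0\cdots\tauall_{s-1}$ when $r^{ms}=-1$) pinning the vertex-stabilizer down to exactly $\Tall\cong\ZZ_2^s$. The only stylistic difference is in the passage to the component $\G$: the paper exhibits explicit elements ($\tauall_\ell$, $\sigmaall\rhoall_0$, $\rhoall_\ell^2$) preserving $V(\G)$ to get vertex-transitivity of $G$, whereas you invoke the general fact that the setwise stabilizer of a component inherits vertex- and edge-transitivity, which is equally valid.
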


\begin{proof}
The straightforward verifications that $\rhoall_\ell$ and $\tauall_\ell$ are indeed automorphisms of $\Gmall$ and that \eqref{eq:taurho} and the claims about $\Rall$ and $\Tall$ hold are left to the reader. To see that $\sigmaall$ is also an automorphism of $\Gmall$ let $\mb{v} \in \ZZ_n^s$, let $i \in \ZZ_{ms}$ and let $\ell$ be $i$ mod $s$. The two neighbors $\laa i+1; \mb{v} \pm r^{i}\mb{e_\ell}\raa$ of $\laa i; \mb{v}\raa$ in $\Vall_{i+1}$ are mapped by $\sigmaall$ to 
$$
	\laa i+1; \mb{v} \pm r^{i}\mb{e_\ell}\raa\sigmaall = \laa i+2; (r v_{s-1}, r v_0, r v_1, \ldots , r v_{\ell-1}, r (v_{\ell} \pm r^i), r v_{\ell+1}, \ldots, rv_{s-2})\raa,
$$
which are clearly neighbors of $\laa i ; \mb{v}\raa\sigmaall$ (recall that $r^{ms} = \pm 1$), and so $\sigmaall$ is indeed an automorphism of $\Gmall$. The remaining claims about $\sigmaall$ and $\Gall$ are now only a matter of a few simple calculations, which are left to the reader. 

To prove the final part of the proposition assume $n$ is even (so that $\G \neq \Gmall$). Observe that each $\tauall_i$ preserves $\G$, and so the claim that the vertex-stabilizers in $G$ are isomorphic to $\ZZ_2^s$ is clear. Since $\sigmaall\rhoall_0$ and all of the $\rhoall_i^2$ also preserve $\G$, $G$ acts vertex-transitively, and thus also half-arc-transitively on $\G$.
\end{proof}

\begin{remark}
The reader will observe that the orbits of the subgroup $\Rall$ from Proposition~\ref{pro:HATgroup} coincide with the sets $\Vall_i$ from \eqref{eq:V_i}.
Observe also that the nature of the half-arc-transitive action of the group $G$ from Proposition~\ref{pro:HATgroup} is such that if we orient the edge $\laa 0;\mb{0}\raa \laa 1;\mb{e_0}\raa$ from $\laa 0;\mb{0}\raa$ to $\laa 1;\mb{e_0}\raa$, then the corresponding $G$-induced orientation of the edges of $\G = \CPM(m,s,n;r)$ is such that each edge of $\G[V_i, V_{i+1}]$ is oriented from $V_i$ to $V_{i+1}$. Proposition~\ref{pro:HATgroup} thus implies that $G$ acts transitively on the set of $s$-arcs of the corresponding oriented graph. Moreover, it is now clear that the corresponding radius $\rad_G(\G)$ is $n$ or $n/2$, depending on whether $n$ is odd or even, respectively. Furthermore, $\G$ is tightly $G$-attached if and only if $s = 1$ and is loosely $G$-attached otherwise. Since the tetravalent graphs admitting a half-arc-transitive group of automorphisms relative to which they are tightly attached have already been extensively studied~\cite{Mar98, MarPra99, Spa08, Wil04}, we hereafter restrict to the graphs with $s \geq 2$.
\end{remark}

\section{More isomorphisms and additional symmetries}
\label{sec:symiso}

Our primary goal in this paper is to determine the automorphism group of each $\CPM$ graph and to classify the $2$-arc-transitive and the half-arc-transitive members of this family. In view of Proposition~\ref{pro:HATgroup} each $\CPM$ graph admits a half-arc-transitive group of automorphisms (the group $G$), and so it is either half-arc-transitive or arc-transitive. We thus need to determine the situations in which additional automorphisms (not from $G$), making the graph arc-transitive or even $2$-arc-transitive, exist.

To achieve this it clearly suffices to restrict to the connected component $\CPM(m,s,n;r)$ which is what we shall do most of the time in the sections to follow. Nevertheless, it often happens that it is easier to describe certain automorphism (or isomorphisms) for the whole $\CPMall(m,s,n;r)$, so we sometimes work with the whole graphs instead.

\subsection{Isomorphisms}
\label{subsec:iso}

We first record some more isomorphisms between the $\CPM$ graphs. As we shall see, these also enable us to find additional automorphisms of the $\CPM$ graphs in certain situations (see Proposition~\ref{pro:AT}).

\begin{lemma}
\label{le:iso}
Let $m, s, n$ be positive integers with $n \geq 3$ and let $r \in \ZZ_n^*$ be such that $r^{ms} = \pm 1$. Then $\CPM(m,s,n;r) = \CPM(m,s,n;-r)$ and $\CPM(m,s,n;r) \cong \CPM(m,s,n;r^{-1})$.
\end{lemma}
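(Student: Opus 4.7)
The first equality is essentially cosmetic. My plan is to observe that, on any edge of the form $\laa i;\mb{v}\raa \sim \laa i+1;\mb{v}\pm r^i\mb{e_\ell}\raa$, the set of allowed increments $\{\pm r^i\mb{e_\ell}\}$ coincides with $\{\pm (-r)^i\mb{e_\ell}\}$, since $(-r)^i = (-1)^i r^i$ and multiplication by $(-1)^i$ merely permutes $\{+1,-1\}$. The defining condition $r^{ms}=\pm 1$ is likewise symmetric under $r \leftrightarrow -r$, so $\CPMall(m,s,n;r)$ and $\CPMall(m,s,n;-r)$ are literally identical as labeled graphs, whence so are their connected components containing $\laa 0;\mb{0}\raa$.

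For $\CPM(m,s,n;r) \cong \CPM(m,s,n;r^{-1})$, my plan is to exhibit an explicit isomorphism that, morally, reverses the natural $G$-induced orientation described in the Remark following Proposition~\ref{pro:HATgroup}. Concretely, I will define
$$\Psi\colon \laa i;\mb{v}\raa \mapsto \laa -i;\, r\,P\mb{v}\raa,$$
where the first coordinate is taken modulo $ms$ and $P\colon\ZZ_n^s\to\ZZ_n^s$ is the $\ZZ_n$-linear involution sending $\mb{e_\ell}$ to $\mb{e_{(-\ell-1)\bmod s}}$ for $0\leq\ell\leq s-1$. The heuristic is that an edge going from $V_i$ to $V_{i+1}$ in $\CPMall(m,s,n;r)$ should be matched with an edge of $\CPMall(m,s,n;r^{-1})$ running from $V_{-i-1}$ to $V_{-i}$. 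The role of $P$ is to compensate for the shift between the indices $i$ and $-i-1$ of the active basis vector, while the scalar $r$ is there to convert the increment $r^i$ into $(r^{-1})^{-i-1}=r^{i+1}$.

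I would then verify, in this order, that (i) $\Psi$ is a bijection on $\ZZ_{ms}\times\ZZ_n^s$, which is immediate since $r\in\ZZ_n^*$ and $P$ is an involution; (ii) $\Psi$ preserves adjacency, by the short computation
$$\Psi(\laa i+1;\mb{v}+\epsilon r^i\mb{e_\ell}\raa) = \laa -i-1;\, rP\mb{v}+\epsilon r^{i+1}\mb{e_{(-i-1)\bmod s}}\raa,$$
which is a neighbor of $\laa -i;rP\mb{v}\raa=\Psi(\laa i;\mb{v}\raa)$ under the defining adjacency rule for $\CPMall(m,s,n;r^{-1})$; (iii) $\Psi$ fixes the base vertex $\laa 0;\mb{0}\raa$, and so restricts to the required isomorphism between the connected components. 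The only subtle point I anticipate is making sure the construction is self-consistent as $i$ wraps around modulo $ms$, and this is precisely where the hypothesis $r^{ms}=\pm 1$ is used: the ambiguous overall sign produced when crossing index $0$ is harmlessly absorbed by the $\pm$ in the adjacency rule, both for $r$ and for $r^{-1}$.
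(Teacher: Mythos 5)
Your proposal is correct and essentially identical to the paper's proof: since $P$ is just coordinate reversal, your map $\laa i;\mb{v}\raa \mapsto \laa -i; rP\mb{v}\raa$ is exactly the paper's isomorphism $\laa i;\mb{v}\raa\Psi = \laa -i;(rv_{s-1},\ldots,rv_0)\raa$, and the first equality is handled the same way (the $\pm$ in the adjacency rule absorbs the sign of $(-r)^i$). Your explicit attention to the wrap-around modulo $ms$ and to the base vertex $\laa 0;\mb{0}\raa$ fills in details the paper leaves to the reader.
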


\begin{proof}
As mentioned in the above paragraph we prefer to work with the whole graphs $\CPMall$. That $\CPMall(m,s,n;r) = \CPMall(m,s,n;-r)$ is clear from the definition. It is also easy to see that the mapping $\Psi \colon \CPMall(m,s,n;r) \to \CPMall(m,s,n;r^{-1})$, defined by the rule
$$
	\laa i ; \mb{v}\raa\Psi = \laa -i ; (rv_{s-1}, rv_{s-2}, \ldots , rv_1, rv_0)\raa, \ i \in \ZZ_{ms},\ \mb{v} \in \ZZ_n^s,
$$
is an isomorphism of graphs. We leave details to the reader.
\end{proof}

\begin{proposition}
\label{pro:iso2}
Let $m, s, n$ be positive integers with $n \geq 3$ and let $r, r' \in \ZZ_n^*$ be such that $r^{ms} = \pm 1$ and $r'^{ms} = \pm 1$. If any of the following holds
\begin{itemize}\itemsep = 0pt
\item[(i)] $(rr')^s  = \pm 1$ or $(r^{-1}r')^s = \pm 1$;
\item[(ii)] $m$ is divisible by $4$ and either $2((rr')^s \pm 1) = 0$ or $2((r^{-1}r')^s \pm 1) = 0$,
\end{itemize}
then $\CPM(m,s,n;r) \cong \CPM(m,s,n;r')$.
\end{proposition}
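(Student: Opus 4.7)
The plan is to construct an explicit edge-preserving bijection $\Psi\colon \CPMall(m,s,n;r) \to \CPMall(m,s,n;r')$ that fixes $\laa 0; \mb{0}\raa$; this automatically restricts to the required isomorphism $\CPM(m,s,n;r) \to \CPM(m,s,n;r')$. Throughout, set $q = r^{-1}r' \in \ZZ_n^*$. The subcases in (i) and (ii) involving $rr'$ reduce to the corresponding $r^{-1}r'$-subcases by first applying Lemma~\ref{le:iso} to the target graph, since $\CPM(m,s,n;r') \cong \CPM(m,s,n;r'^{-1})$ and this turns any condition on $rr'$ into the same condition on $r^{-1}r'^{-1}$. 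I therefore focus on the $r^{-1}r'$ versions.

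For case~(i) with $(r^{-1}r')^s = \pm 1$, I would take $\Psi$ to be the diagonal linear map $\laa i;\mb{v}\raa \mapsto \laa i; D\mb{v}\raa$, where $D\mb{e_\ell} = q^\ell \mb{e_\ell}$ for each $\ell$. Writing $i = ks + \ell$ with $\ell = i \bmod s$, the pair $\{\laa i; D\mb{v}\raa,\, \laa i+1; D\mb{v} + q^\ell r^i\mb{e_\ell}\raa\}$ (the image of the ``$+$'' edge at position $i$) is an edge of $\CPMall(m,s,n;r')$ precisely when $q^\ell r^i = \pm (r')^i$; using $(r')^i = r^i q^\ell (q^s)^k$, this amounts to $(q^s)^k = \pm 1$ for every admissible $k$, which is equivalent to $q^s = \pm 1$. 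The ``$-$'' edge is handled identically, and $\Psi(\laa 0;\mb{0}\raa) = \laa 0;\mb{0}\raa$ gives the required restriction to $\CPM$.

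For case~(ii) with $4 \mid m$ and (taking the $-1$ sign choice; the $+1$ case is symmetric) $2((r^{-1}r')^s - 1) = 0$, the hypothesis forces $q^s \in \{1,\, 1+n/2\}$. The subcase $q^s = 1$ falls under (i); in the genuinely new subcase $q^s = 1 + n/2$, the fact that $q^s$ must be a unit of $\ZZ_n$ forces $4 \mid n$, and hence $(q^s)^2 = 1$. I would refine the map of (i) into the affine map
\[
	\Psi\colon \laa i;\mb{v}\raa \longmapsto \laa i;\, D\mb{v} + \mb{c_i}\raa,
\]
with the same $D$, where $\mb{c_i} \in \ZZ_n^s$ is determined by $\mb{c_0} = \mb{0}$ and
\[
	\mb{c_{i+1}} - \mb{c_i} = \begin{cases} \mb{0}, & \lfloor i/s \rfloor \text{ even}, \\ (n/2)\, \mb{e_{i \bmod s}}, & \lfloor i/s \rfloor \text{ odd}. \end{cases}
\]
The key calculation is that at position $i = ks + \ell$ the difference $\Psi(\laa i+1; \mb{v} \pm r^i \mb{e_\ell}\raa) - \Psi(\laa i;\mb{v}\raa)$ equals $\pm q^\ell r^i \mb{e_\ell} + (\mb{c_{i+1}} - \mb{c_i})$ and simplifies to $\pm (r')^i \mb{e_\ell}$: for even $k$ the correction vanishes and this is immediate, while for odd $k$ one uses $(r')^i = r^i q^\ell(1 + n/2) = r^i q^\ell + n/2$, since $r^i q^\ell$ is a unit of $\ZZ_n$ (hence odd) and so $(n/2)\cdot r^i q^\ell = n/2$ in $\ZZ_n$. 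The same choice of $\mb{c_{i+1}}-\mb{c_i}$ handles both edge signs simultaneously. The only remaining constraint is cyclicity $\mb{c_{ms}} = \mb{c_0}$, which by telescoping reads $(n/2)(m/2)\mb{1} = \mb{0}$ in $\ZZ_n^s$, where $\mb{1}$ is the all-ones vector -- precisely the assumption $4 \mid m$. The subcase $q^s = -1 + n/2$ proceeds by the same recipe, the only difference being that the ``$+$'' and ``$-$'' labels get swapped in the target for $\lfloor i/s\rfloor$ odd.

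The main obstacle is the bookkeeping in case~(ii): verifying that the parity-based correction vector absorbs exactly the discrepancy between $q^\ell$ and $q^i = q^\ell(q^s)^k$, that a single choice of $\mb{c_{i+1}} - \mb{c_i}$ works simultaneously for both edges at position $i$, and that the telescoped cyclicity sum collapses modulo $n$ precisely under $4 \mid m$. Everything else -- the verification in case~(i), the observation that $\Psi$ fixes $\laa 0;\mb{0}\raa$ and therefore restricts to an isomorphism of the components $\CPM$, and the reduction of the $rr'$-conditions to the $r^{-1}r'$-conditions via Lemma~\ref{le:iso} -- is routine.
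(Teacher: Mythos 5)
Your proposal is correct and follows essentially the same route as the paper: the reduction of the $rr'$-conditions to the $r^{-1}r'$-conditions via Lemma~\ref{le:iso}, the diagonal map $\laa i;\mb{v}\raa \mapsto \laa i;(v_0,qv_1,\ldots,q^{s-1}v_{s-1})\raa$ for case~(i), and in case~(ii) an affine modification by $n/2$-corrections whose telescoping description coincides (block by block modulo $4s$) with the paper's closed-form constants $\delta_j$ in the map $\Phi'$. Your uniform verification of both edge signs and your explicit observation that the cyclicity condition $\mb{c_{ms}}=\mb{c_0}$ is exactly where $4\mid m$ enters are a slightly tidier write-up of the same argument, but not a different proof.
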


\begin{proof}
We again describe the isomorphisms between the whole $\CPMall$ graphs. Suppose first that (i) holds. By Lemma~\ref{le:iso} we have that $\CPMall(m,s,n;r) \cong \CPMall(m,s,n;r^{-1})$, and so we can assume that $(r^{-1}r')^s = \pm 1$ holds. Let $q = r^{-1}r'$ and note that $q \in \ZZ_n^*$. Let $\Phi \colon \CPMall(m,s,n;r) \to \CPMall(m,s,n;r')$ be the mapping defined by the rule
$$
	\laa i ; \mb{v}\raa\Phi = \laa i ; (v_0, q v_1, q^2 v_2, \ldots , q^{s-1} v_{s-1})\raa.
$$ 
Since $q \in \ZZ_n^*$, the mapping $\Phi$ is a bijection. To see that it also preserves adjacency let $\laa i ; \mb{v}\raa$ be a vertex of $\CPMall(m,s,n;r)$ and let $\ell$ be $i$ mod $s$. The two neighbors $\laa i+1 ; \mb{v} \pm r^i \mb{e_\ell}\raa$ of $\laa i ; \mb{v}\raa$ in $\Vall_{i+1}$ are then mapped to 
$$
	\laa i+1 ; (v_0, q v_1, q^2 v_2, \ldots , q^{\ell-1} v_{\ell - 1}, q^\ell (v_\ell \pm r^i), q^{\ell + 1} v_{\ell + 1}, \ldots , q^{s-1}v_{s-1})\raa.
$$
For these to be neighbors of $\laa i ; \mb{v}\raa\Phi$ we require that for each $\delta \in \{-1,1\}$ there exists a $\delta' \in \{-1,1\}$ such that $q^\ell v_\ell + \delta' r'^i = q^\ell(v_\ell + \delta r^i)$, which is equivalent to $\delta' r'^i = \delta q^\ell r^i$. Multiplying by $r^{-i}$ we obtain $\delta' q^i = \delta q^\ell$. Since $q^s = \pm 1$ and $\ell$ is $i$ mod $s$, we have that $q^i = \pm q^\ell$, which thus proves that $\Phi$ is an isomorphism of graphs.

Suppose now that (i) does not hold but (ii) does. As above Lemma~\ref{le:iso} implies that we can assume $2((r^{-1}r')^s \pm 1) = 0$. Denote again $q = r^{-1}r'$. Since $q^s \neq \pm 1$ and $q \in \ZZ_n^*$, it follows that $n = 4\tilde{n}$ for some integer $\tilde{n}$ and that $q^s \pm 1 = 2\tilde{n}$. Let $\Phi' \colon \CPMall(m,s,4\tilde{n};r) \to \CPMall(m,s,4\tilde{n};r')$ be the mapping defined by the rule
$$
	\laa i ; \mb{v}\raa \Phi' = \laa i ; (v_0 + \delta_0, q v_1 + \delta_1, q^2 v_2 + \delta_2, \ldots , q^{s-1} v_{s-1} + \delta_{s-1})\raa,
$$ 
where letting $i_{4s} \in \{0,1,\ldots , 4s-1\}$ be the residue of dividing $i$ by $4s$ we set 
$$
	\delta_j = \left\{\begin{array}{ccc} 2\tilde{n} & ; & s \leq i_{4s} - j - 1 \leq 3s-1, \\ 0 & ; & \mathrm{otherwise}\end{array}\right.
$$
for each $0 \leq j < s$.
That $\Phi'$ is a bijection is clear. The verification that it preserves adjacency is somewhat tedious, but straightforward. We provide here some details and leave the rest of the verification to the reader. For the vertices of the form $\laa i ; \mb{v} \raa$ with $i \leq s$ all of the $\delta_j$ are zero, and so these vertices are mapped by $\Phi'$ in the same way they were mapped by $\Phi$ in the previous paragraph. The first edges to be checked are thus of the form $\laa s; \mb{v}\raa \laa s+1; \mb{v} \pm r^s\mb{e_0}\raa$. We have
$$
	\laa s+1 ; \mb{v} \pm r^s\mb{e_0} \raa \Phi' = \laa s+1 ; (v_0\pm r^s + 2\tilde{n}, qv_1, q^2 v_2, \ldots , q^{s-1}v_{s-1})\raa.
$$
As $q^s = 2\tilde{n} \pm 1$, we have that $r'^s = 2\tilde{n} \pm r^s$, and so each of $\laa s+1 ; \mb{v} \pm r^s\mb{e_0} \raa \Phi'$ is a neighbor of $\laa s ; \mb{v} \raa \Phi'$. To look at just one more example consider the edges of the form $\laa 3s+1; \mb{v}\raa \laa 3s+2; \mb{v} \pm r^{3s+1}\mb{e_{1}}\raa$. For $\laa 3s+1; \mb{v}\raa$ we have that $\delta_j = 2\tilde{n}$ for all $j > 0$ and $\delta_0 = 0$. Therefore,
$$
	\laa 3s+1 ; \mb{v} \raa \Phi' = \laa 3s+1 ; (v_0, qv_1+2\tilde{n}, q^2 v_2+2\tilde{n}, \ldots , q^{s-1}v_{s-1}+2\tilde{n})\raa.
$$
Similarly, 
$$
	\laa 3s+2 ; \mb{v} \pm r^{3s+1}\mb{e_1} \raa \Phi' = \laa 3s+2 ; (v_0, qv_1\pm qr^{3s+1}, q^2 v_2+2\tilde{n}, q^3 v_3+2\tilde{n}, \ldots , q^{s-1}v_{s-1}+2\tilde{n})\raa.
$$
Since $q^{2s} = 1$, it follows that $q^{3s} = q^s = 2\tilde{n} \pm 1$, and so $qv_1 + 2\tilde{n} \pm r'^{3s+1} = qv_1 \pm qr^{3s+1}$, showing that each of $\laa 3s+2 ; \mb{v} \pm r^{3s+1}\mb{e_1} \raa \Phi'$ is a neighbor of $\laa 3s+1 ; \mb{v} \raa \Phi'$.
\end{proof}

\subsection{Arc-transitivity}
\label{subsec:AT}

We now identify certain situations in which the full automorphism group of $\CPM(m,s,n;r)$ contains automorphisms that are not contained in the subgroup $G$ from Proposition~\ref{pro:HATgroup} and ensure that the graph is arc-transitive. As will be proved later (see Theorem~\ref{the:HAT}) these are in fact the only situations in which the graph $\CPM(m,s,n;r)$ is not half-arc-transitive. 

\begin{proposition}
\label{pro:AT}
Let $m, s, n$ be positive integers with $s \geq 2$, $n \geq 3$ and let $r \in \ZZ_n^*$ be such that $r^{ms} = \pm 1$. If $2(r^{2s} \pm 1) = 0$ then the graph $\G = \CPM(m,s,n;r)$ is arc-transitive. 
\end{proposition}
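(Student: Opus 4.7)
The plan is to construct an automorphism $\alpha$ of $\G = \CPM(m,s,n;r)$ that reverses the $G$-induced orientation of edges, where $G$ is the half-arc-transitive group from Proposition~\ref{pro:HATgroup}. Together with $G$, such an $\alpha$ generates a group acting transitively on arcs, giving arc-transitivity. I will construct $\alpha$ as the composition $\Psi^{-1}\circ\Phi$, where $\Psi\colon \G\to\CPM(m,s,n;r^{-1})$ is the orientation-reversing isomorphism of Lemma~\ref{le:iso} (sending $V_i\to V_{-i}$) and $\Phi\colon \G\to\CPM(m,s,n;r^{-1})$ is an orientation-preserving isomorphism supplied by Proposition~\ref{pro:iso2} with $r'=r^{-1}$ (sending $V_i\to V_i$). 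The composition $\alpha$ then sends $V_i\to V_{-i}$; a direct check on the arc $(\laa 0;\mb{0}\raa,\laa 1;\mb{e_0}\raa)$ will show that $\alpha$ sends it to $(\laa 0;\mb{0}\raa,\laa -1;r^{-1}\mb{e_{s-1}}\raa)$, which is oriented oppositely under the $G$-orientation of edges.

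The critical step is to verify that Proposition~\ref{pro:iso2} with $r'=r^{-1}$ actually produces a non-trivial $\Phi$ (the trivial branch via $(rr^{-1})^s=1$ just reproduces $\Psi$ itself, so $\Psi^{-1}\circ\Phi$ would be the identity). The non-trivial possibilities reduce, after substitution, to: case (i) with $(r^{-1}\cdot r^{-1})^s = r^{-2s}=\pm 1$, equivalently $r^{2s}=\pm 1$ (and $q=r^{-2}$); or case (ii) with $4\mid m$ together with $2(r^{-2s}\pm 1)=0$, which is precisely our hypothesis. If $r^{2s}=\pm 1$ we land in case (i) immediately, so the issue is the remaining situation $r^{2s}\neq\pm 1$, where I must establish $4\mid m$.

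Assuming $r^{2s}\neq\pm 1$ and $2(r^{2s}\pm 1)=0$, necessarily $n$ is even and $r^{2s}=\pm 1+n/2$. Since $r\in\ZZ_n^*$ with $n$ even, $r$ is odd, so $r^{2s}$ is odd, forcing $n/2$ to be even; hence $4\mid n$. Then $(n/2)^2\equiv 0\pmod n$ gives $(r^{2s})^2\equiv 1\pmod n$, so the powers of $r^{2s}$ alternate between $1$ (for even exponent) and $r^{2s}$ (for odd exponent). A short case analysis on $m\pmod 4$ together with $r^{ms}=\pm 1$ shows that $m$ odd and $m\equiv 2\pmod 4$ each force $\pm 1+n/2=\pm 1$, i.e.~$n=4$, in which case $r^{2s}\in\{1,3\}=\{\pm 1\}$, contradicting the assumption. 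Hence $4\mid m$, Proposition~\ref{pro:iso2}(ii) supplies $\Phi$, and $\alpha=\Psi^{-1}\circ\Phi$ is the desired orientation-reversing automorphism. The main obstacle will be this parity analysis that forces $4\mid m$ in the ``shifted'' subcase; the rest of the argument is a routine assembly of the isomorphisms from Lemma~\ref{le:iso} and Proposition~\ref{pro:iso2}.
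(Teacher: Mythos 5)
Your proposal is correct and follows essentially the same route as the paper: compose the orientation-reversing isomorphism $\Psi$ of Lemma~\ref{le:iso} with the level-preserving isomorphism $\Phi$ (or $\Phi'$) from the proof of Proposition~\ref{pro:iso2} taken with $r'=r^{-1}$, splitting into the cases $r^{2s}=\pm 1$ and $r^{2s}\neq\pm 1$ with $2(r^{2s}\pm 1)=0$. Your explicit parity argument showing $4\mid n$ and $4\mid m$ in the second case is a welcome detail that the paper only records in the remark following the proposition.
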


\begin{proof}
Let $\Gmall = \CPMall(m,s,n;r)$. In view of Proposition~\ref{pro:HATgroup} it suffices to find an automorphism of $\Gmall$, which fixes the set $\Vall_0$ setwise and interchanges each $\Vall_i$ with $\Vall_{ms-i}$, $1 \leq i \leq ms-1$, where the sets $\Vall_i$ are as in \eqref{eq:V_i}. We obtain such an automorphism from the isomorphism from the proof of Lemma~\ref{le:iso} and the proof of Proposition~\ref{pro:iso2}. To this end let $r' = r^{-1}$, let $\Gmall' = \CPM(m,s,n;r')$ and let $\Psi$ be the isomorphism from the proof of Lemma~\ref{le:iso}. Observe that $\Psi$ maps the set $\Vall_0$ of $\Gmall$ to the set $\Vall_0$ of $\Gmall'$ and for each $1 \leq i < ms$ maps the set $\Vall_i$ of $\Gmall$ to the set $\Vall_{ms-i}$ of $\Gmall'$.

Suppose first that $r^{2s} \pm 1 = 0$. Then $(r^{-1}r')^s = r^{-2s} = \pm 1$, and so the mapping $\Phi$ from the proof of Proposition~\ref{pro:iso2} is an isomorphism from $\Gmall$ to $\Gmall'$. Note that $\Phi$ maps each set $\Vall_i$ of $\Gmall$ to the set $\Vall_i$ of $\Gmall$. Therefore, the mapping $\Phi \Psi^{-1}$ is an automorphism of $\Gmall$ having the required properties. Similarly, if $r^{2s} \pm 1 \neq 0$ but $2(r^{2s} \pm 1) = 0$, then $n = 4\tilde{n}$ for some integer $\tilde{n}$ and $r^{4s} = 1$. It follows that $(r^{-1}r')^s \pm 1 = r^{2s} \pm 1 = 2\tilde{n}$, and so the mapping $\Phi'$ from the proof of Proposition~\ref{pro:iso2} is an isomorphism from $\Gmall$ to $\Gmall'$. We can now proceed as in the case when $r^{2s} \pm 1 = 0$.
\end{proof}

\begin{remark}
A straightforward calculation confirms that in the case that $r^{2s} \pm 1 = 0$ the automorphism $\etaall = \Phi\Psi^{-1}$ from the above proof maps according to the rule
\begin{equation}
\label{eq:eta}
\laa i ; \mb{v}\raa\etaall = \laa -i ; (r^{-2s+1}v_{s-1}, r^{-2s+3}v_{s-2}, \ldots , r^{-3}v_1, r^{-1}v_0)\raa, \ i \in \ZZ_{ms},\ \mb{v} \in \ZZ_n^s.
\end{equation}
Similarly, in the case that $r^{2s} \pm 1 \neq 0$ but $2(r^{2s} \pm 1) = 0$ (recall that this implies that $n = 4\tilde{n}$ for some integer $\tilde{n}$ and that $m$ is also divisible by $4$) the automorphism $\etaall' = \Phi'\Psi^{-1}$ maps according to the rule
\begin{equation}
\label{eq:eta'}
\laa i ; \mb{v}\raa \etaall' = \laa -i ; (r^{-2s+1}v_{s-1}+\delta_0, r^{-2s+3}v_{s-2}+\delta_1, \ldots ,r^{-1}v_0+\delta_{s-1})\raa, \ i \in \ZZ_{ms},\ \mb{v} \in \ZZ_n^s,
\end{equation}
where letting $i_{4s} \in \{0,1,\ldots , 4s-1\}$ be the residue of dividing $i$ by $4s$ we set
$$
	\delta_j = \left\{\begin{array}{ccc} 2\tilde{n} & ; & 2s \leq i_{4s} + j \leq 4s-1,\\ 0 & ; & \mathrm{otherwise}\end{array}\right.
$$
for all $0 \leq j \leq s-1$.
\end{remark}

\subsection{$2$-arc-transitivity}
\label{subsec:2AT}

Observe that in the case of the situation from Proposition~\ref{pro:AT} the group generated by $\etaall$ from \eqref{eq:eta} (or $\etaall'$ from \eqref{eq:eta'}) and the group $\Gall$ from Proposition~\ref{pro:HATgroup} is arc-transitive on $\CPMall(m,s,n;r)$ but the sets $\Vall_i$ from \eqref{eq:V_i} are still blocks of imprimitivity for its action. This implies that this group does not act $2$-arc-transitively on the graph. Nevertheless, we now show that there do exist $2$-arc-transitive $\CPM$ graphs. 

\begin{lemma}
\label{le:aux2AT}
Let $k$ be an odd integer and let $q \in \ZZ_k^*$. Then the mapping $\beta \colon \ZZ_k^3 \to \ZZ_k^3$, given by the rule
$$
	(x,y,z)\beta = (y-qz, 2^{-1}(x+y+qz), 2^{-1}(z-q^{-1}x+q^{-1}y)),
$$
where $2^{-1} \in \ZZ_k^*$ denotes the multiplicative inverse of $2$ in $\ZZ_k$, is an involutory automorphism of the additive group $\ZZ_k^3$.
\end{lemma}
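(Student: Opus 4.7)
The plan is to observe that $\beta$ is manifestly $\ZZ_k$-linear (each coordinate of $(x,y,z)\beta$ is a $\ZZ_k$-linear form in $x,y,z$), so it is automatically a homomorphism of the additive group $\ZZ_k^3$. Since $k$ is odd, $2 \in \ZZ_k^*$ and so $2^{-1}$ is well defined, so the formula makes sense. Hence the only nontrivial content is to check $\beta^2 = \mathrm{id}$; once this is established, $\beta$ is bijective, and together with linearity this gives the claim.

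To verify $\beta^2 = \mathrm{id}$, I would let $(x',y',z') = (x,y,z)\beta$, namely
$$
x' = y - qz,\qquad y' = 2^{-1}(x + y + qz),\qquad z' = 2^{-1}(z - q^{-1}x + q^{-1}y),
$$
and then compute $(x',y',z')\beta$ coordinate by coordinate. The first coordinate is
$$
y' - qz' = 2^{-1}(x+y+qz) - 2^{-1}(qz - x + y) = 2^{-1}(2x) = x,
$$
the second is
$$
2^{-1}(x' + y' + qz') = 2^{-1}\bigl(y-qz + 2^{-1}(x+y+qz) + 2^{-1}(qz - x + y)\bigr) = 2^{-1}(y - qz + y + qz) = y,
$$
and the third is
$$
2^{-1}(z' - q^{-1}x' + q^{-1}y') = 2^{-1}\bigl(2^{-1}(z - q^{-1}x + q^{-1}y) - q^{-1}(y - qz) + 2^{-1}(q^{-1}x + q^{-1}y + z)\bigr) = z,
$$
after the inner $2^{-1}$-bracket collapses to $z + q^{-1}y$.

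There is no real obstacle: this is a purely mechanical verification. The only point that needs attention is ensuring that the cancellations used actually hold in $\ZZ_k$, which they do because each step only uses the ring axioms, the invertibility of $2$ (guaranteed by $k$ odd) and the invertibility of $q$ (given by $q \in \ZZ_k^*$). Writing $\beta$ as a $3 \times 3$ matrix over $\ZZ_k$ and checking that its square is the identity matrix is an equivalent and slightly more compact alternative presentation, which one could use instead to shorten the verification.
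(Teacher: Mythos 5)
Your proposal is correct and follows essentially the same route as the paper: the paper also verifies $\beta^2 = \mathrm{id}$ by a direct computation and notes that additivity is clear from the definition. Your explicit coordinate-by-coordinate check (and the remark that $2^{-1}$ exists since $k$ is odd) simply spells out what the paper leaves as "a straightforward computation".
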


\begin{proof}
A straightforward computation shows that $\beta^2$ fixes each element of $\ZZ_k^3$, thus proving that it is an involutory permutation. That it is also additive is clear from the definition.
\end{proof}

\begin{proposition}
\label{pro:2AT}
Let $n \geq 3$ be an odd integer. Then the graph $\CPM(n,2,n;1)$ is $2$-arc-transitive. Moreover, for any integer $m \geq 2$ with $m \equiv 2 \pmod{4}$ and any $r \in \ZZ_{2m}^*$ with $r^2+1 = m$ the graph $\CPM(m,2,2m;r)$ is $2$-arc-transitive.
\end{proposition}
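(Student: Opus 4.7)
The plan is to exhibit an additional automorphism $\alpha$ that together with $G$ from Proposition~\ref{pro:HATgroup} and the arc-reversing map $\etaall$ of~\eqref{eq:eta} generates a group acting $2$-transitively on the four neighbors of $v_0=\langle 0;\mb 0\rangle$, thereby establishing $2$-arc-transitivity. First I would confirm that $r^{2s}=1$ in both cases: this is immediate in Case~1 (with $r=1$), while in Case~2 one computes $r^{2s}=r^4=(m-1)^2=m^2-2m+1\equiv 1\pmod{2m}$ since $m^2\equiv 0\pmod{2m}$ (using $m\equiv 2\pmod 4$, hence $m$ even). Proposition~\ref{pro:AT} then supplies $\etaall$ and $\langle G,\etaall\rangle$ acts arc-transitively. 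However, because $\etaall$ maps each $\Vall_i$ to $\Vall_{-i}$, the stabilizer of $v_0$ in $\langle G,\etaall\rangle$ respects the partition of the four neighbors of $v_0$ into two ``out-neighbors'' (in $V_1$) and two ``in-neighbors'' (in $V_{-1}$), so at least two orbits on the twelve $2$-arcs through $v_0$ persist.

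To break this partition I would construct $\alpha$ using the involution $\beta$ of $\ZZ_k^3$ from Lemma~\ref{le:aux2AT}. In Case~1 I use $k=n$ and $q=1$. In Case~2 I use $k=m/2$ (which is odd since $m\equiv 2\pmod 4$) and $q=r\bmod(m/2)$; the hypothesis $r^2=m-1$ in $\ZZ_{2m}$ reduces modulo $m/2$ to $q^2\equiv -1$, ensuring that $q\in\ZZ_k^*$ and that the coefficients $q^{\pm 1}$ and $2^{-1}$ appearing in $\beta$ are well-defined. The idea is to identify a suitable $\ZZ_k^3$-structure on a subset of the vertex set via three commuting translations built from the generators $\rhoall_0,\rhoall_1,\sigmaall$ (in Case~1 these are $\rhoall_0,\rhoall_1,\sigmaall^2$, which commute because $r^2=1$; in Case~2 one must use $\sigmaall^4$ instead since $r^4=1$ but $r^2\neq 1$), and then define $\alpha$ to act as $\beta$ on these coordinates with a consistent extension to the rest of the vertex set.

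Once $\alpha$ is defined, the rest of the proof consists of (i) checking that $\alpha$ preserves adjacency, which is a linear-algebraic computation invoking $\beta^2=\mathrm{id}$, the additivity of $\beta$ and the identity $q^2\equiv -1$ (in Case~2); and (ii) checking that $\alpha$ fixes $v_0$ and maps an out-out $2$-arc at $v_0$ to an out-in $2$-arc. Together (i) and (ii) show that the stabilizer of $v_0$ in $\langle G,\etaall,\alpha\rangle$ acts $2$-transitively on its four neighbors, giving $2$-arc-transitivity.

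The main obstacle is pinning down the precise coordinate identification with $\ZZ_k^3$ and verifying adjacency-preservation, especially in Case~2 where one has to handle parity issues arising from $n=2m$ being even and the non-commutativity of $\rhoall_0,\rhoall_1,\sigmaall^2$; the formula in Lemma~\ref{le:aux2AT} is designed precisely to make this computation succeed, but the bookkeeping is substantial.
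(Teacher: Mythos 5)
Your overall strategy coincides with the paper's: use Proposition~\ref{pro:AT} to get arc-transitivity, then exhibit one extra automorphism built from the involution $\beta$ of Lemma~\ref{le:aux2AT} that fixes $\laa 0;\mb{0}\raa$ and one of its neighbors while mixing the in- and out-neighbors. In Case~1 your plan is essentially the paper's proof (identify the vertices with $\ZZ_2\times\ZZ_n^3$ and let $\beta$ with $q=1$ act on the $\ZZ_n^3$ coordinates; the only check is that $\beta$ permutes the four edge-vectors $(1,\pm1,0),(-1,0,\pm1)$, which it does), so the deferred verifications there are routine.

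In Case~2, however, there is a genuine gap, and it is exactly the part you postpone as ``a consistent extension to the rest of the vertex set.'' Two concrete problems. First, the translations $\rhoall_0,\rhoall_1,\sigmaall^4$ generate a group isomorphic to $\ZZ_{4k}^2\times\ZZ_k$ (with $m=2k$), not $\ZZ_k^3$, so they do not furnish the claimed $\ZZ_k^3$-structure; the correct identification is of the \emph{whole} vertex set with $\ZZ_{4k}^3\cong\ZZ_4^3\times\ZZ_k^3$ via the Chinese remainder theorem. Second, and more seriously, the formula of Lemma~\ref{le:aux2AT} cannot supply the $\ZZ_4^3$-component of the desired automorphism, since $2$ is not invertible modulo $4$, and the identity on that component fails as well: $\beta$ sends the mod-$k$ edge-vector class of type $(1,-1,0)$ to one of type $(-1,0,q)$, so the mod-$4$ component must simultaneously permute the mod-$4$ edge-vectors in a compatible way. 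The paper resolves this by choosing a specific nontrivial automorphism $\nu_4$ of the quotient $\CPM(2,2,4;1)$ (a reflection of its toroidal embedding), gluing $\nu_4$ with $\nu_k=\beta$ by CRT, and verifying compatibility through a case-by-case table over the $16$ residues of $\mb{v}\wp_4$ and the corresponding edge-vectors. Nothing in your proposal identifies this mod-$4$ ingredient or explains why a compatible choice exists, so the adjacency-preservation in Case~2 is not merely ``substantial bookkeeping'' with $\beta$: it requires an additional construction that is the heart of the paper's argument for $\CPM(m,2,2m;r)$. (The subcase $m=2$, where the graph is the $4$-cube, also needs to be noted separately, as the paper does.)
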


\begin{proof}
In each of the two cases $r^{2s} = r^4 = 1$, and so Proposition~\ref{pro:AT} implies that these graphs are all arc-transitive. In view of the action of the group $\Tall$ from Proposition~\ref{pro:HATgroup} it thus suffices to prove that there exists an automorphism fixing $\laa 0; (0,0) \raa$ and $\laa 1 ; (1,0)\raa$ while moving $\laa 1 ; (-1,0) \raa$.\smallskip

\noindent
{\sc Case 1}: $\Gmall = \CPMall(n,2,n;1)$ with $n \geq 3$ odd.\\
To define an appropriate automorphism of $\Gmall$ we first change the viewpoint regarding the vertices of $\Gmall$. Note that, since $n$ is odd, $\ZZ_{2n} \cong \ZZ_2 \times \ZZ_n$, and so we can regard the vertex-set of $\Gmall$ as being $\ZZ_2 \times \ZZ_n^3$ (we write the elements of this set as pairs $\laa e ; (x,y,z)\ra$, $e \in \ZZ_2$ and $x,y,z \in \ZZ_n$) in the sense that the vertex $\laa i ; (y,z) \raa$ of $\Gmall$ corresponds to $\laa i_2; (i_n, y, z)\raa$, where $i_2$ is $i$ mod $2$ and $i_n$ is $i$ mod $n$. Adjacency in $\Gmall$ thus translates as follows. For any $\mb{v} \in \ZZ_n^3$ we have that 
$$
\laa 0; \mb{v}\raa \sim \laa 0; \mb{v}\raa + \laa 1; \mb{f}\raa\ \text{and}\ \laa 1; \mb{v}\raa \sim \laa 1; \mb{v}\raa - \laa 1; \mb{f}\raa,
$$
where $\mb{f} \in \{(1,1,0), (1,-1,0), (-1,0,1), (-1,0,-1)\}$. 
Let now $\nuall$ be the permutation of the vertex-set $\ZZ_2 \times \ZZ_n^3$ given by the rule
\begin{equation}
\label{eq:nu}
	\la e; \mb{v} \raa \nuall = \la e; \mb{v}\beta \raa,
\end{equation}
where $\beta$ is as in Lemma~\ref{le:aux2AT} for $q = 1$. By Lemma~\ref{le:aux2AT} the mapping $\nuall$ is indeed bijective and is in fact an involutory automorphism of the additive group $\ZZ_2 \times \ZZ_n^3$. Since $\beta$ fixes setwise the above set of the four possibilities for $\mb{f}$ (it fixes the first and the third and interchanges the remaining two), it thus follows that $\nuall$ is an automorphism of $\Gmall$. Since it fixes the vertices $\laa 0 ; (0,0,0) \raa$ (corresponding to $\laa 0 ; (0,0) \raa$) and $\laa 1; (1,1,0) \raa$ (corresponding to $\laa 1; (1,0) \raa$) and maps $\laa 1; (1,-1,0) \raa$ (corresponding to $\laa 1 ; (-1,0) \raa$) to $\laa 1 ; (-1,0,-1)\raa$ (corresponding to $\laa -1 ; (0,-1)\raa$), this proves that $\Gmall$ is indeed $2$-arc-transitive.
\smallskip

\noindent
{\sc Case 2}: $\Gmall = \CPMall(m,2,2m;r)$ with $m \equiv 2 \pmod 4$ and $r^2 + 1 = m$.\\
If $m = 2$, then $\CPM(2,2,4;1)$ is the skeleton of the $4$-cube which is known to be $2$-arc-transitive, and so we can assume that $m > 2$. Note that $r^3 = m - r$ and $r^4 = 1$. Write $m = 2k$ and note that $k$ is odd. The vertex-set of $\Gmall$ can then be regarded as $\ZZ_{4k}^3$ in which case adjacency can be described in such a way that for each $\mb{v} = (x,y,z) \in \ZZ_{4k}^3$ we have that $(x,y,z) \sim (x,y,z) + \mb{f}$, where 
\begin{equation}
\label{eq:adj2AT}
	\mb{f} = \left\{\begin{array}{ccl}
			(1, \pm 1, 0) & \mathrm{if} & x \equiv 0 \pmod{4}\\
			(1, 0, \pm r) & \mathrm{if} & x \equiv 1 \pmod{4}\\
			(1, \pm r^2, 0) & \mathrm{if} & x \equiv 2 \pmod{4}\\
			(1, 0, \pm r^3) & \mathrm{if} & x \equiv 3 \pmod{4}.\end{array}\right.
\end{equation}
It thus follows that reducing each of the three components of the vertices modulo $4$ yields a graph epimorphism $\wp_4$ from $\Gmall$ to $\CPMall(2,2,4;1)$ (in fact, it is a covering projection). One possible way to complete the proof is thus to show that the entire automorphism group of $\CPMall(2,2,4;1)$ lifts along this covering projection. To make the paper independent of the theory of lifting automorphisms along covering projections we provide a direct construction of an appropriate automorphism. 

By Lemma~\ref{le:iso} we can assume $r \equiv 3 \pmod {4}$. Let $q$ be $r$ mod $k$. Then
\begin{equation}
\label{eq:tab_4q}
 \begin{array}{|c|c|}
 	\hline
 	r \equiv -1 \pmod{4} & r \equiv q \pmod{k} \\ \hline
 	r^2 \equiv 1 \pmod{4} & r^2 \equiv -1 \pmod{k} \\ \hline
 	r^3 \equiv -1 \pmod{4} & r^3 \equiv -q \pmod{k} \\ \hline 
 \end{array}
\end{equation}
Let $\nu_4$ be the automorphism of $\CPM(2,2,4;1)$ corresponding to the reflection of the toroidal embedding of $\CPM(2,2,4;1)$ given in Figure~\ref{fig:4-cube} with respect to the axis through the cycle with vertices $(2,3,1)$, $(3,2,1)$, $(0,2,2)$ and $(1,3,2)$, where we denote the vertices in the above agreed manner.
\begin{figure}[h]
\begin{center}
	\includegraphics[scale=1.2]{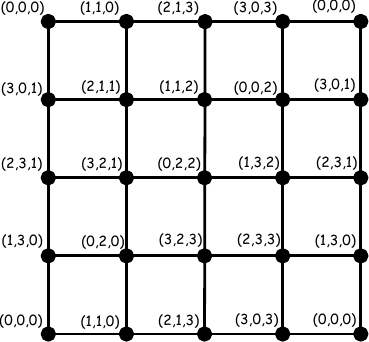}
	\caption{A toroidal embedding of the graph $\CPM(2,2,4;1)$.}
	\label{fig:4-cube}
\end{center}
\end{figure}
Let $\wp_k$ be the epimorphism of the additive group $\ZZ_{4k}^3$ onto $\ZZ_k^3$, corresponding to the reduction of all three components modulo $k$. Finally, let $\nu_k$ be the automorphism of the additive group $\ZZ_k^3$ corresponding to $\beta$ from Lemma~\ref{le:aux2AT} (with the above defined $q$). Now, let $\mb{v} = (x,y,z)$ be a vertex of $\G = \CPM(m,2,2m;r)$. By the Chinese remainder theorem there exists a unique $\mb{v}' \in \ZZ_{4k}^3$, such that $\mb{v}' \wp_4 = \mb{v}\wp_4 \nu_4$ and $\mb{v}' \wp_k = \mb{v}\wp_k \nu_k$. Define $\nu$ by setting $\mb{v} \nu = \mb{v}'$. Because $\nu_4$ and $\nu_k$ are involutions, $\nu$ itself is an involution, and thus a permutation of the vertex-set of $\G$. We claim that $\nu$ in fact preserves adjacency and is thus an automorphism of $\G$. 

To verify this we need to see that for each vertex $\mb{v}$ of $\G$ and each of its two neighbors of the form $\mb{v} + \mb{f}$, where $\mb{f}$ is as in (\ref{eq:adj2AT}), the vertices $\mb{v}\nu$ and $(\mb{v} + \mb{f})\nu$ are also adjacent. Now, $(\mb{v} + \mb{f})\nu\wp_4 = (\mb{v} + \mb{f})\wp_4\nu_4$ is adjacent to $\mb{v}\wp_4\nu_4$ (since $\wp_4\nu_4$ is a graph homomorphism), and is thus of the form $\mb{v}\wp_4\nu_4 + \mb{g} = \mb{v}\nu\wp_4 + \mb{g}$ for an appropriate $\mb{g} \in \{(1,\pm 1,0), (1,0,\pm 1), (-1,\pm 1, 0), (-1, 0, \pm 1)\}$. On the other hand, as $\wp_k$ and $\nu_k$ are both additive, we have that $(\mb{v} + \mb{f})\nu\wp_k = (\mb{v} + \mb{f})\wp_k\nu_k = \mb{v}\wp_k\nu_k + \mb{f}\wp_k\nu_k = \mb{v}\nu\wp_k + \mb{f}\wp_k\nu_k$. Therefore, $(\mb{v} + \mb{f})\nu = \mb{v}\nu + \mb{f}'$, where $\mb{f'} \in \ZZ_{4k}^3$ is the unique element such that $\mb{f'}\wp_4 = \mb{g}$ and $\mb{f'}\wp_k = \mb{f}\wp_k\nu_k$. 
This makes the verification that $\nu$ is an automorphism of $\G$ fairly straightforward. For each of the $16$ possibilities for $\mb{v}\wp_4$ and each of the two corresponding $\mb{f}$ from (\ref{eq:adj2AT}) we first calculate the above $\mb{g}$ and $\mb{f}\wp_k\nu_k$ and then use (\ref{eq:tab_4q}) to determine $\mb{f}'$. Based on the first coordinates of $\mb{v}\nu\wp_4$ and $(\mb{v}+\mb{f})\nu\wp_4$ we then confirm that $\mb{v}\nu$ and $(\mb{v} + \mb{f})\nu = \mb{v}\nu + \mb{f}'$ are indeed adjacent. To expedite the verifications we provide the following table in which for each of the four possibilities for $x\ \mathrm{mod}\ 4$, where $\mb{v} = (x,y,z)$, and each of the two corresponding possibilities for $\mb{f}$ from (\ref{eq:adj2AT}) the values of $\mb{f}\wp_4$, $\mb{f}\wp_k$ and $\mb{f}\wp_k\nu_k$ are given.
\begin{center}
\begin{tabular} { |c|c|c|c|c|}
\hline
$x$ mod 4&$\mb{f}$& $\mb{f}\wp_4$ & $\mb{f}\wp_k$ & $\mb{f}\wp_k\nu_k$\\
\hline
\multirow{2}{*}{0} &  $ (1,1,0) $   &  $ (1,1,0) $ &  $ (1,1,0) $ &  $ (1,1,0) $  \\
                            &  $ (1,-1,0) $  &  $ (1,3,0) $  &  $ (1,-1,0) $  &  $ (-1,0,q) $\\
\hline
\multirow{2}{*}{1} &  $ (1,0,r) $  &  $ (1,0,3) $ &  $ (1,0,q) $ &  $ (1,0,q) $  \\
                            &  $ (1,0,-r) $  &  $ (1,0,1) $  &  $ (1,0,-q) $  &  $ (-1,1,0) $ \\
\hline
\multirow{2}{*}{2} &  $ (1,r^2,0) $   &  $ (1,1,0) $ &  $ (1,-1,0) $ &  $ (-1,0,q) $  \\
                            &  $ (1,-r^2,0) $  &  $ (1,3,0) $  &  $ (1,1,0) $  &  $ (1,1,0) $\\
\hline
\multirow{2}{*}{3} &  $ (1,0,r^3) $  &  $ (1,0,3) $ &  $ (1,0,-q) $ &  $ (-1,1,0) $  \\
                            &  $ (1,0,-r^3) $  &  $ (1,0,1) $  &  $ (1,0,q) $  &  $ (1,0,q) $ \\
\hline
\end{tabular}
\medskip
\end{center} 

We provide two of the $32$ cases that need to be verified and leave the remaining ones to the reader. Suppose first that $\mb{v}$ is a vertex of $\G$ such that $\mb{v}\wp_4 = (1,3,0)$ and let us consider the possibility $\mb{f} = (1,0,r)$. Then $\mb{v}\nu\wp_4 = \mb{v}\wp_4\nu_4 = (3,0,1)$, whose first coordinate is $3$. Similarly, $(\mb{v} + \mb{f})\nu\wp_4 = (\mb{v}\wp_4 + \mb{f}\wp_4)\nu_4 = (2,3,3)\nu_4 = (0,0,2)$, whose first coordinate is $0$. As $(0,0,2) = (3,0,1) + (1,0,1)$, we thus have $\mb{g} = (1,0,1)$. Since $\mb{f}\wp_k\nu_k = (1,0,q)$, (\ref{eq:tab_4q}) thus implies that $\mb{f}' = (1,0,-r^3)$, which by (\ref{eq:adj2AT}) confirms that $\mb{v}\nu$ is indeed adjacent to $(\mb{v} + \mb{f})\nu$. 

Similarly, if $\mb{v}\wp_4 = (2,3,1)$ and $\mb{f} = (1,r^2,0)$, then $\mb{v}\nu\wp_4 = (2,3,1)$ and $(\mb{v} + \mb{f})\nu\wp_4 = (3,0,1)\nu_4 = (1,3,0)$, and so $\mb{g} = (-1,0,-1)$. Since $\mb{f}\wp_k\nu_k = (-1,0,q)$, (\ref{eq:tab_4q}) implies that $\mb{f}' = (-1,0,r)$. Since the first coordinates of $\mb{v}\nu\wp_4$ and $(\mb{v} + \mb{f})\nu\wp_4$ are $2$ and $1$, respectively, (\ref{eq:adj2AT}) confirms that $\mb{v}\nu$ is indeed adjacent to $(\mb{v} + \mb{f})\nu$.

It is now also easy to verify that $\nu$ fixes both $(0,0,0)$ and $(1,1,0)$ and maps $(1,-1,0)$ to $(-1,0,-r^3)$, which thus shows that $\G$ and $\Gmall$ are both $2$-arc-transitive.
\end{proof}

One of the main goals of the next three sections is to show that the examples from Proposition~\ref{pro:2AT} are in fact the only $2$-arc-transitive $\CPM$ graphs.

\section{Short cycles and $2$-paths}
\label{sec:cycles2paths}

In the previous two sections our main goal was to prove that each $\CPM$ graph admits enough automorphisms to make it vertex- and edge-transitive (Proposition~\ref{pro:HATgroup}) and that in certain cases additional automorphisms making it arc-transitive or even $2$-arc-transitive exist (Proposition~\ref{pro:AT} and Proposition~\ref{pro:2AT}). From now on our main goal will be to obtain results that limit the degree of symmetry of $\CPM$ graphs and eventually lead to determination of the whole automorphism group of these graphs (Theorem~\ref{the:Aut2AT}, Theorem~\ref{the:HAT} and Corollary~\ref{cor:Aut_AT}).

Throughout this section we let $m, s, n$ be positive integers with $s \geq 2$ and $n \geq 3$, and we let $r \in \ZZ_n^*$ be such that $r^{ms} = \pm 1$. Furthermore, we let $\G = \CPM(m,s,n;r)$ and we let $G$ be the half-arc-transitive subgroup of $\Aut(\G)$ from Proposition~\ref{pro:HATgroup}.

Our method for the investigation of automorphisms of $\G$ is based on the ideas first used in~\cite{Mar98} and then successfully adapted for other similar situations (see for instance~\cite{RamSpa17, Spa08, Spa09}). The key idea is to investigate the possible orbits of $2$-paths of $\G$ under the action of $\Aut(\G)$ (or its subgroups) and their interplay with short cycles of $\G$. The obtained information about the local structure of the graph can then be used to determine the situations in which ``unexpected'' automorphisms of the graph in question may exist.
\medskip

We first introduce the terminology pertaining to $2$-paths of $\G$. We point out that for us a $2$-path does not have a direction (even though we will be giving them as ordered triples of vertices), that is, we consider a {\em $2$-path} simply as a corresponding subgraph of $\G$. If we want to speak of $2$-paths with orientation, we will refer to them as {\em $2$-arcs}. The key observation concerning the set of all $2$-paths of $\G$ is that, provided that $ms > 2$, we can partition it into two subsets depending on whether the two endvertices of the $2$-path both belong to the same set $V_i$ from \eqref{eq:V_i} or not. In accordance with the terminology from~\cite{Mar98} the $2$-paths of the first kind (where its endvertices belong to the same set $V_i$) will be called {\em anchors}, while the $2$-paths of the second kind will be called {\em non-anchors}. By Proposition~\ref{pro:HATgroup} the sets $V_i$ are blocks of imprimitivity for the action of the group $G$, and so the elements of $G$ map anchors to anchors and non-anchors to non-anchors. Moreover, since $s \geq 2$, the remark following Proposition~\ref{pro:HATgroup} implies that the set of all non-anchors of $\G$ is a $G$-orbit. On the other hand, the set of all anchors is composed of two $G$-orbits, one containing all anchors with mid-vertex in some $V_i$ and the remaining two vertices in $V_{i+1}$ (such anchors will be called {\em negative anchors}), and the other containing all anchors with mid-vertex in some $V_i$ and the remaining two vertices in $V_{i-1}$ (such anchors will be called {\em positive anchors}). On Figure~\ref{fig:generic} the non-anchor $(\laa 0;(1,2)\raa, \laa 1;(0,2)\raa, \laa 2;(0,4)\raa )$, the negative anchor $(\laa 1;(1,3)\raa, \laa 0;(2,3)\raa, \laa 1;(3,3)\raa)$ and the positive anchor $(\laa 0;(0,1)\raa, \laa 1;(1,1)\raa, \laa 0;(2,1)\raa )$ in the part of the graph $\CPM(3,2,5;2)$ induced on $V_0 \cup V_1 \cup V_2$ are indicated.

Using the above distinction between anchors and non-anchors of $\G$ a sequence of length $d$ can be assigned to each walk $W$ of length $d+1$ in $\G$, where we assign a symbol $a$ or $n$ to each internal vertex of $W$, depending on whether the corresponding $2$-subpath of $W$ with this mid-vertex is an anchor or a non-anchor, respectively. (Even though we use the same symbol $n$ as one of the defining parameters of the graphs $\CPM(m,s,n;r)$ as well as to represent a non-anchor, this should cause no confusion.) We call the obtained sequence the {\em code} of $W$.
\medskip

Of course, if $ms = 2$, then we cannot distinguish between anchors and non-anchors of $\G$. We therefore first consider this exceptional situation.

\begin{lemma}
\label{le:ms=2}
Let $n \geq 3$ be an integer and let $r \in \ZZ_n^*$ be such that $r^2 = \pm 1$. Then the graph $\G = \CPM(1,2,n;r)$ is arc-transitive. If $n = 4$, then $\G$ is isomorphic to the $4$-cube which is $2$-arc-transitive and the vertex-stabilizers in $\Aut(\G)$ are of order $24$. Otherwise $\G$ is not $2$-arc-transitive and the vertex-stabilizers in $\Aut(\G)$ are of order $8$.
\end{lemma}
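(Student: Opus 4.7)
The plan is to treat the three assertions using the half-arc-transitive subgroup $G\le\Aut(\G)$ from Proposition~\ref{pro:HATgroup} and the arc-reversing involution $\etaall$ from Proposition~\ref{pro:AT} as the main tools. Arc-transitivity is immediate from Proposition~\ref{pro:AT} with $s=2$: since $r^2=\pm 1$ we have $r^{2s}=r^4=1$, hence $2(r^{2s}\pm 1)=0$. A direct check using \eqref{eq:eta} shows that $\etaall$ fixes $v:=\laa 0;(0,0)\raa$, and $\etaall\notin G$ because it reverses the $G$-induced edge orientation; hence $\Aut(\G)_v$ contains the disjoint union $G_v\cup \etaall G_v$, giving $|\Aut(\G)_v|\ge 8$.

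If $n=4$ then the only admissible $r\in\ZZ_4^*$ are $1$ and $3$; by Lemma~\ref{le:iso} both yield the same graph, so we may take $r=1$. Here the plan is a direct identification $\CPM(1,2,4;1)\cong Q_4$, for instance by exhibiting a bijection of $V(\G)$ with $\ZZ_2^4$ sending adjacency to Hamming distance~$1$. The $2$-arc-transitivity of $Q_4$ and its vertex stabilizer $S_4$ of order~$24$ are then standard.

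Assume $n\ne 4$. The strategy is a local $4$-cycle analysis. Label the four neighbors of $v$ by
$$
a_\pm := \laa 1;(\pm 1,0)\raa, \qquad b_\pm := \laa 1;(0,\pm r)\raa;
$$
then $\{a_\pm\}$ and $\{b_\pm\}$ are the $G$-out- and $G$-in-neighbors of $v$ respectively. A direct common-neighbor computation shows that for $n\ne 4$ the only $4$-cycles through $v$ are
$$
v \sim a_\varepsilon \sim \laa 0;(\varepsilon,\delta r)\raa \sim b_\delta \sim v, \qquad \varepsilon,\delta\in\{+,-\},
$$
so no $4$-cycle through $v$ pairs $a_+$ with $a_-$ or $b_+$ with $b_-$. (The case $n=4$ is exceptional precisely because $\pm 2$ coincide in $\ZZ_4$, creating extra $4$-cycles that do pair these.) Consequently any $\phi\in\Aut(\G)_v$ preserves the partition $\{\{a_+,a_-\},\{b_+,b_-\}\}$ of $N(v)$, and its induced action embeds into the dihedral group $D_4$ of order~$8$. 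Conversely, a short calculation using \eqref{eq:tau} and \eqref{eq:eta} shows that $\tauall_0,\tauall_1,\etaall$ act on $N(v)$ as $(a_+\,a_-)$, $(b_+\,b_-)$ and a double transposition swapping the two pairs, together generating all of $D_4$; so the image of $\Aut(\G)_v$ in $\S(N(v))$ is exactly $D_4$.

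The main obstacle will be showing that this action is faithful. The plan is a propagation argument: suppose $\phi\in\Aut(\G)_v$ fixes $v$ and all four of $a_\pm,b_\pm$. For each $\varepsilon,\delta\in\{+,-\}$ the vertex $x_{\varepsilon\delta}:=\laa 0;(\varepsilon,\delta r)\raa$ is the unique common neighbor of $a_\varepsilon$ and $b_\delta$ distinct from $v$ (easily verified when $n\ne 4$), so $\phi$ fixes each $x_{\varepsilon\delta}$. Now apply the same partition argument at $x_{\varepsilon\delta}$: the neighbors $a_\varepsilon$ and $b_\delta$ of $x_{\varepsilon\delta}$ lie in distinct parts of the analogous partition of $N(x_{\varepsilon\delta})$ (as one is a $G$-in- and the other a $G$-out-neighbor of $x_{\varepsilon\delta}$) and both are fixed by $\phi$, which must therefore fix the remaining two neighbors of $x_{\varepsilon\delta}$ as well. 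Iterating this propagation along the connected graph $\G$ yields $\phi=\mathrm{id}$, so $|\Aut(\G)_v|=8$. Since $D_4$ is not $2$-transitive on four points, $\G$ is not $2$-arc-transitive, completing the proof.
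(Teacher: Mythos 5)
Your proposal is correct and follows essentially the same route as the paper's proof: arc-transitivity from Proposition~\ref{pro:AT}, the $n=4$ case via identification with the $4$-cube, and for $n\neq 4$ a local analysis of the $4$-cycles through a vertex together with a propagation argument over the connected graph showing the vertex-stabilizer acts faithfully on the neighborhood, which yields order $8$ and rules out $2$-arc-transitivity. The differences are only cosmetic: the paper first normalizes to $r=1$ via Proposition~\ref{pro:iso2} and phrases the local structure as counts of $4$-cycles through $2$-arcs rather than in your partition-preservation/$D_4$ language.
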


\begin{proof}
By Proposition~\ref{pro:iso2} we can assume that $r = 1$. The claim about $\CPM(1,2,4;1) \cong \CPM(2,2,4;1)$ (see Lemma~\ref{le:discon}) follows from Proposition~\ref{pro:2AT}. For the rest of the proof we thus assume $n \neq 4$. Arc-transitivity of $\G$ follows from Proposition~\ref{pro:AT}. Since Proposition~\ref{pro:HATgroup} implies that the pointwise stabilizer of an arc is of order at least $2$, it thus suffices to prove that $\G$ is not $2$-arc-transitive and that the only automorphism of $\G$ fixing the vertex $\lea 0; (0,0)\ria$ and all of its neighbors is the identity. 

Consider the arc $P = (\lea 0;(0,0)\ria,\lea 1;(1,0)\ria)$. It is easy to verify that there is a unique $4$-cycle of $\G$ through each of $P\cdot (\lea 0;(1,1)\ria)$ and $P \cdot (\lea 0;(1,-1)\ria )$ (with $\lea 1;(0,1)\ria$ and $\lea 1;(0,-1)\ria$, respectively, being the corresponding remaining vertex of the $4$-cycle), while there is no $4$-cycle through $P \cdot (\lea 0;(2,0)\ria )$. This proves that $\G$ is not $2$-arc-transitive. Moreover, it shows that for each arc $(x,y)$ of $\G$ there is a unique neighbor $z_1$ of $y$, different from $x$, such that there is no $4$-cycle of $\G$ through $(x,y,z_1)$, while for each of the remaining two neighbors $z_i$ of $y$, $i \in \{2,3\}$, there is a unique neighbor $w_i$ of $x$, different from $y$, such that $(w_i,x,y,z_i)$ is a $4$-cycle. This shows that each automorphism $\alpha$ of $\G$ fixing $\lea 0; (0,0) \ria$ and all of its neighbors must also fix all neighbors of each of the neighbors of $\lea 0; (0,0) \ria$. Since $\G$ is connected, this shows that $\alpha$ is the identity.
\end{proof}

\begin{remark}
We point out that the graphs $\CPM(1,2,n;r)$ from Lemma~\ref{le:ms=2} all have the property that each of their edges lies on at least two $4$-cycles, and so by \cite[Theorem~3.3]{PotWil07} these graphs are all skeletons of edge-transitive toroidal maps of type $\{4,4\}$. More precisely, they are of type $\{4,4\}_{n,0}$ if $n$ is even and of type $\{4,4\}_{n,n}$ if $n$ is odd.
\end{remark}

Like the examples with $ms = 2$, the $\CPM$ graphs with the smallest possible even $n$, namely $n = 4$, also prove to be somewhat exceptional, which is why we will analyze them separately. In fact, as we show in the following proposition, all the corresponding $\CPM$ graphs are the well-known Praeger-Xu graphs, which thus also confirms a conjecture from~\cite[Section~15]{PotWil??}. To be able to state the corresponding result we recall the definition of these graphs and the result determining their automorphism group (but see~\cite{GarPra94, JajPotWil19, PraXu89} for more details). For integers $t \geq 3$ and $s \geq 1$ the Praeger-Xu graph $\mathrm{PX}(t,s)$ is the tetravalent graph whose vertex-set consists of all pairs $(i, \mb{v})$, where $i \in \ZZ_t$ and $\mb{v} \in \ZZ_2^s$, and in which each $(i, (v_0,v_1,\ldots , v_{s-1}))$ is adjacent to each of $(i+1, (v_1, v_2, \ldots , v_{s-1}, 0))$ and $(i+1, (v_1, v_2, \ldots , v_{s-1}, 1))$. It was shown in~\cite{PraXu89} that the graph $\mathrm{PX}(t,s)$ is arc-transitive if and only if $t > s$, and that in this case the vertex-stabilizers in the full automorphism group of the graph are of order $2^{t+1-s}$, unless $t = 4$ and $s \leq 3$. 

Recall that $\CPM(m,s,4;1) = \CPM(m,s,4;-1)$ and that by Lemma~\ref{le:discon} we have that $\CPM(m,s,4;r) \cong \CPM(2m,s,4;r)$ whenever $m$ is odd. We thus lose no generality by assuming $r = 1$ and that $m$ is even in the following proposition.

\begin{proposition}
\label{pro:n=4}
Let $m, s$ be positive integers with $m$ even and $s \geq 2$. Then the graph $\G = \CPM(m,s,4;1)$ is isomorphic to the Praeger-Xu graph $\mathrm{PX}(ms,s)$. If $m = s = 2$, then $\G$ is isomorphic to the $4$-cube which is $2$-arc-transitive and the vertex-stabilizers in $\Aut(\G)$ are of order $24$. Otherwise $\G$ is not $2$-arc-transitive and the vertex-stabilizers in $\Aut(\G)$ are of order $2^{s(m-1)+1}$.
\end{proposition}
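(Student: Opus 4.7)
The plan is to exhibit an explicit isomorphism $\phi\colon\CPM(m,s,4;1)\to\mathrm{PX}(ms,s)$ and then invoke the Praeger--Xu classification recalled just above. For each $v_k\in\ZZ_4$ write $a_k=\lfloor v_k/2\rfloor\in\{0,1\}$ for the high bit of $v_k$ (regarding $\ZZ_4=\{0,1,2,3\}$), and define $\phi$ by
$$
\phi(\laa i;(v_0,\dots,v_{s-1})\raa)\;=\;\bigl(i,(a_i,a_{i+1},\dots,a_{i+s-1})\bigr),
$$
where indices on the right are taken modulo $s$. By the remark following Lemma~\ref{le:discon}, the parities of the components of $\mb{v}$ are constant on each $V_i$, so the high bits $(a_0,\dots,a_{s-1})\in\ZZ_2^s$ determine $\mb{v}$ once $i$ is fixed. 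Since $|V_i|=2^s$, this shows that $\phi$ restricts to a bijection $V_i\to\{i\}\times\ZZ_2^s$, and consequently $\phi$ is a vertex-set bijection.

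Next I would verify that $\phi$ preserves adjacency. An edge has the form $\laa i;\mb{v}\raa\sim\laa i+1;\mb{v}\pm\mb{e}_\ell\raa$ with $\ell=i\bmod s$, so only the $\ell$-th coordinate changes, by $\pm 1$ in $\ZZ_4$. The parity of $v_\ell$ is flipped, and a direct case analysis according to whether $v_\ell$ is even or odd and whether the sign is $+$ or $-$ shows that the new high bit $a'_\ell$ takes each value in $\{0,1\}$ for exactly one choice of sign. Now, shifting the window of indices from $\{i,\dots,i+s-1\}$ to $\{i+1,\dots,i+s\}$ (modulo $s$) rewrites the $\phi$-image of the endpoint $\laa i+1;\mb{v}\pm\mb{e}_\ell\raa$ as $\bigl(i+1,(a_{i+1},\dots,a_{i+s-1},a'_\ell)\bigr)$, which is exactly the Praeger--Xu successor $(i+1,(w_1,\dots,w_{s-1},b))$ of $\phi(\laa i;\mb{v}\raa)$ with the free bit $b=a'_\ell$. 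Thus edges are preserved and, counting, all edges incident to $\laa i;\mb{v}\raa$ are accounted for, so $\phi$ is an isomorphism.

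Finally, I would invoke the stated Praeger--Xu result. The assumption $m\geq 2$ gives $ms>s$, so $\mathrm{PX}(ms,s)$ is arc-transitive. The only triple with $m$ even, $s\geq 2$ and $ms=4$ is $(m,s)=(2,2)$, in which case $\mathrm{PX}(4,2)$ is the $4$-cube, which is $2$-arc-transitive with vertex-stabilizer of order $24$ (also recorded in the proof of Proposition~\ref{pro:2AT}). In every other case we are outside the exceptional range $t=4,\,s\leq 3$, so the vertex-stabilizer has order $2^{ms+1-s}=2^{s(m-1)+1}$. Since this is a power of $2$, the arc-stabilizer has order $2^{s(m-1)-1}$, which is not divisible by $3$; it therefore cannot act transitively on the three non-returning neighbors at the head of an arc, so $\G$ is not $2$-arc-transitive. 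The only genuinely technical step is the parity/high-bit bookkeeping for the adjacency check, but it reduces to two or three lines of arithmetic in $\ZZ_4$.
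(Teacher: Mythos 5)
Your proof is correct and follows essentially the same route as the paper: your high-bit map with the window starting at $\ell = i \bmod s$ is exactly the paper's map $\lea i;\mb{v}\ria \mapsto (i,(v_\ell^*,\ldots,v_{s-1}^*,v_0^*,\ldots,v_{\ell-1}^*))$, with bijectivity justified by the same parity observation and the symmetry claims read off from the quoted Praeger--Xu result. The only difference is cosmetic: you spell out the divisibility argument (a $2$-power stabilizer cannot act $2$-transitively on four neighbors) where the paper simply defers the remaining claims to \cite[Theorem~2.13]{PraXu89}.
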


\begin{proof}
Recall that, since $m$ is even, $\G$ is of order $ms2^s$ and for each $\lea i; \mb{v} \ria, \lea i ; \mb{v}' \ria \in V_i$, where $V_i$ is as in \eqref{eq:V_i}, the components $v_j, v'_j$ are of the same parity for each $0 \leq j \leq s-1$. 

To each $a \in \ZZ_4$ we assign the element $a^* \in \ZZ_2$ in such a way that $2^* = 3^* = 1$ and $0^* = 1^* = 0$. Consider now the map $\Psi \colon \G \to \mathrm{PX}(ms,s)$, defined by the rule
$$
	\lea i;\mb{v}\ria \Psi = (i, (v_\ell^*, v_{\ell + 1}^*, \ldots, v_{s-1}^*, v_0^*, v_1^*, \ldots , v_{\ell-1}^*)),
$$
where $\ell$ is $i$ mod $s$. The comment from the beginning of the proof ensures that the map $\Psi$ is bijective. To see that it also preserves adjacency simply observe that the two neighbors $\lea i+1;\mb{v} \pm \mb{e_\ell}\ria$ of $\lea i;\mb{v}\ria$ in $V_{i+1}$ are mapped by $\Psi$ to 
$$
	(i+1, (v_{\ell + 1}^*, \ldots, v_{s-1}^*, v_0^*, \ldots , v_{\ell-1}^*, v_{\ell}^*))\quad \text{and}\quad 
	(i+1, (v_{\ell + 1}^*, \ldots, v_{s-1}^*, v_0^*, \ldots , v_{\ell-1}^*, v_{\ell}^*+1)).
$$
The remaining claims of the proposition follow from~\cite[Theorem~2.13]{PraXu89}.
\end{proof}

In view of Lemma~\ref{le:ms=2} and Proposition~\ref{pro:n=4} we can now restrict to the $\CPM$ graphs with $ms \geq 3$ and $n \neq 4$. Note that this implies that we have at least three sets $V_i$ in $\G$, and so we have anchors as well as non-anchors in $\G$. For ease of reference in the rest of the paper we record our assumptions on the parameters $m$, $s$, $n$ and $r$.

\begin{assumption}
\label{as:1}
We let $m, s, n$ be positive integers where $n \geq 3$, $n \neq 4$, $s \geq 2$ and $ms \geq 3$, and we let $r \in \ZZ_n^*$ be such that $r^{ms} = \pm 1$.
\end{assumption}

The following simple but useful result shows why $2$-paths are important in the investigation of symmetry properties of the $\CPM$ graphs.

\begin{proposition}
\label{pro:blocks}
Let $m, s, n$ and $r$ be as in Assumption~\ref{as:1} and let $\G = \CPM(m,s,n;r)$. Then the sets $V_i$ from \eqref{eq:V_i} are blocks of imprimitivity for the action of $\Aut(\G)$ on $\G$ if and only if the set of non-anchors of $\G$ is an $\Aut(\G)$-orbit, which happens if and only if $\G$ is not $2$-arc-transitive. In addition, if $\G$ is $2$-arc-transitive, then for each vertex $x$ of $\G$ the restriction of the action of the stabilizer $(\Aut(\G))_x$ to the set of four neighbors of $x$ is permutation isomorphic to $S_4$.
\end{proposition}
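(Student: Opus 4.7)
The plan is to recast all three equivalent conditions in terms of the induced action of the point stabiliser $\Aut(\G)_x$ on the four neighbours of $x$. Write $N(x) = \{y_1, y_2, y_3, y_4\}$, where, for $x \in V_i$, we take $\{y_1, y_2\} = N(x) \cap V_{i+1}$ and $\{y_3, y_4\} = N(x) \cap V_{i-1}$. The crucial observation is that the partition $\mathcal{P}_x = \{\{y_1, y_2\}, \{y_3, y_4\}\}$ of $N(x)$ is exactly the partition whose two parts are the anchor-pairs at $x$: two neighbours $y, y'$ of $x$ lie in the same part of $\mathcal{P}_x$ if and only if the $2$-path $(y, x, y')$ is an anchor of $\G$. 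Proposition~\ref{pro:HATgroup} then says that $G_x \cong \ZZ_2^s$ induces on $N(x)$ the Klein four-group generated by the transpositions swapping $\{y_1, y_2\}$ and swapping $\{y_3, y_4\}$; this is exactly the pointwise stabiliser of the parts of $\mathcal{P}_x$ in $\mathrm{Sym}(N(x)) \cong S_4$.

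Next I would observe that the subgroups of $S_4$ containing this Klein four-group are just three in number: the group itself, the order-$8$ dihedral group that setwise stabilises $\mathcal{P}_x$, and all of $S_4$. Only $S_4$ fails to preserve $\mathcal{P}_x$, and only $S_4$ acts $2$-transitively on $N(x)$. Hence the induced action of $\Aut(\G)_x$ on $N(x)$ preserves $\mathcal{P}_x$ if and only if it is not $2$-transitive, if and only if $\G$ is not $2$-arc-transitive; and in the $2$-arc-transitive case the induced group is forced to be $S_4$, which is the ``in addition'' assertion. Since $\Aut(\G)$-preservation of the non-anchor set is the same as $\Aut(\G)_x$-preservation of $\mathcal{P}_x$ at every $x$, and since $G$ is already transitive on non-anchors (by the remark following Proposition~\ref{pro:HATgroup}), this also yields the equivalence ``non-anchors form an $\Aut(\G)$-orbit $\Leftrightarrow$ $\G$ is not $2$-arc-transitive''.

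It remains to connect these with the blockhood of the sets $V_i$. The forward direction, ``$V_i$ are blocks $\Rightarrow$ non-anchors form an $\Aut(\G)$-orbit'', is immediate: if $\Aut(\G)$ respects the partition $\{V_0, \dots, V_{ms-1}\}$, it preserves the property ``two vertices lie in a common $V_i$'', hence preserves anchor-ness. For the converse, assume $\Aut(\G)$ preserves anchor-ness and pick $\alpha \in \Aut(\G)_x$ with $x \in V_0$; I will show $\alpha(V_0) = V_0$. Since $\alpha$ preserves $\mathcal{P}_x$, it either fixes both parts or swaps them. In the ``fix'' case I argue by breadth-first propagation along edges of $\G$: at any vertex $y$ whose image $\alpha(y)$ has already been placed in the correct $V$-set, the previously-processed neighbour $x'$ of $y$ (whose image $\alpha(x')$ already lies in the correct $V$-set) pins down the alignment of $\mathcal{P}_y$ with $\mathcal{P}_{\alpha(y)}$, and so forces every remaining neighbour of $y$ to land in its expected $V$-set. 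Connectedness of $\G$ then yields $\alpha(V_j) = V_j$ for every $j$, and the ``swap'' case is symmetric, yielding $\alpha(V_j) = V_{-j}$ for every $j$. In either situation $V_0$ is $\Aut(\G)_x$-invariant; combining with the transitivity of $G$ on $V(\G)$ and the fact that $V_0$ is a $G$-block upgrades this to blockhood of each $V_i$ under the full group $\Aut(\G)$.

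The main obstacle I anticipate is the propagation step: one must track, at every inductive stage, that the ``aligned versus swapped'' choice made at $x$ remains consistent, which requires a carefully chosen witness neighbour at each stage and uses global (rather than merely local) preservation of anchor-ness. Once this bookkeeping is set up, the rest of the proof is a short verification about the intermediate subgroups of $S_4$ above the Klein four-group, combined with routine orbit-stabiliser manipulations involving $G$.
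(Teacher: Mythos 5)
Your proposal is correct, and its route overlaps with the paper's only in part, so a comparison is worthwhile. For the second equivalence and the final claim you argue through the induced action of $\Aut(\G)_x$ on the four neighbours of $x$: since this action contains the two disjoint transpositions induced by the stabilizer in $G$ (isomorphic to $\ZZ_2^s$), and the only subgroups of $S_4$ above that Klein four-group are the Klein group itself, the dihedral group of order $8$ preserving the anchor-pairing $\mathcal{P}_x$, and $S_4$ itself, any failure to preserve the pairing forces local action $S_4$, hence (with vertex-transitivity) $2$-arc-transitivity; this settles the equivalence with $2$-arc-transitivity and the $S_4$ statement in one stroke. The paper instead works on $2$-paths: an automorphism sending a non-anchor to an anchor must send the complementary non-anchor to the complementary anchor, giving a single orbit on $2$-paths, after which the anchor-reversing elements of $G$ yield $2$-arc-transitivity, and the $S_4$ claim is obtained separately from the classification of $2$-transitive groups of degree $4$ together with the transpositions in the stabilizer; your packaging is a little more economical. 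For the blockhood equivalence, your breadth-first propagation is essentially the paper's walk-code argument in inductive form, except that the paper applies it directly to an arbitrary automorphism (the code of a walk joining two vertices of one $V_i$ determines the level displacement, so the image walk again joins vertices of a common $V_{i'}$), whereas you first reduce to elements of a vertex stabilizer and then upgrade to blockhood; this works, but it costs you two routine extra steps that should be recorded explicitly, namely that preservation of $\mathcal{P}_x$ by every vertex stabilizer implies global preservation of anchors (conjugate an arbitrary automorphism into a stabilizer by an element of $G$, which already preserves anchors), and that the setwise stabilizer of $V_0$ in $G$ is transitive on $V_0$ (from $G$-transitivity and $V_0$ being a $G$-block), which is what makes the upgrade from $\Aut(\G)_x$-invariance of $V_0$ to blockhood go through.
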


\begin{proof}
Recall that the two end-vertices of an anchor of $\G$ belong to the same set $V_i$, while for the non-anchors this does not hold. Therefore, if the sets $V_i$ are blocks of imprimitivity for $\Aut(\G)$, Proposition~\ref{pro:HATgroup} implies that the set of all non-anchors of $\G$ is an $\Aut(\G)$-orbit.

Conversely, suppose that the set of all non-anchors of $\G$ is an $\Aut(\G)$-orbit, let $i \in \ZZ_{ms}$ and consider $\laa i ; \mb{v}\raa, \laa i ; \mb{v'}\raa \in V_i$. Since $\G$ is connected, there exists a walk $W$ from $\laa i ; \mb{v}\raa$ to $\laa i ; \mb{v'}\raa$. Note that for each internal vertex $\laa i'' ; \mb{v''}\raa$ of $W$ the parameter $i''$ is completely determined by the information on whether the first vertex on $W$ after $\laa i ; \mb{v}\raa$ is in $V_{i-1}$ or in $V_{i+1}$, and by the code of the subwalk of $W$ from $\laa i;\mb{v}\raa$ to $\laa i'', \mb{v''}\raa$. Since any element of $\Aut(\G)$ maps anchors to anchors and non-anchors to non-anchors, it preserves codes of paths, and so it maps the walk $W$ to a walk whose endvertices belong to the same set $V_{i'}$. This shows that the sets $V_i$ are blocks of imprimitivity for the action of $\Aut(\G)$.

To prove the second equivalence observe that if $\G$ is $2$-arc-transitive, the set of non-anchors of $\G$ is clearly not an $\Aut(\G)$-orbit. For the converse, assume the set of non-anchors is not an $\Aut(\G)$-orbit and let us show that in this case $\G$ is $2$-arc-transitive. Consider the six $2$-paths with a given mid-vertex. Four of them are non-anchors (and are thus all in the same $\Aut(\G)$-orbit by the remark following Proposition~\ref{pro:HATgroup}), one of them is a positive anchor and the remaining one a negative anchor. An automorphism that maps a given non-anchor with mid-vertex $x$ to an anchor with mid-vertex $y$ has to map the complementary non-anchor with mid-vertex $x$ to the complementary anchor with mid-vertex $y$. This shows that $\Aut(\G)$ has just one orbit on the set of all $2$-paths of $\G$. Since the group $G$ from Proposition~\ref{pro:HATgroup} provides automorphisms reversing the $2$-arc corresponding to a given anchor, this shows that the graph $\G$ is in fact $2$-arc-transitive. 

The last part follows from the fact that the only two $2$-transitive permutation groups of degree $4$ are the alternating group $A_4$ and the symmetric group $S_4$. Namely, since $G$ contains automorphisms fixing a given vertex $x$ and interchanging two of its neighbors while fixing the remaining two, the restriction of $(\Aut(\G))_x$ to the set of four neighbors of $x$ cannot be permutation isomorphic to $A_4$. Therefore, if $\G$ is $2$-arc-transitive, this restriction is isomorphic to $S_4$. 
\end{proof}

Suppose $\G = \CPM(m,s,n;r)$ is $2$-arc-transitive. Then all $2$-paths of $\G$ are in the same $\Aut(\G)$-orbit, and so for any $k \geq 3$ the number of $k$-cycles through any anchor has to be equal to the number of $k$-cycles through any non-anchor. We make use of this observation by investigating short cycles of $\G$ and count the number of such cycles through an anchor and the number of such cycles through a non-anchor. We first show that each $\CPM$ graph with $ms \geq 3$ has certain $8$-cycles. Indeed,   
\begin{equation}
\label{eq:generic}
	(\laa 0 ; \mb{0}\raa, \laa 1 ; \mb{e_0}\raa, \laa 2 ; \mb{e_0} + r\mb{e_1}\raa, \laa 1 ; \mb{e_0} + 2r\mb{e_1}\raa, \laa 0 ; 2r\mb{e_1}\raa, \laa 1 ; -\mb{e_0} + 2r\mb{e_1}\raa, \laa 2 ; -\mb{e_0} + r\mb{e_1}\raa, \laa 1 ; -\mb{e_0}\raa )
\end{equation}
is an $8$-cycle of $\G$ (see Figure~\ref{fig:generic} where this cycle is depicted in a section of $\CPM(3,2,5;2)$). An $8$-cycle of $\G$ is said to be {\em generic} if it is in the $G$-orbit of the $8$-cycle from~\eqref{eq:generic}, where $G$ is as in Proposition~\ref{pro:HATgroup}.
\begin{figure}[h]
\begin{center}
	\includegraphics[scale=0.4]{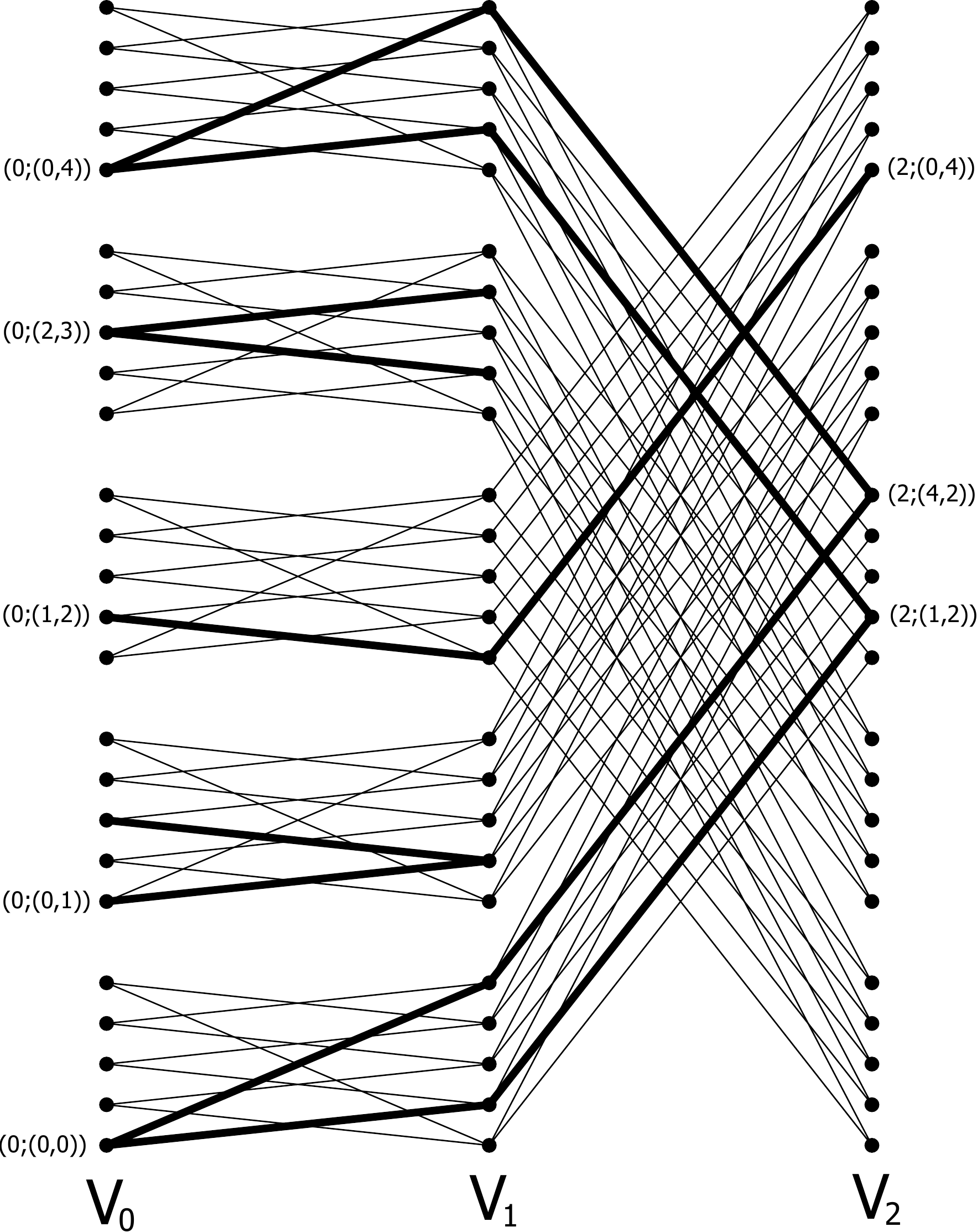}
	\caption{A generic $8$-cycle, two anchors and a non-anchor in the graph $\CPM(3,2,5;2)$.}
	\label{fig:generic}
\end{center}
\end{figure}
\medskip

Now, let $C$ be a cycle of length $k$ and consider any of the $2k$ closed walks of length $k$ corresponding to $C$. We can assign a code of length $k$ to it in the above described way (since the walk is closed we can also assign a corresponding symbol $a$ or $n$ to the initial vertex). We let the {\em trace} of $C$ be defined as the set of all codes assigned to $C$ in this way. In other words, the trace is the equivalence class of all sequences obtained from any of the corresponding codes by reflections and/or cyclic rotations. We denote the trace of $C$ by any of its members (that is, by any of the corresponding codes). The trace of the generic $8$-cycle from \eqref{eq:generic} is thus $anananan$ ($= nananana$). We follow the convention from~\cite{Mar98} of using exponential notation for the traces containing several consecutive identical symbols. For instance, instead of $aannnnnn$ we simply write $a^2n^6$. Of course, each element of $G$ preserves traces of cycles.
\medskip

Before embarking on a thorough analysis of all possible $8$-cycles of $\G$ (and/or other short cycles) we make a few straightforward but useful observations. For any pair of adjacent vertices $\laa i;\mb{v}\raa$ and $\laa i+1;\mb{v'}\raa$ the $s$-tuples $\mb{v}$ and $\mb{v'}$ differ in exactly one component (in fact in the $\ell$-th, where $\ell$ is $i$ mod $s$). Therefore, if we start at some vertex $\laa i, \mb{v}\raa$ of a given cycle $C$ of $\G$ and then traverse $C$ in one of the two possible directions, we change exactly one component of the corresponding $s$-tuple at each step. It is thus clear that the existence of a certain cycle in $\G$ imposes $s$ conditions (congruence equations modulo $n$) on the parameters $n$ and $r$ (some of which may be trivial), one for each of the $s$ components. To make arguments in which we use the above observation easier to explain we introduce the following additional terminology. Let $\laa i;\mb{v}\raa \laa i+1;\mb{v'}\raa$ be an edge of $\G$ where $\ell$ is $i$ mod $s$. Then this edge is said to have {\em label} $\ell$. Therefore, the label of the edge determines the component of the $s$-tuple which will change upon traversal of this edge. Moreover, if $\mb{v'} = \mb{v} + r^i\mb{e_\ell}$ then this edge is said to be {\em positive}, while if $\mb{v'} = \mb{v} - r^i\mb{e_\ell}$ it is said to be {\em negative}.

\begin{lemma}
\label{le:obser}
Let $m, s, n$ and $r$ be as in Assumption~\ref{as:1}. Let $C$ be any cycle of the graph $\G = \CPM(m,s,n;r)$. If we disregard the non-anchors then the positive and negative anchors alternate on $C$. In particular, $C$ has an even number of anchors. Moreover, for each label $\ell \in \{0,1,\ldots , s-1\}$ the cycle $C$ contains either zero or at least two edges of label $\ell$. In addition, if it contains just two edges of label $\ell$, these two edges cannot be consecutive on $C$. 
\end{lemma}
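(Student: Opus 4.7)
The plan hinges on two elementary observations that I would use throughout: (a) as one traverses $C$, the resulting sequence of level-indices $i \in \ZZ_{ms}$ is a closed walk on the $ms$-cycle that moves by $\pm 1$ at every step; and (b) an edge of label $\ell$ alters only the $\ell$-th component of the vector $\mb{v}$, and does so by $\pm r^i$ for some $i \equiv \ell \pmod s$. All three parts of the lemma are arithmetic consequences of these two facts.

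For the alternation claim I would assign a direction ``up'' or ``down'' to each edge of $C$ according to whether the corresponding step increases or decreases the level. A non-anchor is then precisely a mid-vertex where the direction is preserved; a positive anchor is an up-to-down reversal, while a negative anchor is a down-to-up reversal. Since $C$ is closed, the direction must flip an even number of times in total and consecutive flips are of opposite types, which yields the desired alternation between positive and negative anchors. The parity statement on the total number of anchors is then immediate.

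For the claim that the number of label-$\ell$ edges on $C$ is either $0$ or at least $2$: since $C$ closes up, the net change in each component of $\mb{v}$ vanishes modulo $n$. Only label-$\ell$ edges alter the $\ell$-th component, and each does so by $\pm r^i$ with $r^i \in \ZZ_n^*$; a single such contribution cannot be $0 \pmod n$, ruling out the case of exactly one label-$\ell$ edge.

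Finally, suppose $C$ has exactly two label-$\ell$ edges and that they are consecutive on $C$. Let their shared vertex lie at level $j$; the other two endpoints then lie at $j \pm 1$. Two consecutive edges with the same label must straddle the same pair of levels, since otherwise their labels would be $j-1$ and $j$ mod $s$, contradicting $s \geq 2$. Thus both edges lie between $V_j$ and a common $V_{j'}$, so the configuration forms an anchor with some $i \equiv \ell \pmod s$ being the relevant index. Simplicity of $C$ forbids a backtrack, so the two outer vertices of this anchor differ in the $\ell$-th component by $\pm 2 r^i$. The remainder of $C$ is a walk between these two outer vertices that uses no further label-$\ell$ edges and therefore cannot change the $\ell$-th component, forcing $2 r^i \equiv 0 \pmod n$. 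This contradicts $\gcd(r, n) = 1$ together with $n \geq 3$. The delicate point in the whole argument, and the one I expect to need the most care in writing out, is this final case analysis: identifying the anchor configuration as the only viable one given $s \geq 2$, and then correctly extracting the $\pm 2 r^i$ discrepancy using the fact that $C$ is a simple cycle.
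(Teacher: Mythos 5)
Your proposal is correct and follows essentially the same route as the paper's proof: alternation read off from the direction reversals at anchors, the zero-or-at-least-two claim from the net change in the $\ell$-th component being $0$ modulo $n$ with each contribution a unit, and the consecutive-edge case reduced (via $s \geq 2$) to an anchor forcing $2r^i \equiv 0 \pmod{n}$. The paper merely compresses these steps by invoking Proposition~\ref{pro:HATgroup} to normalize the label and base vertex, so the contradictions read $1 \equiv 0$ and $2 \equiv 0 \pmod{n}$; your direct use of $r^i \in \ZZ_n^*$ is an equivalent formulation.
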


\begin{proof}
The alternation of anchors is clear from the definition. Suppose now that the cycle $C$ contains just one edge of a given label. By Proposition~\ref{pro:HATgroup} we can assume this label is $0$. But then the remarks preceding the statement of this lemma imply that the condition for label $0$ is $1 \equiv 0 \pmod{n}$, which contradicts $n \geq 3$. Similarly, if $C$ contains just two edges of a given label and these edges are consecutive on $C$ then the fact that $s \geq 2$ implies that the corresponding $2$-path is an anchor, and so Proposition~\ref{pro:HATgroup} and the above remarks imply that $2 \equiv 0 \pmod{n}$, which also contradicts $n \geq 3$. 
\end{proof}

Let $C$ be a cycle of $\G$. By Lemma~\ref{le:obser} its trace has an even number of anchors, say $2t$. We assign a nonnegative integer, called the {\em disbalancedness}, to $C$ (and at the same time to its trace) as follows. If $t = 0$ then the disbalancedness is simply the length of $C$. Otherwise, write the trace as $an^{k_1}an^{k_2}\cdots an^{k_{2t}}$, where of course some of the $k_j$'s can be $0$. The corresponding disbalancedness is then the absolute value of the sum $\sum_{j=1}^{2t}(-1)^j k_j$. For instance, the disbalancedness of the generic $8$-cycles (which have trace $anananan$) is $0$, while the disbalancedness of trace $a^2n^6$ (and the corresponding $8$-cycles) is $6$. It is easy to verify that this parameter does not depend on the particular representative code of the trace of $C$, and is thus well defined. Finally, we say that a cycle is {\em coiled} if its disbalancedness is not $0$, and is {\em non-coiled} otherwise. 

\begin{lemma}
\label{le:coiled_ms}
Let $m, s, n$ and $r$ be as in Assumption~\ref{as:1} and let $\G = \CPM(m,s,n;r)$. Then for each coiled cycle $C$ of $\G$ its disbalancedness is a multiple of $ms$.
\end{lemma}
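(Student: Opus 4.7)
The plan is to exploit the fact that traversing the cycle returns us to our starting set $V_{i_0}$, so the net signed number of steps (each edge advancing the $V_i$-index by $\pm 1$) must vanish modulo $ms$.

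First, I would fix a traversal direction of the cycle $C$ and, for each edge $e$ of $C$, set $\epsilon(e) = +1$ if $e$ is traversed from some $V_i$ to $V_{i+1}$, and $\epsilon(e) = -1$ otherwise. By the very definition of anchor versus non-anchor, the sign $\epsilon$ is preserved across a non-anchor vertex and reversed across an anchor vertex. Hence if $C$ has $2t$ anchors (an even number by Lemma~\ref{le:obser}), then $\epsilon$ is constant on each of the $2t$ maximal arcs delimited by consecutive anchors, and its value alternates from one arc to the next.

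In the case $t = 0$, $\epsilon$ is constant on all of $C$, so the length $L$ of $C$ (which by definition is its disbalancedness) satisfies $L \equiv 0 \pmod{ms}$. For $t \geq 1$, writing the trace around $C$ as $a n^{k_1} a n^{k_2} \cdots a n^{k_{2t}}$, the $j$-th arc consists of exactly $k_j + 1$ edges, all of common sign $(-1)^j \delta$ for some fixed $\delta \in \{\pm 1\}$. The closing condition therefore reads
$$
\delta \sum_{j=1}^{2t} (-1)^j (k_j + 1) \equiv 0 \pmod{ms}.
$$
Since $2t$ is even, the ``$+1$'' contributions cancel, leaving $\sum_{j=1}^{2t} (-1)^j k_j \equiv 0 \pmod{ms}$, whose absolute value is by definition the disbalancedness of $C$.

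I do not foresee any serious obstacle here; the argument is essentially bookkeeping, and the key insight is that the sign-alternation pattern dictated by the trace exactly encodes the net index-displacement around the cycle, so the closing condition on this displacement translates verbatim into the required divisibility of the disbalancedness.
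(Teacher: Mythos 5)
Your proof is correct and follows essentially the same route as the paper's: the paper likewise notes that traversing $C$ from a vertex $x \in V_i$ back to $x$ moves the index from $i$ to $i \pm d$, where $d$ is the disbalancedness, forcing $d \equiv 0 \pmod{ms}$. Your write-up merely makes explicit the sign-bookkeeping (constancy of the step-sign across non-anchors, reversal at anchors, and the cancellation of the $2t$ ``$+1$'' terms) that the paper leaves as ``clearly''.
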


\begin{proof}
Let $C$ be a coiled cycle and let $d$ be its disbalancedness. Pick a vertex $x$ of $C$ and let $0 \leq i \leq ms-1$ be such that $x \in V_i$, where $V_i$ is as in \eqref{eq:V_i}. Then traversing $C$ all the way from $x$ back to $x$ in one of the two possible directions clearly takes us from $V_i$ to $V_{i+d}$ or to $V_{i-d}$. Therefore, $i \pm d \equiv i \pmod{ms}$, which completes the proof.
\end{proof}

\section{The $6$-cycles}
\label{sec:6cycles}

In this section we classify the $2$-arc-transitive $\CPM$ graphs possessing $6$-cycles. We will later show that these are in fact the only $2$-arc-transitive $\CPM$ graphs (see Theorem~\ref{the:2ATclass}). We first determine the possible traces of $6$-cycles in $\CPM$ graphs.

\begin{lemma}
\label{le:6traces}
Let $m, s, n$ and $r$ be as in Assumption~\ref{as:1}. Then the only possible traces of $6$-cycles of the graph $\CPM(m,s,n;r)$ are $a^6$, $an^2an^2$ and $n^6$. 
\end{lemma}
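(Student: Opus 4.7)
The plan is to enumerate all possible traces of a $6$-cycle of $\G = \CPM(m,s,n;r)$ and eliminate the unwanted ones using the disbalancedness, Lemma~\ref{le:obser}, and (in one stubborn subcase) a direct calculation of the cycle-closure equations. By Lemma~\ref{le:obser} the number of anchors on any cycle is even, so the trace of a $6$-cycle has $0, 2, 4,$ or $6$ anchors. A simple enumeration up to cyclic rotation and reflection yields exactly eight candidates, namely
\[
n^6,\ a^2n^4,\ anan^3,\ an^2an^2,\ a^4n^2,\ a^3nan,\ a^2na^2n,\ a^6,
\]
with respective disbalancedness $6, 4, 2, 0, 2, 0, 2, 0$. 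Since $ms \geq 3$, Lemma~\ref{le:coiled_ms} immediately rules out the three traces with disbalancedness $2$ (namely $anan^3$, $a^4n^2$ and $a^2na^2n$), while $a^2n^4$ can only survive if $ms = 4$ and $n^6$ only if $ms \in \{3,6\}$.

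It remains to rule out $a^3nan$ and $a^2n^4$. Both arguments hinge on the simple observation that at an anchor the two incident cycle edges carry the same label, while at a non-anchor they carry labels differing by $\pm 1$ modulo $s$ (immediate from the paragraph preceding Lemma~\ref{le:obser}). For the trace $a^3nan$ this rule determines, up to a cyclic shift of the label indexing, the label sequence $0,0,0,1,1,0$ around the cycle. In particular label $1$ is carried by exactly two, necessarily consecutive, edges, directly contradicting Lemma~\ref{le:obser}.

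For the trace $a^2n^4$ the same observation (together with $ms = 4$) gives, up to a cyclic shift, the label sequence $0,0,s-1,s-2,s-3,0$ modulo $s$. If $(m,s) = (1,4)$, the labels $1,2,3$ each appear on exactly one edge, again contradicting Lemma~\ref{le:obser}. In the remaining subcase $(m,s)=(2,2)$ the labels become $0,0,1,0,1,0$ and Lemma~\ref{le:obser} is silent, so one has to argue directly. Placing $v_0 = \laa 0;(0,0)\raa$ and unwinding the cycle step by step with signs $\delta_j \in \{\pm 1\}$ for each edge $e_j$, one obtains $v_1 = \laa 1;(\delta_0,0)\raa$, $v_2 = \laa 0;(2\delta_0,0)\raa$, and eventually finds that closure of the second coordinate forces $r^2 \equiv \pm 1 \pmod n$, while closure of the first coordinate reads $\delta_5 \equiv 2\delta_0 - \delta_3 r^2 \pmod n$. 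Combining $r^2 = \pm 1$ with $n \geq 3$ and $n \neq 4$ excludes the coincidences $\pm 3 \equiv \pm 1 \pmod n$, so the only admissible sign choices force $\delta_5 = \delta_0$, whence $v_5 = v_1$, contradicting the simplicity of the $6$-cycle.

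The main obstacle is exactly this last subcase: Lemma~\ref{le:obser} does not settle $(m,s)=(2,2)$, and the exclusion genuinely relies on the hypothesis $n \neq 4$. (It cannot be removed: dropping $n \neq 4$ one readily exhibits a $6$-cycle of trace $a^2n^4$ in $\CPM(2,2,4;1)$, which is just the $4$-cube.)
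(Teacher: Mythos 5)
Your proof is correct and follows essentially the same route as the paper: enumerate the eight candidate traces, eliminate the disbalancedness-$2$ ones via Lemma~\ref{le:coiled_ms}, eliminate $a^3nan$ and the $(m,s)=(1,4)$ subcase of $a^2n^4$ via Lemma~\ref{le:obser}, and settle the remaining $(m,s)=(2,2)$ subcase of $a^2n^4$ by the cycle-closure congruences. Your sign-tracking version of that last computation (showing the only admissible signs force $v_5=v_1$) is equivalent to the paper's normalized one, which derives the incompatible conditions $3\pm r^2=0$ and $r\pm r^3=0$ directly.
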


\begin{proof}
By Lemma~\ref{le:obser} any $6$-cycle has an even number of anchors, and so the potential traces for $6$-cycles in decreasing order on the number of anchors are $a^6$, $a^4n^2$, $a^3nan$, $a^2na^2n$, $a^2n^4$, $an^3an$, $an^2an^2$ and $n^6$. As $ms \geq 3$, Lemma~\ref{le:coiled_ms} excludes $a^4n^2$, $a^2na^2n$ and $an^3an$, while Lemma~\ref{le:obser} excludes $a^3nan$. To complete the proof suppose $6$-cycles of trace $a^2n^4$ exist and let $C$ be one of them. By Lemma~\ref{le:coiled_ms} it follows that $ms = 4$ and then Lemma~\ref{le:obser} forces $m = s = 2$. By Proposition~\ref{pro:HATgroup} we can assume that $C$ contains the path $(\laa 0;(0,0)\raa, \laa 1;(1,0)\raa, \laa 0;(2,0)\raa, \laa 1;(3,0)\raa, \laa 2;(3,r)\raa)$. The remaining vertex of $C$ is then of the form $\laa 3;(3 \pm r^2, r)\raa$, and so the conditions for the two labels are $3 \pm r^2 = 0$ and $r \pm r^3 = 0$, which clearly cannot both hold as $r \in \ZZ_n^*$ and $n \notin \{2,4\}$. 
\end{proof}

\begin{proposition}
\label{pro:6m=n}
Let $m, s, n$ and $r$ be as in Assumption~\ref{as:1}. If $\G = \CPM(m,s,n;r)$ possesses $6$-cycles and is $2$-arc-transitive, then $s = 2$ and $\G$ is isomorphic to a graph from Proposition~\ref{pro:2AT}.
\end{proposition}

\begin{proof}
Suppose $\G$ has no $6$-cycles of trace $an^2an^2$. By Proposition~\ref{pro:blocks} and Lemma~\ref{le:6traces} it thus follows that $\G$ possesses $6$-cycles of traces $a^6$ and $n^6$. For those of trace $a^6$ to exist $6 \equiv 0 \pmod{n}$ must hold, and so Lemma~\ref{le:discon} implies that we can assume $n = 3$. Therefore, since $6$-cycles of trace $an^2an^2$ do not exist, $s \neq 2$. On the other hand, as $6$-cycles of trace $n^6$ do exist, Lemma~\ref{le:coiled_ms} implies that $ms \in \{3,6\}$, and so Lemma~\ref{le:obser} implies that $s = 3$ and $m \in \{1,2\}$. It is easy to verify that in each of $\CPM(1,3,3;1)$ and $\CPM(2,3,3;1)$ every anchor lies on just one $6$-cycle while every non-anchor lies on more than one $6$-cycle, contradicting the fact that $\G$ is $2$-arc-transitive.

Therefore, $\G$ possesses $6$-cycles of trace $an^2an^2$. By Lemma~\ref{le:obser} we must have $s = 2$ (and thus $m \geq 2$) and $2(1\pm r^2) = 0$. Since $n \neq 4$, precisely one of $2(1+r^2) = 0$ and $2(1-r^2)=0$ holds. It is now easy to verify (but see the next two paragraphs) that there are precisely two $6$-cycles of trace $an^2an^2$ through each anchor and precisely two through each non-anchor. 

There are two possibilities concerning $6$-cycles of traces other than $an^2an^2$: such $6$-cycles might exist or not. If $\G$ possesses $6$-cycles of traces other than $an^2an^2$, the above remark about the number of $6$-cycles of trace $an^2an^2$ through each anchor and through each non-anchor implies that $\G$ has to possess $6$-cycles of each of the traces $a^6$ and $n^6$. Then $n \in \{3,6\}$ and $m = 3$ (as $s = 2$). Since $\CPM(3,2,6;1) \cong \CPM(6,2,3;1)$ has no coiled $6$-cycles, it is thus not $2$-arc-transitive. Proposition~\ref{pro:2AT} confirms that $\CPM(3,2,3;1)$ is.

For the rest of the proof we will assume the other possibility, that is that the only $6$-cycles of $\G$ are those of trace $an^2an^2$. Let $\delta \in \{-1,1\}$ be such that $2(1 + \delta r^2) = 0$. Regardless of whether $1 + \delta r^2 = 0$ or $1 + \delta r^2 = n/2$ (in which case $n$ must be divisible by $4$ for $r^2$ to be in $\ZZ_n^*$), we get that $r^4 = (\delta r^2)^2 = 1$. Consider now the $2$-arc $P_1 = (\laa 1;(1,0)\raa,\laa 0;(0,0)\raa, \laa 1;(-1,0)\raa )$. There is a $6$-cycle of $\G$ through $P_1\cdot (\laa 2;(-1,r)\raa )$ (it contains $\laa 3;(-1-\delta r^2, r)\raa$ and $\laa 2;(1,r)\raa$) and a $6$-cycle through $P_1\cdot (\laa 2;(-1,-r)\raa )$ (it contains $\laa 3;(-1-\delta r^2, -r)\raa$ and $\laa 2;(1,-r)\raa$), but there is no $6$-cycle of $\G$ through $P_1\cdot (\laa 0;(-2,0)\raa)$. Since $\G$ is $2$-arc-transitive, this shows that for each $2$-arc $(x,y,z)$ of $\G$ there is a unique neighbor $w$ of $z$, different from $y$, such that there is no $6$-cycle through $(x,y,z,w)$ in $\G$. We call $w$ the {\em successor} of $(x,y,z)$. For instance, the successor of the above $P_1$ is $\laa 0;(-2,0)\raa$, while letting $P_2 = (\laa 0;(0,0)\raa, \laa 1;(1,0)\raa, \laa 2;(1,r)\raa )$ the fact that there is a $6$-cycle through $P_2 \cdot (\laa 1;(1,2r)\raa )$ and $P_2 \cdot (\laa 3;(1 + \delta r^2, r)\raa )$, but none through $P_2 \cdot (\laa 3;(1 - \delta r^2, r)\raa )$, implies that the successor of $P_2$ is $\laa 3;(1 - \delta r^2, r)\raa$.

Starting at a given $2$-arc $P$ and extending it by always appending the successor of the current terminal $2$-arc, we obtain the {\em distinguished closed walk} at $P$. Since $\G$ is $2$-arc-transitive, all distinguished closed walks are in the same $\Aut(\G)$-orbit and are thus of the same length. The distinguished closed walk at $P_1$ is clearly the corresponding $G$-alternating cycle (where $G$ is as in Proposition~\ref{pro:HATgroup}), and is thus of length $2n$ or $n$, depending on whether $n$ is odd or even, respectively. We now consider the distinguished closed walk $W$ at $P_2$. The next two vertices on it are $\laa 3;(1-\delta r^2, r)\raa $ and $\laa 4;(1-\delta r^2, r(1-\delta r^2))\raa $. We have two cases to consider. \smallskip

\noindent
{\sc Case 1:} $n$ is odd.\\
In this case $1 + \delta r^2 = 0$, and so Proposition~\ref{pro:iso2} implies that we can assume $r = 1$ (and thus $\delta = -1$). The above two vertices of $W$ are thus $\laa 3;(2,1)\raa$ and $\laa 4;(2,2)\raa$. The length of $W$ is therefore $\mathrm{lcm}(2n,2m) = 2\mathrm{lcm}(n,m)$. For this to be $2n$ we require that $m \mid n$. Since $\G$ is $2$-arc-transitive, Proposition~\ref{pro:blocks} implies that there exists some $\alpha \in \Aut(\G)$, fixing each of $\laa 0;(0,0)\raa$, $\laa 1;(1,0)\raa$ and $\laa -1; (0,-1)\raa$, while interchanging $\laa -1;(0,1)\raa$ with $\laa 1;(-1,0)\raa$. Then $\alpha$ fixes $W$ pointwise and thus also the $(2m)$-th vertex on $W$ (after $\laa 0;(0,0)\raa$), which is $\laa 0;(m,m)\raa$. Consider now the distinguished closed walk $W'$ at $(\laa 0;(0,0)\raa, \laa 2m-1;(0,1)\raa, \laa 2m-2;(1,1)\raa )$. As above we find that the $(2m)$-th vertex on it is $\laa 0;(m,m)\raa$ which we already know is left fixed by $\alpha$. But since $\alpha$ clearly has to reflect $W'$ with respect to $\laa 0;(0,0)\raa $, this implies that the length of this distinguished closed walk is either $2m$ or $4m$. Since this equals $2n$ and $n$ is odd, we must have that $n = m$. Since $s = 2$, $m = n$, $n$ is odd and we can assume that $r = 1$ (by Proposition~\ref{pro:iso2}), this shows that $\G$ is isomorphic to a graph from Proposition~\ref{pro:2AT}, as claimed.\smallskip

\noindent
{\sc Case 2:} $n$ is even.\\
Since $r^4 = 1$, the walk $W$ is of length $d = \mathrm{lcm}(4n/\mathrm{gcd}(n,1- \delta r^2), 2m)$, and so $d = n$ implies that $4 \mid \mathrm{gcd}(n,1- \delta r^2)$. In particular, $4 \mid n$, and so $1+\delta r^2 \neq 0$, implying that $1+\delta r^2 = n/2$. But then $1 - \delta r^2 = 2 + n/2$, and so the fact that $4 \mid \mathrm{gcd}(n,1- \delta r^2)$ implies that $n \equiv 4 \pmod{8}$. As $r^2 \neq \pm 1$ while $r^4 = 1$, the assumption $r^{2m} = \pm 1$ forces $m$ to be even. Also, as $n/2 \equiv 2 \pmod 4$ and $r \equiv \pm 1 \pmod{4}$, it follows that $\delta = 1$, that is $1+r^2 = n/2$. This also forces $d = \mathrm{lcm}(n,2m)$, and so $n = d$ implies $2m \mid n$. It follows that $m = 2k$ for some odd integer $k$. Consider again the distinguished closed walk $W$. As the fourth vertex on it is $\laa 4;(n/2+2, r(n/2+2))\raa$, we see that its $(2m)$-th vertex is $\laa 0;(k(n/2+2), kr(n/2 + 2))\raa = \laa 0 ; (n/2 + m, n/2 + mr)\raa$. This time let $\alpha \in \Aut(\G)$ be an automorphism fixing each of $\laa 0;(0,0)\raa$, $\laa -1; (0,n/2-r)\raa$ and $\laa 1;(1,0)\raa$, while interchanging $\laa 1;(-1,0)\raa$ with $\laa -1;(0,n/2+r)\raa$. Then $\alpha$ fixes $W$ pointwise and thus also $\laa 0;(n/2+m, n/2 + mr)\raa$. Letting $W'$ be the distinguished closed walk at $\laa -1;(0,n/2+r)\raa, \laa 0;(0,0)\raa, \laa 1;(-1,0)\raa )$, we see that $\alpha$ reflects $W'$ with respect to $\laa 0;(0,0)\raa$, and thus interchanges $\laa 0;(n/2-m, n/2 - mr)\raa$ with $\laa 0;(n/2+m, n/2 + mr)\raa$. As the latter is left fixed by $\alpha$, we find that $2m \equiv 0 \pmod {n}$, and so $n = 2m$. Since $s = 2$, $n = 2m$ with $n \equiv 4 \pmod{8}$ and $1+r^2 = n/2 = m$, the graph $\G$ is as in Proposition~\ref{pro:2AT}, as claimed.
\end{proof}

\begin{corollary}
\label{cor:ms=3}
Let $m, s, n$ and $r$ be as in Assumption~\ref{as:1} and let $\G = \CPM(m,s,n;r)$. If $ms = 3$, then $\G$ is arc-transitive but is not $2$-arc-transitive. 
\end{corollary}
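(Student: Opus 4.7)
The plan is to dispose of both halves of the statement by invoking results already proved in Section~\ref{sec:symiso} and at the start of Section~\ref{sec:6cycles}. First, the hypothesis $ms = 3$ together with the standing requirement $s \geq 2$ from Assumption~\ref{as:1} forces $m = 1$ and $s = 3$, so the graph in question is $\G = \CPM(1,3,n;r)$ with $r^{3} = \pm 1$. Squaring then gives $r^{2s} = r^{6} = 1$, so in particular $r^{2s} - 1 = 0$ in $\ZZ_n$. Proposition~\ref{pro:AT} applies verbatim and yields that $\G$ is arc-transitive.

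For the ``not $2$-arc-transitive'' direction the strategy is to invoke Proposition~\ref{pro:6m=n}, which says that a $2$-arc-transitive $\CPM$ graph possessing any $6$-cycle must have $s = 2$. Since our $s$ equals $3$, it suffices to exhibit a single $6$-cycle of $\G$. A natural candidate is the closed walk that traverses the cycle $V_0 \to V_1 \to V_2 \to V_0$ twice, using positive edges on the first lap and the corresponding negative edges on the second lap:
$$
	\laa 0; (0,0,0)\raa \to \laa 1; (1,0,0)\raa \to \laa 2; (1,r,0)\raa \to \laa 0; (1,r,r^2)\raa \to \laa 1; (0,r,r^2)\raa \to \laa 2; (0,0,r^2)\raa \to \laa 0; (0,0,0)\raa.
$$
Each step matches the adjacency rule of Construction~1 with $\ell$ equal to $i$ mod $3$, so these are genuine edges of $\G$; the six vertices listed are pairwise distinct because $n \geq 3$ and $r, r^2 \in \ZZ_n^{*}$; and every $2$-subpath is a non-anchor, so the cycle has the (permitted, by Lemma~\ref{le:6traces}) trace $n^6$.

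With a $6$-cycle of $\G$ in hand, Proposition~\ref{pro:6m=n} forces $s = 2$ whenever $\G$ is $2$-arc-transitive, contradicting $s = 3$. Hence $\G$ is not $2$-arc-transitive, completing the proof. There is no serious obstacle in this argument; the only minor bookkeeping is checking the distinctness of the six vertices of the exhibited cycle, which is immediate from $n \geq 3$ and $r$ being a unit modulo $n$.
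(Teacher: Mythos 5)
Your argument is correct and is essentially identical to the paper's own proof: both deduce $m=1$, $s=3$, note $r^{2s}=r^6=1$ so Proposition~\ref{pro:AT} gives arc-transitivity, and then exhibit the very same $6$-cycle of trace $n^6$ (your listed cycle coincides vertex-for-vertex with the one in the paper) before invoking Proposition~\ref{pro:6m=n} to rule out $2$-arc-transitivity.
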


\begin{proof}
Since $s \geq 2$, we must have that $m = 1$ and $s = 3$. That $\G$ is arc-transitive now follows from Proposition~\ref{pro:AT}. Since 
$$
	(\laa 0;\mb{0}\raa, \laa 1;\mb{e_0}\raa, \laa 2;\mb{e_0}+r\mb{e_1}\raa, \laa 0;\mb{e_0}+r\mb{e_1}+r^2\mb{e_2}\raa, \laa 1;r\mb{e_1}+r^2\mb{e_2}\raa, \laa 2;r^2\mb{e_2}\raa)
$$ 
is a $6$-cycle of $\G$, Proposition~\ref{pro:6m=n} implies that $\G$ is not $2$-arc-transitive.
\end{proof}

\section{The $8$-cycles}
\label{sec:8cycles}

In this section we prove that the graphs from Proposition~\ref{pro:2AT} are in fact the only $2$-arc-transitive $\CPM$ graphs satisfying Assumption~\ref{as:1} (see Theorem~\ref{the:2ATclass}). We accomplish this by analyzing the possible $8$-cycles.

Let us first note that among the $\CPM$ graphs {\it not}  satisfying Assumption~\ref{as:1}, there are some 2-arc-transitive graphs:   In the case $m= 1, s=2$ the graph, as we have remarked, is toroidal.  Only one of these,  $\{4,4\}_{4,0} =\CPM(2,2,4,1)$ is 2-arc-transitive.

\begin{lemma}
\label{le:8traces}
Let $m, s, n$ and $r$ be as in Assumption~\ref{as:1}. Suppose $\G = \CPM(m,s,n;r)$ is $2$-arc-transitive. Then the only possible traces of $8$-cycles of $\G$ are $T_1 = anananan $, $T_2 = a^8 $, $T_3 = a^3n^2an^2$, $T_4 = an^3an^3$, $T_{12} = a^2n^6$ and $T_{13} = n^8$. Moreover, if $\G$ has $8$-cycles of trace $T_{12}$, then $\G \cong \CPM(3,2,3;1)$.
\end{lemma}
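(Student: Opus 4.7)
My plan is to enumerate all cyclic-plus-reflection equivalence classes of length-$8$ sequences in $\{a,n\}^8$ having an even number of $a$'s, as forced by Lemma~\ref{le:obser}, compute the disbalancedness of each, and eliminate those with disbalancedness equal to $1$ or $2$ by Lemma~\ref{le:coiled_ms}. Exactly thirteen traces survive this initial pruning; six of them are the ones listed in the lemma, and seven need to be excluded, namely $a^5nan$ and $a^3na^3n$ (both of disbalancedness $0$, with six anchors), the class $(2,1,0,1)$ (trace $annanaan$, disbalancedness $0$, four anchors), and the four disbalancedness-$4$ traces $a^2nan^5$, $a^4n^4$, $a^2na^2n^3$, $a^2n^2a^2n^2$.

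I would handle these seven with three successive tools. The label condition of Lemma~\ref{le:obser} at once kills $a^5nan$ (its two label-$1$ edges are consecutive), the class $(2,1,0,1)$ whenever $s\ge 3$ (two label-$0$ edges are then consecutive and the only ones of their label), and the subcase $s=4$ of all four disbalancedness-$4$ traces (some label would occur just once). In the remaining cases I would write out the closing condition $v_0=\mb{0}$ component by component: for $a^3na^3n$ this yields $4\equiv 0\pmod n$, contrary to Assumption~\ref{as:1}; for the class $(2,1,0,1)$ with $s=2$ it yields $\sigma_1-3\sigma_4\equiv 0\pmod n$ with $\sigma_1,\sigma_4\in\{\pm 1\}$, forcing $n\mid 2$ or $n\mid 4$; for $a^2na^2n^3$ at $(m,s)=(2,2)$ the condition $r^2\equiv \pm 3\pmod n$ together with $r^4\equiv \pm 1$ allows only $n\in\{5,8,10\}$, and a direct inspection of the squares in $\ZZ_n^*$ produces no admissible $r$. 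The three remaining disbalancedness-$4$ traces all force $r^2\equiv \pm 1\pmod n$ at $(m,s)=(2,2)$; the graph $\CPM(2,2,n;r)$ then satisfies $2(1\mp r^2)=0$ and hence possesses $6$-cycles of trace $an^2an^2$, so by Proposition~\ref{pro:6m=n}, $\G$ would have to be isomorphic to one of the graphs of Proposition~\ref{pro:2AT}. But $\CPM(2,2,n;r)$ with $n\ge 3$, $n\ne 4$, matches neither listed form (with the case $n=6$ reduced to $n=3$ via Lemma~\ref{le:discon}), a contradiction.

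For the moreover claim, $T_{12}=a^2n^6$ has disbalancedness $6$, so $ms\in\{3,6\}$ and, since $s\ge 2$, $(m,s)\in\{(1,3),(3,2),(2,3),(1,6)\}$. Lemma~\ref{le:obser} discards $(1,6)$ because five of the six possible edge labels on the cycle appear only once. For $(1,3)$ and $(2,3)$ the closing condition reduces, after a short calculation, to $3\equiv \pm 1\pmod n$, excluded. In the remaining case $(m,s)=(3,2)$, setting $u=r^2$ (so $u^3=\pm 1$), the two components of the closure read
\[
  \sigma_3 u^2+\sigma_5 u \equiv 3 \pmod n, \qquad \sigma_2 u^2+\sigma_4 u+\sigma_6 \equiv 0 \pmod n,
\]
with each $\sigma_j\in\{\pm 1\}$. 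Factoring $u^3\mp 1$ as $(u\mp 1)(u^2\pm u+1)$ and running through the four possibilities for $(\sigma_3,\sigma_5)$ forces $u=1$ together with $n\mid 3$, hence $n=3$ and $r\equiv \pm 1\pmod 3$; by Lemma~\ref{le:iso}, $\G\cong\CPM(3,2,3;1)$. The hardest part throughout is the group of three disbalancedness-$4$ traces at $(m,s)=(2,2)$: neither labels nor a direct contradiction from the closing condition is available there, and one has to combine the closing analysis with the $6$-cycle of trace $an^2an^2$ and Proposition~\ref{pro:6m=n} to rule out $2$-arc-transitivity.
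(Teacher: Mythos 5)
Your overall strategy is essentially the paper's: enumerate the dihedral classes of length-$8$ codes with an even number of $a$'s, discard disbalancedness $2$ via Lemma~\ref{le:coiled_ms}, and eliminate the unwanted survivors by label counts (Lemma~\ref{le:obser}) and component-wise closing conditions. Your treatments of $a^5nan$, $a^3na^3n$, the class $a^2nan^2an$, the trace $a^2na^2n^3$, and the whole $T_{12}=a^2n^6$ analysis (including the $(m,s)=(3,2)$ computation forcing $n=3$) all agree in substance with the paper's proof. The divergence, and the gap, is in the three remaining disbalancedness-$4$ traces at $(m,s)=(2,2)$. A small point first: for $an^5an$ (your ``$a^2nan^5$'' is a typo) the closing conditions only yield $r^2\equiv\pm1$ or $r^2\equiv\pm3\pmod n$, so the assertion that it ``forces $r^2\equiv\pm1$'' tacitly uses your earlier observation that $r^2\equiv\pm3$ is impossible; that is available but must be invoked. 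The serious issue is the final contradiction. Proposition~\ref{pro:6m=n} only says that $\G$ is \emph{isomorphic} to a graph of Proposition~\ref{pro:2AT}, and ``$\CPM(2,2,n;r)$ matches neither listed form'' is a comparison of parameter tuples, not a non-isomorphism proof; parameters of $\CPM$ graphs are not isomorphism invariants (this is precisely what Section~\ref{sec:Iso} is about). An order count rescues most cases but not all: for instance $\CPM(2,2,500;1)$ and the genuine Proposition~\ref{pro:2AT} graph $\CPM(50,2,100;7)$ both have $250000$ vertices, so you would still need a further invariant (girth works, since $r^2\equiv\pm1$ with $ms=4$ produces a coiled $4$-cycle of trace $n^4$ in your graph, while the graphs of Proposition~\ref{pro:2AT}, apart from the $4$-cube which is too small here, contain no $4$-cycles).

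That last observation also shows the detour through $6$-cycles and Proposition~\ref{pro:6m=n} is unnecessary, and it is how the paper argues: since $n\neq4$ and $ms\geq 3$, no anchor lies on a $4$-cycle, so $2$-arc-transitivity forces $\G$ to have no $4$-cycles at all; but $r^2\equiv\pm1$ together with $ms=4$ yields the $4$-cycle $(\laa 0;(0,0)\raa,\laa 1;(1,0)\raa,\laa 2;(1,r)\raa,\laa 3;(0,r)\raa)$, a contradiction. This disposes of $a^4n^4$, $a^2n^2a^2n^2$ and the $\pm1$ branch of $an^5an$ in one stroke and repairs your argument; as written, however, the step ``matches neither listed form, a contradiction'' does not stand on its own.
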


\begin{proof}
We  begin our analysis of $8$-cycles  by noting that the Assumption requires $ms$ to be at least 3, and so, by Lemma~\ref{le:coiled_ms}, traces with disbalancedness exactly 2 can be eliminated. 

The remaining possibilities are  $T_1 = anananan $, $T_2 = a^8 $, $T_3 = a^3n^2an^2$, $T_4 = an^3an^3$, $T_5 = a^5nan$, $T_6 = a^2nan^2an$, $T_7 = a^3na^3n$, all non-coiled; $T_8 = a^4n^4$, $T_9 = a^2n^2a^2n^2$, $T_{10} = a^2na^2n^3$, $T_{11} = an^5an$, all of  disbalancedness 4;  $T_{12} = a^2n^6$ and $T_{13} = n^8$, the final two having disbalancedness 6 and 8, respectively.  The non-coiled cycles are illustrated in Figure~\ref{fig:8cyc_noncoiled}, and those of disbalancedness 4 are shown in   Figure~\ref{fig:8cyc_coiled_4}.   In each figure, the ovals represent the sets $V_i$.

\begin{center}

\begin{figure}[h]
\begin{subfigure}{.3\textwidth}
\begin{center}
\includegraphics[height=20mm]{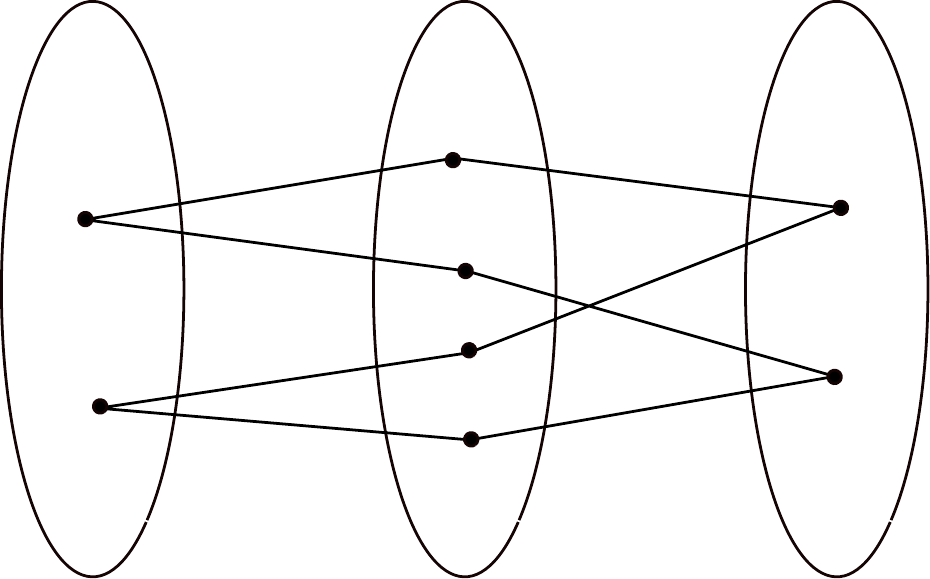}
\caption{$T_1 = anananan$}\label{T1}
\end{center}\end{subfigure}
~
\begin{subfigure}{.3\textwidth}
\begin{center}
\includegraphics[height=20mm]{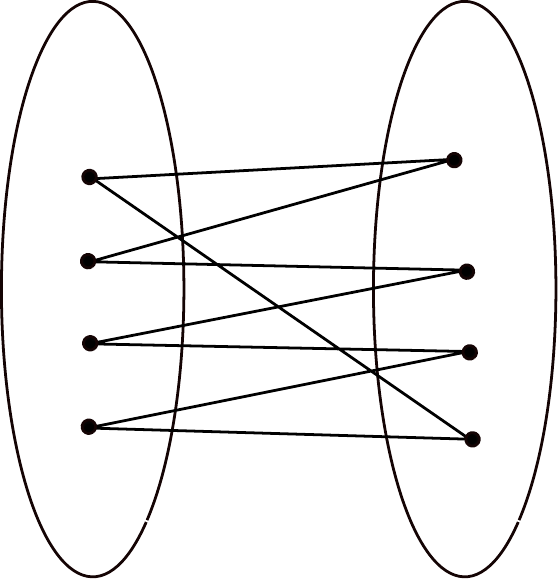}
\caption{$T_2 = a^8$}\label{T2}
\end{center}
\end{subfigure}
~
\begin{subfigure}{.4\textwidth}
\begin{center}
\includegraphics[height=20mm]{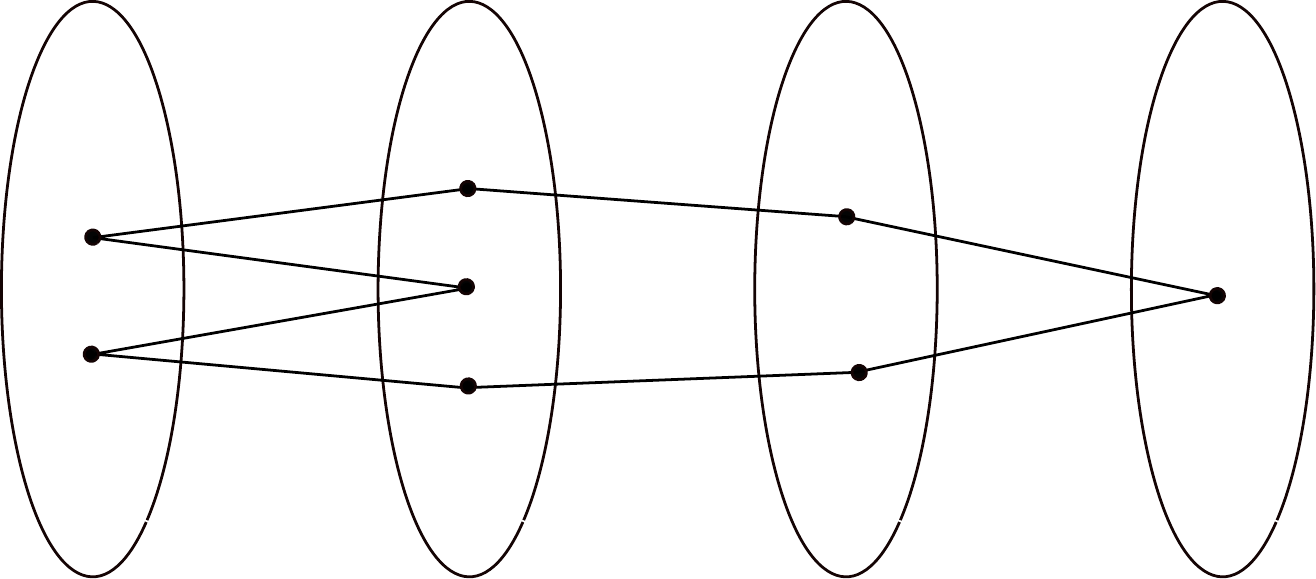}
\caption{$T_3 = a^3n^2an^2$}\label{T3}
\end{center}
\end{subfigure}

\begin{subfigure}{.35\textwidth}
\begin{center}
\includegraphics[height=20mm]{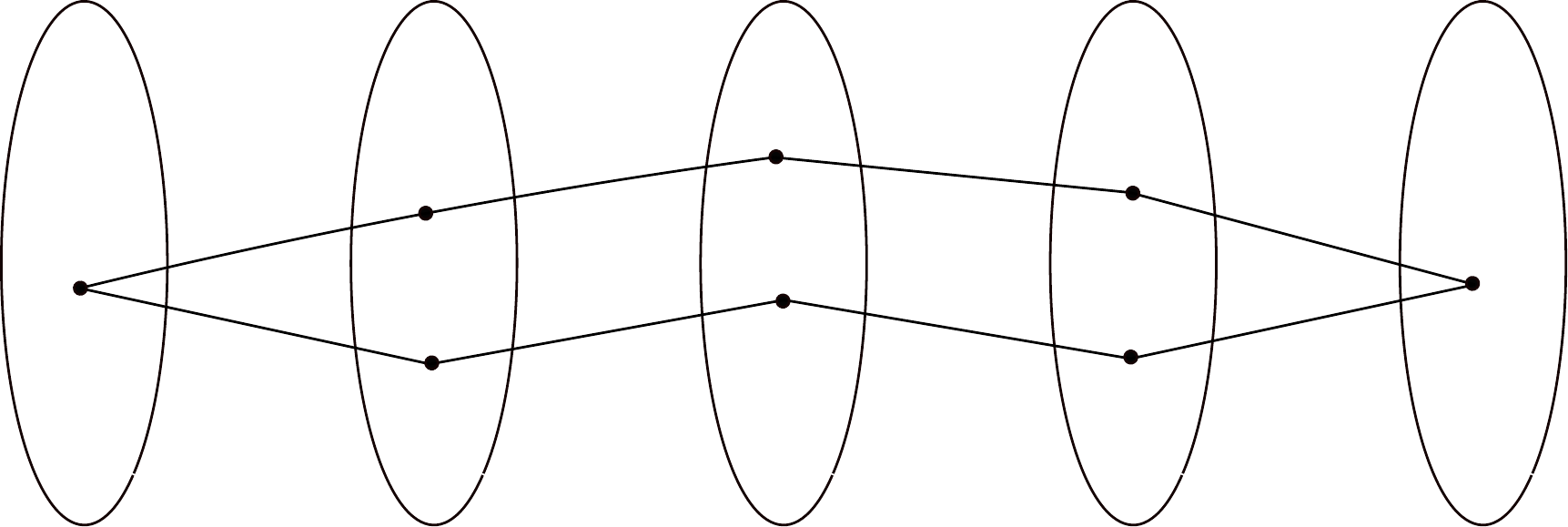}
\caption{$T_4 =  an^3an^3$}\label{T4}
\end{center}
\end{subfigure}
~
\begin{subfigure}{.3\textwidth}
\begin{center}
\includegraphics[height=20mm]{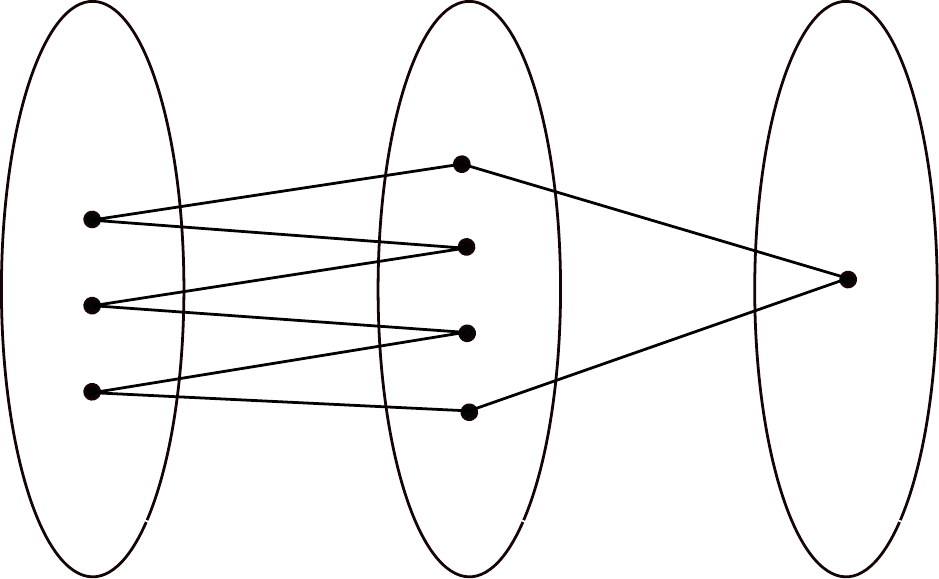}
\caption{$T_5 = a^5nan$}\label{T5}
\end{center}
\end{subfigure}
~
\begin{subfigure}{.3\textwidth}
\begin{center}
\includegraphics[height=20mm]{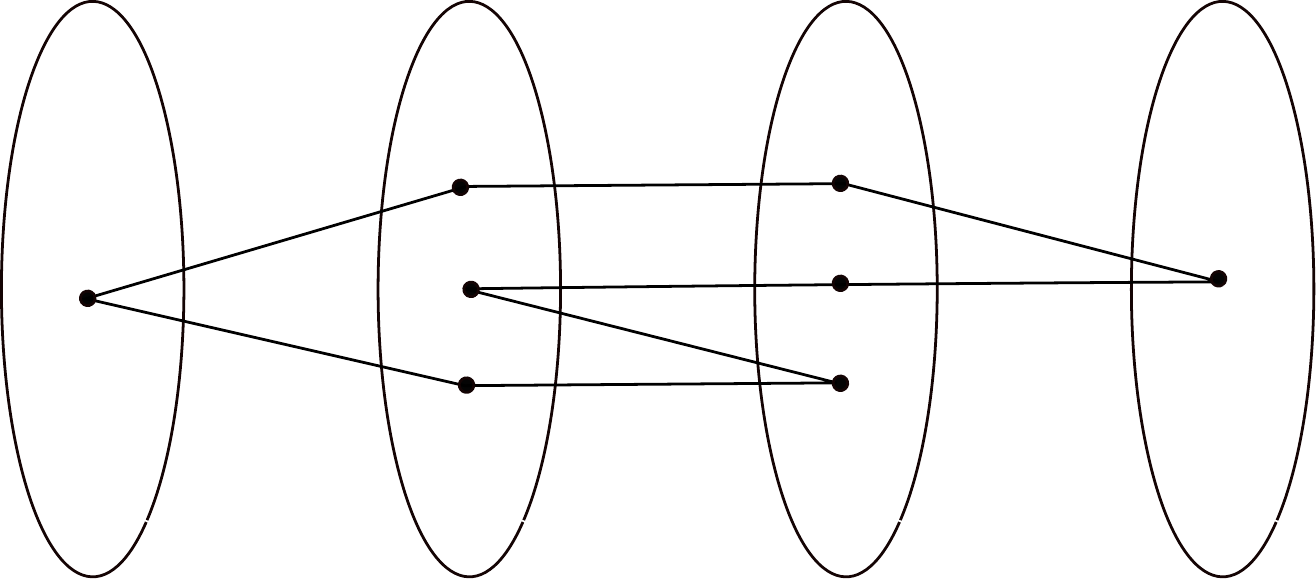}
\caption{$T_6 = a^2nan^2an$}\label{T6}
\end{center}
\end{subfigure}

\begin{subfigure}{\textwidth}
\begin{center}
\includegraphics[height=20mm]{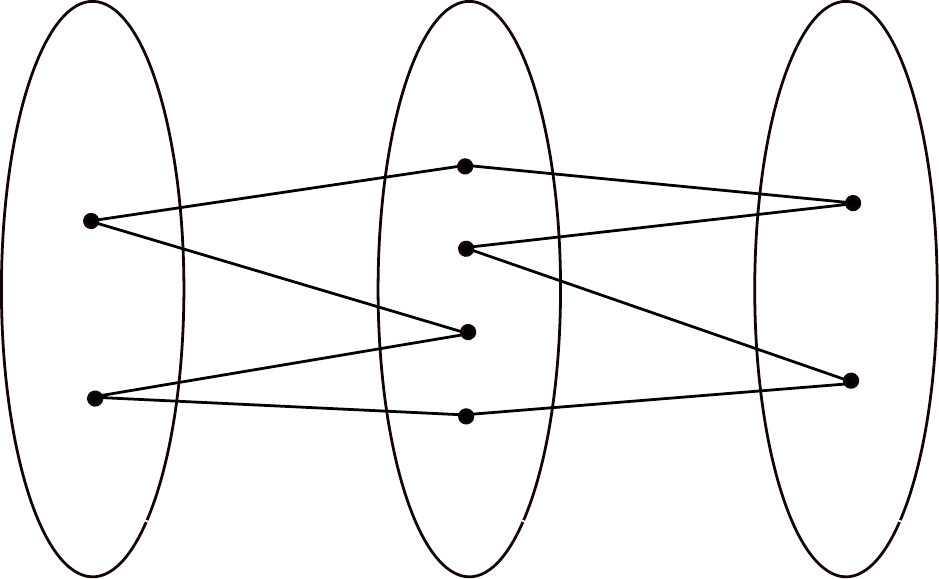}
\caption{$T_7 = a^3na^3n$}\label{T7}
\end{center}
\end{subfigure}

	\caption{Potential non-coiled $8$-cycles.}
	\label{fig:8cyc_noncoiled}

\end{figure}

\end{center}

\begin{center}

\begin{figure}[h]
\begin{subfigure}{.22\textwidth}
\begin{center}
\includegraphics[height=37mm]{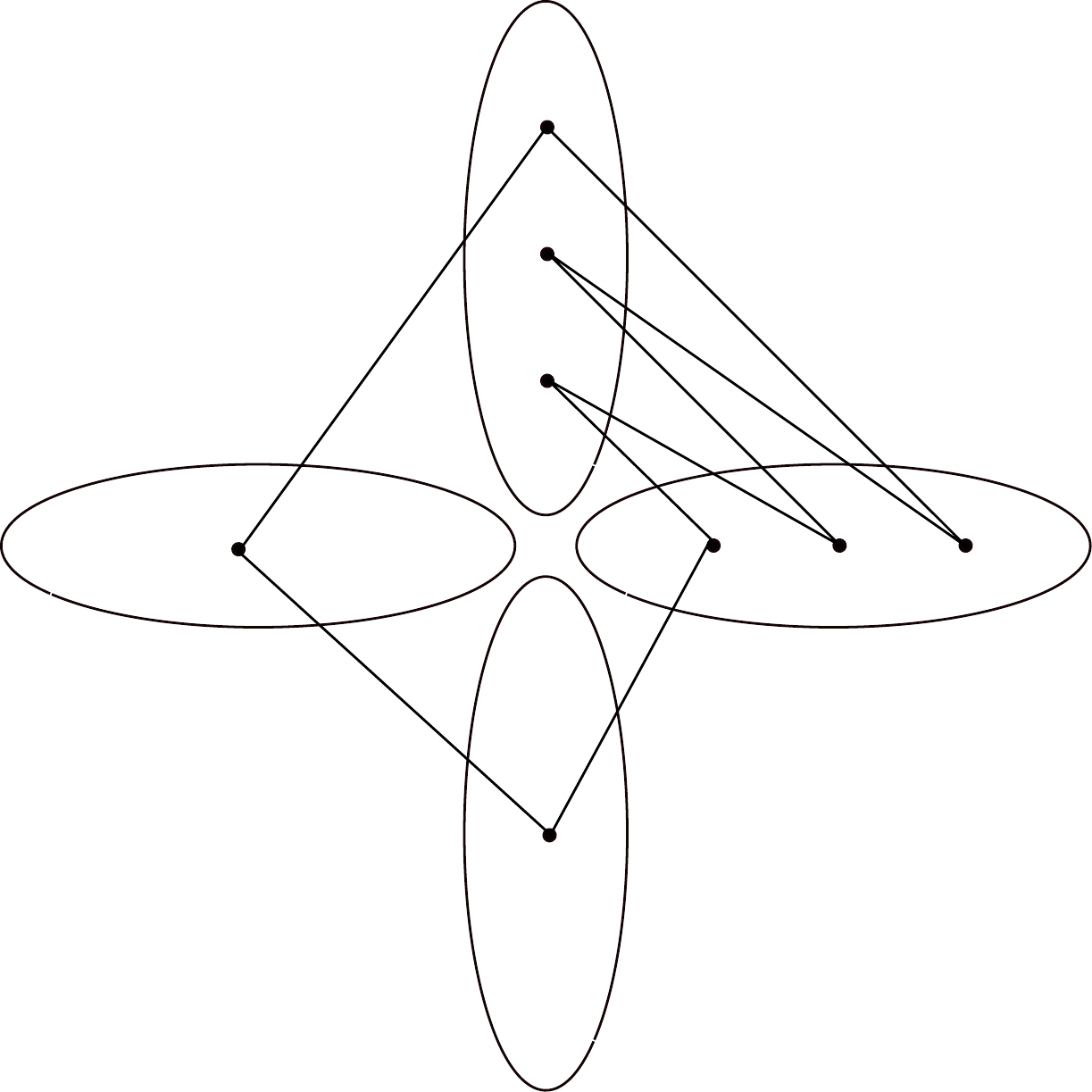}
\caption{$T_8 = a^4n^4$}\label{T8}
\end{center}
\end{subfigure}
~
\begin{subfigure}{.22\textwidth}
\begin{center}
\includegraphics[height=37mm]{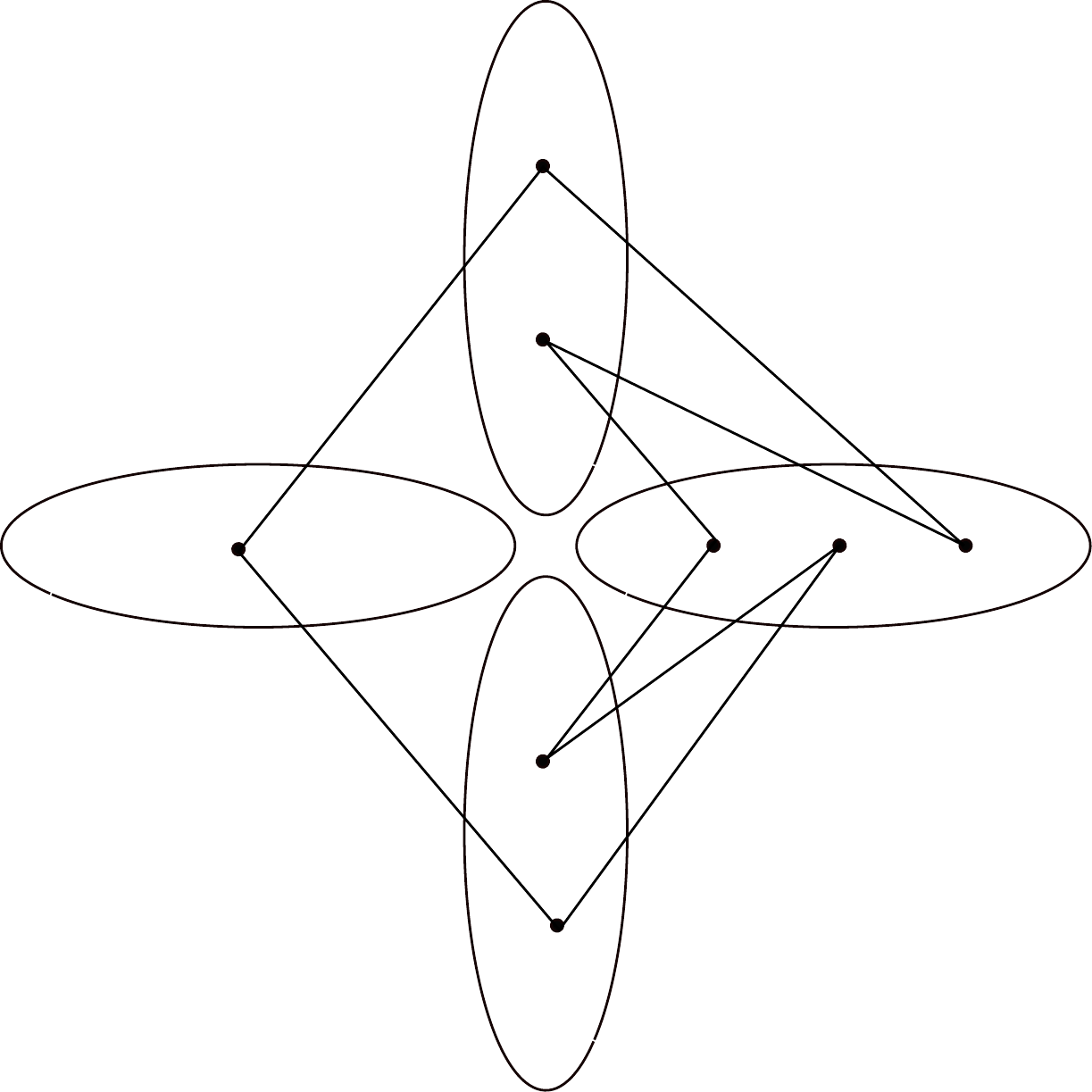}
\caption{$T_{9} = a^2na^2n^3$}\label{T9}
\end{center}
\end{subfigure}
~
\begin{subfigure}{.22\textwidth}
\begin{center}
\includegraphics[height=37mm]{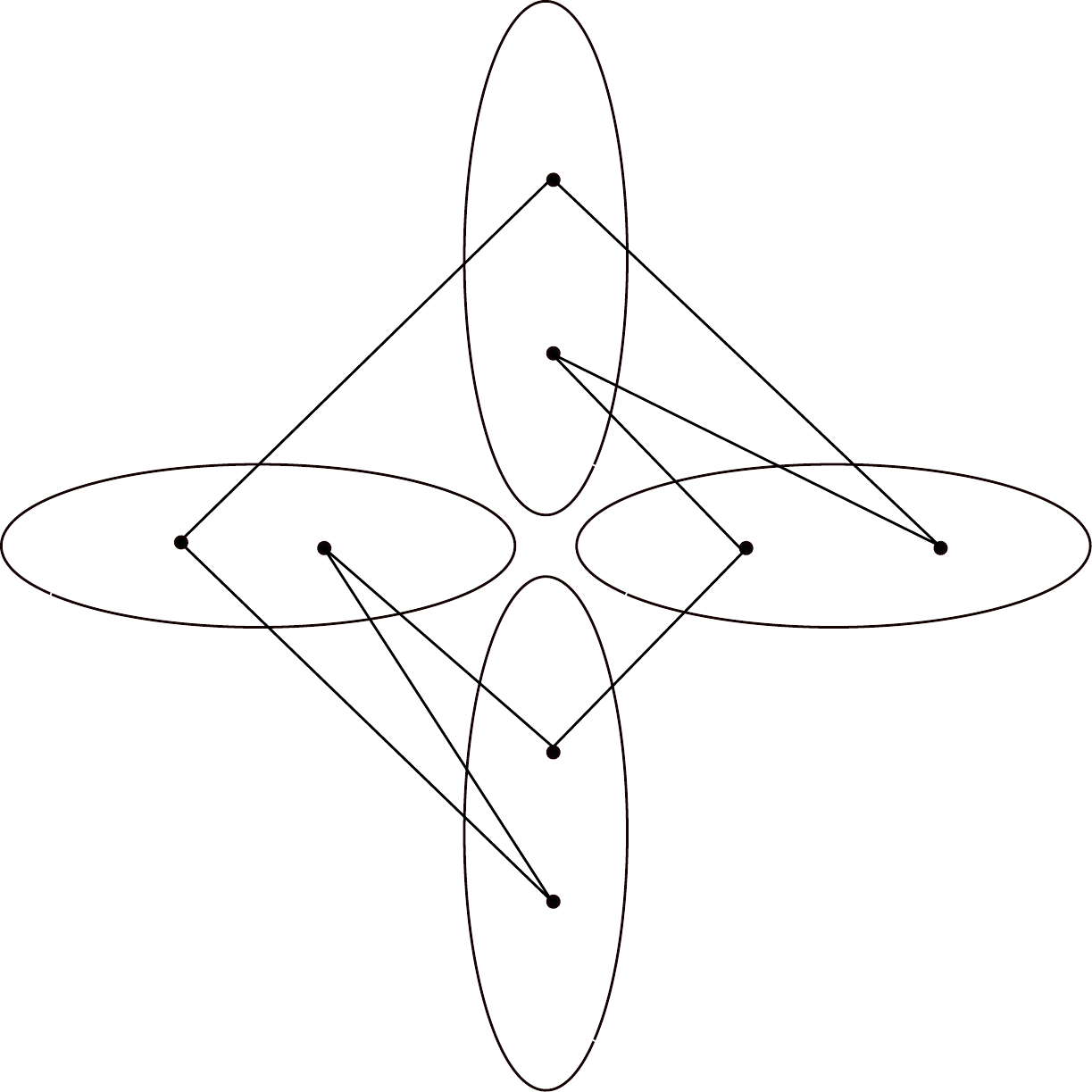}
\caption{$T_{10} = a^2n^2a^2n^2$}\label{T10}
\end{center}
\end{subfigure}
~
\begin{subfigure}{.22\textwidth}
\begin{center}
\includegraphics[height=37mm]{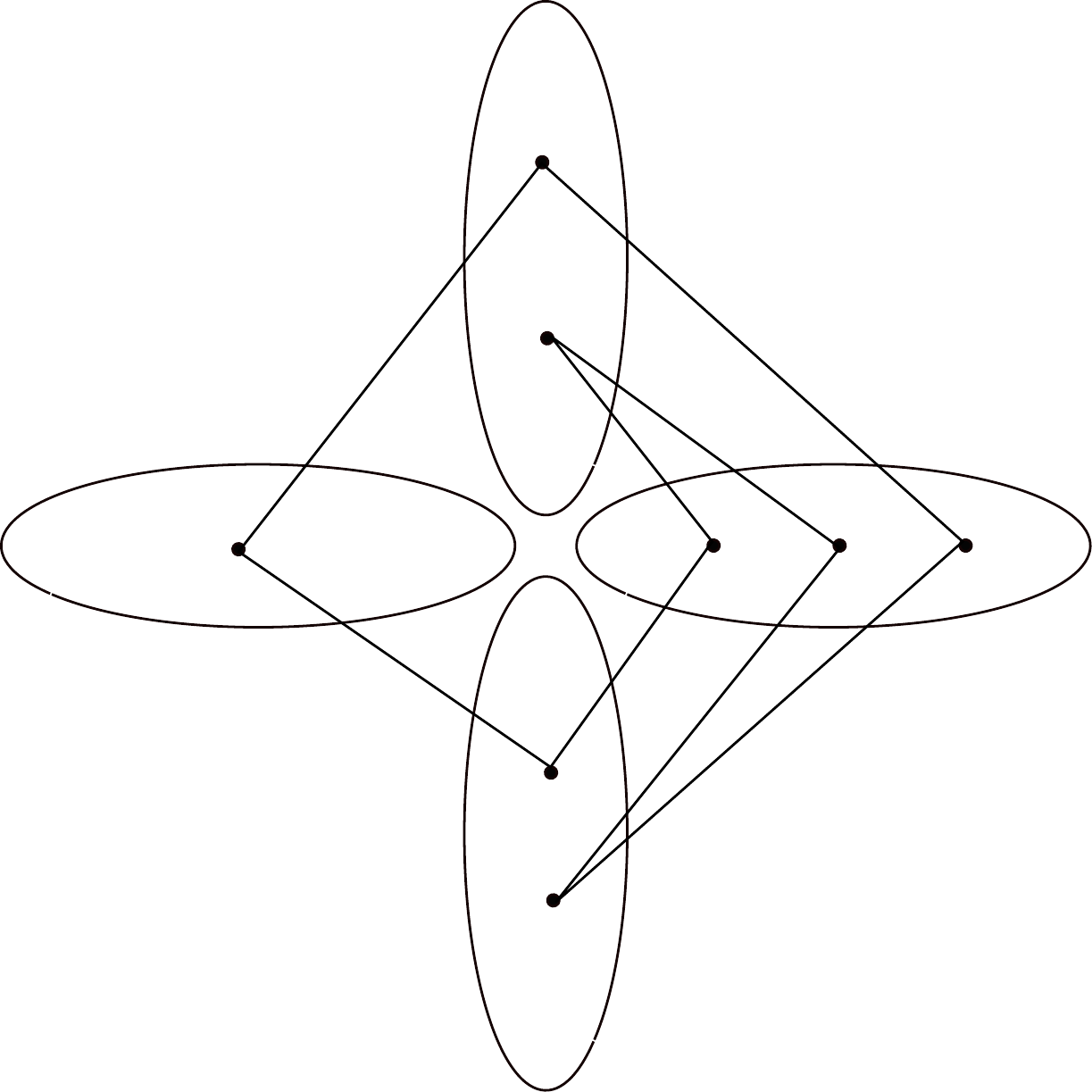}
\caption{$T_{11} = an^5an$}\label{T11}
\end{center}
\end{subfigure}

	\caption{Potential coiled $8$-cycles of disbalancedness $4$.}
	\label{fig:8cyc_coiled_4}

\end{figure}

\end{center}

Lemma~\ref{le:obser} eliminates $T_5 = a^5nan$ (Figure \ref{T5}), while the assumption $n \neq 4$ eliminates $T_6= a^2nan^2an$   (Figure~\ref{T6}) and  $T_7 = a^3na^3n$ (Figure \ref{T7}). 

Suppose now that $C$ is a coiled $8$-cycle of any of the traces with disbalancedness $4$, that is, the trace of $C$ is one of $T_8= a^4n^4$, $T_9 = a^2na^2n^3$, $T_{10} = a^2n^2a^2n^2$ and $T_{11} = an^5an$ (see Figure~\ref{fig:8cyc_coiled_4}). By Lemma~\ref{le:coiled_ms} we must have $ms = 4$, and then Lemma~\ref{le:obser} implies that $s \neq 4$, and so $m = s = 2$. Since $n \neq 4$ and $ms \neq 2$, no anchor of $\G$ lies on a $4$-cycle, and so $2$-arc-transitivity of $\G$ implies that $\G$ has no $4$-cycles. Therefore, $1 \pm r^2 \neq 0$ (otherwise $(\laa 0;(0,0)\raa, \laa 1;(1,0)\raa, \laa 2;(1,r)\raa, \laa 3;(0,r)\raa )$ is a $4$-cycle). This eliminates the traces $T_8$ and $T_{10}$. If $C$ is of trace $T_9$, then the condition for each of the two labels is of the form $3 \pm r^2 = 0$ (by Proposition~\ref{pro:HATgroup} we can assume that $C$ has precisely one vertex from $V_3$). As $r^{ms} = r^4 = \pm 1$, this implies that $9 \equiv \pm 1 \pmod{n}$, and so $n$ divides $8$ or $10$, that is, $n \in \{5, 8, 10\}$ (recall that $n \neq 4$). But then there is no $r \in \ZZ_n^*$ with $r^2 = \pm 3$, a contradiction. Finally, if $C$ is of trace $T_{11}$, then the conditions for the two labels are either of the form $1 \pm r^2 = 0$ or of the form $3 \pm r^2 = 0$. As we already know that none of these is possible, this finally shows that there are no coiled $8$-cycles with disbalancedness $4$ in $\G$.

Suppose finally, that $C$ is of trace $T_{12} = a^2n^6$. By Corollary~\ref{cor:ms=3}, $ms \geq 4$,and then Lemma~\ref{le:coiled_ms} forces $ms = 6$, and so Lemma~\ref{le:obser} implies that $s \in \{2,3\}$. By Proposition~\ref{pro:HATgroup} we can assume that $C$ contains the path 
$$
	(\laa 0;\mb{0}\raa, \laa 1;\mb{e_0}\raa, \laa 0;2\mb{e_0}\raa, \laa 1;3\mb{e_0}\raa, \laa 2;3\mb{e_0}+r\mb{e_1}\raa ).
$$
Therefore, if $s = 3$ the conditions for labels $1$ and $2$ are of the form $1 \pm r^3 = 0$, while the condition for label $0$ is of the form $3 \pm r^3 = 0$, contradicting $n \neq 4$. Therefore, $s = 2$ and $m = 3$, and the conditions for the two labels are 
\begin{equation}
\label{eq:a^2n^6}
3 + \delta_2 r^2 + \delta_4 r^4 = 0 \quad \text{and} \quad 1 +\delta_2' r^2 + \delta_4' r^4 = 0
\end{equation}
for some $\delta_2, \delta_2', \delta_4, \delta_4' \in \{-1,1\}$. Observe that this implies that $n$ is odd (since $r \in \ZZ_n^*$). Now, if $\delta_2\delta_2' = \delta_4\delta_4'$, then adding or subtracting the two equations from \eqref{eq:a^2n^6} yields  $n\mid 4$, which was assumed not to hold. Therefore, subtracting the two equations from \eqref{eq:a^2n^6} yields one of $2 = \pm 2 r^2$ and $2 = \pm 2r^4$. As $n$ is odd and $r^6 = \pm 1$, either of these implies that $r^2 = \pm 1$, and so \eqref{eq:a^2n^6} forces $n = 3$, as claimed. 
\end{proof}

We now analyze the non-coiled $8$-cycles more thoroughly. In particular, we determine the necessary and sufficient conditions for existence of non-coiled $8$-cycles of each of the possible traces from Lemma~\ref{le:8traces} and determine the number of such cycles through any given anchor and through any given non-anchor.  

\begin{proposition}
\label{pro:noncoiled8}
Let $m, s, n$ and $r$ be as in Assumption~\ref{as:1}. Suppose $\G = \CPM(m,s,n;r)$ is $2$-arc-transitive but is not isomorphic to $\CPM(3,2,3;1)$ or $\CPM(5,2,5;1)$. Then for each of the possible traces of non-coiled $8$-cycles of $\G$ the necessary and sufficient condition for the existence of such $8$-cycles, together with the number of such $8$-cycles through each anchor and the number of such $8$-cycles through each non-anchor, are as given in Table~\ref{tab:8noncoiled}.
\end{proposition}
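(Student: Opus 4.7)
The plan is to work through each of the four admissible traces $T_1 = anananan$, $T_2 = a^8$, $T_3 = a^3n^2an^2$, $T_4 = an^3an^3$ in turn, determining the closure conditions on $n$ and $r$ under which $8$-cycles of that trace exist and enumerating such cycles through both a fixed anchor and a fixed non-anchor. Throughout I will exploit the fact that, because $\G$ is $2$-arc-transitive (hence vertex- and edge-transitive), the relevant counts depend only on the trace and not on the specific $2$-arc chosen, and that the anchor-count must in fact coincide with the non-anchor count for every trace.

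The enumeration framework is uniform. Fix the vertex $\laa 0;\mb{0}\raa$ and an initial direction (forward or backward in $i$); the trace then prescribes the entire pattern of forward and backward steps of a closed walk of length $8$. This in turn fixes the sequence of edge labels $\ell_0,\ldots,\ell_7$ (with $\ell_j$ equal to $i_j$ mod $s$), so that the only remaining freedom consists of the signs $\varepsilon_j \in \{-1,+1\}$ of the eight edges. The walk closes into a cycle precisely when, for each label $\ell \in \{0,\ldots,s-1\}$, the signed sum $\sum_{j:\ell_j = \ell} \delta_j \varepsilon_j r^{i_j}$ vanishes in $\ZZ_n$, where $\delta_j = +1$ for forward edges and $\delta_j = -1$ for backward edges. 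Passing from closed walks to cycles through a fixed $2$-arc then amounts to discarding non-simple walks and dividing by the dihedral symmetry of the closed walk (of order at most $16$) that fixes the chosen $2$-arc.

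Applying this to each trace: for $T_1$ the forward-forward-backward-backward pattern produces automatic cancellation in every label equation, so $T_1$-cycles exist for all parameters and the count depends only on $s$. For $T_2$ the cycle is confined to two adjacent sets $V_i$, all eight edges share a common label, and the closure reduces to an integer sum of eight $\pm 1$'s being $\equiv 0 \pmod n$; together with simplicity of the cycle this pins down a very small set of values of $n$. For $T_3$ and $T_4$ the closed walk spans three or four consecutive $V_i$'s, so the closure conditions become polynomial equations in $r$ of degree at most $4$; combined with $r^{ms} = \pm 1$ and Proposition~\ref{pro:iso2}, they leave only finitely many admissible parameter families per trace, from which the conditions stated in the table can be extracted.

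The principal obstacle lies in the bookkeeping for $T_3$ and $T_4$: there are many sign-vector configurations to examine, and several produce the same cycle under the dihedral action on the closed walk, so an orbit-count must be executed carefully to avoid over- or under-counting. In addition, numerical coincidences in very small cases create additional $8$-cycles that would spoil the generic formulas; this is exactly what forces the exclusion of $\CPM(3,2,3;1)$ and $\CPM(5,2,5;1)$ in the statement. Once those two exceptional graphs are set aside, each remaining case reduces to a tractable enumeration, and the resulting counts can be tabulated and cross-verified against the equality-of-counts constraint between anchors and non-anchors that $2$-arc-transitivity imposes via Proposition~\ref{pro:blocks}.
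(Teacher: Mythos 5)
Your enumeration framework (fix the trace, hence the label sequence, and solve the sign equations modulo $n$) is essentially the paper's method, but two of the claims you plan to ``exploit throughout'' are not correct and leave real gaps. First, $2$-arc-transitivity does \emph{not} force the number of $8$-cycles of a \emph{fixed} trace through an anchor to equal the number through a non-anchor; an automorphism carrying an anchor to a non-anchor need not preserve traces, so the only constraint is on the \emph{total} number of $8$-cycles (over all traces, coiled ones included) through a $2$-path. Indeed the table you are trying to prove already refutes your claim: $T_1$ gives $2$ versus $1$, $T_3$ gives $4$ versus $2$, $T_4$ gives $2$ versus $3$. Any ``cross-verification'' built on per-trace equality would either produce wrong counts or spurious contradictions.

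Second, your treatment of $T_4=an^3an^3$ cannot yield the entry ``$s=2$'' in the condition column. Lemma~\ref{le:obser} only gives $s\le 3$, and for $s=3$ cycles of trace $T_4$ genuinely exist whenever $2(1\pm r^3)=0$; no closure condition or appeal to Proposition~\ref{pro:iso2} rules them out. The paper eliminates $s=3$ by a separate argument that uses $2$-arc-transitivity in a sharper way: it first forces $n=8$ and no coiled $8$-cycles (so every anchor and non-anchor lies on exactly seven $8$-cycles), and then compares how those seven cycles distribute among the possible extensions of two specific $2$-arcs, obtaining a contradiction. Nothing in your outline supplies this step. Relatedly, the exclusion of $\CPM(3,2,3;1)$ and $\CPM(5,2,5;1)$ is not just ``small-case coincidences'': it is needed precisely to show that the two families of conditions for $T_3$, namely $4\pm 2r^2=0$ and $2\pm 4r^2=0$, cannot hold simultaneously, which is what pins the anchor count for $T_3$ at $4$; your proposal should identify and prove that incompatibility (and likewise prove, not just assert, that the generic cycles are the \emph{only} $8$-cycles of trace $T_1$, which is what fixes the counts $2$ and $1$).
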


\begin{table}[h]
\begin{center}
\begin{tabular}{|l|c|c|c|}
	 \hline 
	trace & condition & anchor & non-anchor \\ \hline \hline
	$T_1 = anananan$ & none & $2$ & 1 \\ \hline
	$T_2 =a^8$ & $n = 8$ & $1$ & $0$ \\ \hline	
	$T_3 =a^3n^2an^2$ & $s = 2$ and $4\pm 2r^2 = 0$ or $2 \pm 4r^2 = 0$ & 4 & 2 \\ \hline
	$T_4 =an^3an^3$ & $s = 2$ and $2(1 \pm r^2) = 0$ & $2$ & $3$ \\ \hline
	 \end{tabular}       
\end{center}
\caption{Information on non-coiled $8$-cycles in $2$-arc-transitive $\CPM(m,s,n;r)$ graphs.}
\label{tab:8noncoiled}
\end{table}

\begin{proof}
By Corollary~\ref{cor:ms=3} we have that $ms \geq 4$ and by Lemma~\ref{le:8traces} the only possible traces of non-coiled $8$-cycles of $\G$ are  $T_1$,  $T_2$, $T_3$  and $T_4$. We analyze these one by one. 
\smallskip

\noindent
{\sc Trace} $T_1 =anananan$:\\
Let $C$ be an $8$-cycle of this trace. Since $s \geq 2$, Proposition~\ref{pro:HATgroup} implies that we can assume that $C$ contains the non-anchor $(\laa 0;\mb{0}\raa, \laa 1 ; \mb{e_0}\raa, \laa 2 ; \mb{e_0} + r\mb{e_1}\raa)$. In view of the trace of $C$ it thus follows that $C$ in fact contains the path $(\laa 1;-\mb{e_0}\raa, \laa 0;\mb{0}\raa, \laa 1 ; \mb{e_0}\raa, \laa 2 ; \mb{e_0} + r\mb{e_1}\raa, \laa 1 ; \mb{e_0} + 2r\mb{e_1}\raa)$. Since $n \neq 4$, the only possibility to get a valid condition for label 0 is to require that the next two vertices must be $\laa 0 ; 2r\mb{e_1}\raa$ and $\laa 1 ; -\mb{e_0} + 2r\mb{e_1}\raa$. A similar argument for label $1$ shows that $C$ must be a generic $8$-cycle. Therefore, the only $8$-cycles of trace $anananan$ are the generic ones. The above argument also explains the claims about the numbers of such $8$-cycles through a given anchor and through a given non-anchor. 
\smallskip

\noindent
{\sc Trace} $T_2 = a^8$:\\
Clearly, $8$-cycles of this trace exist if and only if $n = 8$ (recall that $n \neq 4$). Moreover, in this case there is precisely one such $8$-cycle through each anchor and there are of course no such $8$-cycles through non-anchors. 
\smallskip

\noindent
{\sc Trace} $T_3 = a^3n^2an^2$:\\
Lemma~\ref{le:obser} implies that $8$-cycles of this trace can exist only if $s = 2$. In that case there are two essentially different possibilities for such $8$-cycles with respect to how the positive and negative anchors are distributed on them. One possibility is that the anchor, surrounded by two non-anchors on each side, is a negative anchor, and the other is that it is a positive anchor . We consider the possibility that it is positive  (as in the case depicted on Figure~\ref{T3}) and leave the other case to the reader (but one may simply apply the isomorphism from Lemma~\ref{le:iso}). 

Let $C$ be such an $8$-cycle. Combining together Proposition~\ref{pro:HATgroup} and Lemma~\ref{le:obser} we can assume that $C$ contains the path $(\laa 1;(-1,0)\raa, \laa 0;(0,0)\raa, \laa 1 ; (1,0)\raa, \laa 0 ; (2,0)\raa, \laa 1 ; (3,0)\raa, \laa 2 ; (3,r)\raa)$. Since we only have two edges of label $1$ and both of them are in $\Gamma[V1; V2]$, choosing one of them uniquely determines the other one if the condition for label 1 is to be valid. In particular, the other vertex of $C$ in $V_2$ is $\laa 2; (-1,r)\raa$. For $C$ to be an $8$-cycle we therefore must have that $3 \pm 2r^2 = -1$, that is $4 \pm 2r^2 = 0$. Conversely, if $4 \pm 2r^2 = 0$, we obviously do get an $8$-cycle of trace $a^3n^2an^2$. The analysis of the above mentioned other situation (where the anchor surrounded by the non-anchors on each side is negative) reveals that a necessary and sufficient condition for the existence of such $8$-cycles is $2 \pm 4r^2 = 0$. 

Since $n \neq 4$ and $r \in \ZZ_n^*$, at most one of the conditions $4 \pm 2r^2 = 0$ can hold and the same applies to $2 \pm 4r^2 = 0$. Moreover, if one of $4 \pm 2r^2 = 0$ and one of $2 \pm 4r^2 = 0$ hold simultaneously, then $2 = \pm 4r^2 = 2\cdot(\pm 4) = \pm 8$, and so $n \in \{3,5,6,10\}$. It follows that $r^2 = \pm 1$, and so $\G$ possesses $6$-cycles (recall that $s = 2$). Proposition~\ref{pro:6m=n} thus implies that $n \in \{3,5\}$ and $m = n$. By Proposition~\ref{pro:iso2} we can assume $r = 1$, and so $\G$ is isomorphic to one of $\CPM(3,2,3;1)$ and $\CPM(5,2,5;1)$, which was assumed not to be the case. This shows that at most one of $4 \pm 2r^2 = 0$ and $2 \pm 4r^2 = 0$ holds. Finally, the above analysis clearly shows that there are precisely $4$ corresponding $8$-cycles through a given anchor (we can choose whether the positive anchor is surrounded by anchors or non-anchors and the two edges of label $1$ can either be both positive or both negative) and precisely $2$ of them through a given non-anchor. 
\smallskip

\noindent
{\sc Trace} $T_4 = an^3an^3$:\\
Lemma~\ref{le:obser} implies that if such $8$-cycles exist, $s \leq 3$ must hold. We first show that in fact $s = 2$. Suppose on the contrary that $s=3$ (in which case $ms \geq 4$ implies that $ms \geq 6$). Let $C$ be an $8$-cycle of trace $T_4$. By Proposition~\ref{pro:HATgroup} we can assume that $C$ contains the path $(\laa 3;\mb{e_0}+r\mb{e_1}+r^2\mb{e_2}\raa, \laa 2;\mb{e_0}+r\mb{e_1}\raa, \laa 1;\mb{e_0}\raa, \laa 0;\mb{0}\raa, \laa 1;-\mb{e_0}\raa)$. For labels $1$ and $2$ to yield a valid condition, we thus require that the next two vertices are $\laa 2;-\mb{e_0}+r\mb{e_1}\raa$ and $\laa 3;-\mb{e_0}+r\mb{e_1}+r^2\mb{e_2}\raa$. For $C$ to be an $8$-cycle we therefore require that $2(1\pm r^3) = 0$. This also shows that if such $8$-cycles exist, there are precisely $4$ such $8$-cycles through each anchor (for each of the labels $1$ and $2$ we can choose whether the corresponding edges are both positive or both negative) and precisely $6$ through each non-anchor. Now, since $s = 3$, there are no $8$-cycles of trace $T_3 = a^3n^2an^2$ in $\G$, while Lemma~\ref{le:8traces} implies that the only possible coiled $8$-cycles are those of trace $T_{13} = n^8$. Therefore, since $\G$ surely has generic $8$-cycles, the only way that each anchor is contained on the same number of $8$-cycles as is each non-anchor, is if $8$-cycles of trace $T_2 = a^8$ exist and there are no coiled $8$-cycles. Therefore, $n = 8$ and each anchor and each non-anchor of $\G$ lies on precisely seven $8$-cycles. 

Now, observe that of the seven $8$-cycles through the $2$-arc $(\laa 1;-\mb{e_0}\raa, \laa 0;\mb{0}\raa, \laa 1;\mb{e_0}\raa)$, precisely one (the one of trace $T_2 =a^8$) goes through the neighbor $\laa 0;2\mb{e_0}\raa$ of $\laa 1;\mb{e_0}\raa$, while there are 3 (one generic and two of trace $T_4 =an^3an^3$) through each of $\laa 2;\mb{e_0}\pm r\mb{e_1}\raa$. On the other hand, of the seven $8$-cycles through the $2$-arc $(\laa 0;\mb{0}\raa, \laa 1;\mb{e_0}\raa, \laa 2;\mb{e_0} + r\mb{e_1}\raa)$, 3 go through the neighbor $\laa 1;\mb{e_0} + 2r\mb{e_1}\raa)$, while there are 2 through each of $\laa 3;\mb{e_0} + r\mb{e_1} \pm   r^2\mb{e_2}\raa$. This contradicts the fact that $\G$ is $2$-arc-transitive, proving that $s \neq 3$. 

Finally, consider the possibility $s = 2$. A similar analysis as above now shows that the necessary and sufficient condition for the corresponding $8$-cycles to exist is $2(1 \pm r^2) = 0$. Moreover, for any of the two labels $\ell \in \{0,1\}$, on each such cycle the two edges of label $\ell$ that do not form an anchor, must be such that one is positive and the other is negative. Since $n \neq 4$, at most one of $2(1 + r^2) = 0$ and $2(1 - r^2) = 0$ can hold, and so there are precisely 2 such $8$-cycles through any anchor and precisely 3 such $8$-cycles through any non-anchor. 
\end{proof}

We are now finally ready to classify the $2$-arc-transitive $\CPM$ graphs.

\begin{theorem}
\label{the:2ATclass}
Let $m, s, n$ be integers with $s \geq 2$ and $n \geq 3$, and let $r \in \ZZ_n^*$ be such that $r^{ms} = \pm 1$. Then the graph $\G = \CPM(m,s,n;r)$ is $2$-arc-transitive if and only if $s = 2$ and one of the following holds:
\begin{itemize}
\itemsep = 0pt
\item[(i)] $n = 4$ and $m = 1$, in which case $\G \cong \CPM(1,2,4;1) \cong \CPM(2,2,4;1)$;
\item[(ii)] $n = m$, $n$ is odd and $r^2 = \pm 1$;
\item[(iii)] $n = 2m$, $m \equiv 2 \pmod {4}$ and $1+r^2 \equiv m \pmod{n}$.
\end{itemize}
\end{theorem}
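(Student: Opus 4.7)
The sufficiency direction follows almost immediately from results already established: cases (ii) and (iii) are precisely the two families of Proposition~\ref{pro:2AT}, while case (i) is $\CPM(1,2,4;1) \cong \CPM(2,2,4;1)$ (isomorphism by Lemma~\ref{le:discon}(i)), which is the skeleton of the $4$-cube and is $2$-arc-transitive by either Lemma~\ref{le:ms=2} or Proposition~\ref{pro:n=4}.

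For the necessity direction, my plan is to assume $\G = \CPM(m,s,n;r)$ is $2$-arc-transitive and first peel off the degenerate situations falling outside Assumption~\ref{as:1}. If $ms = 2$, then $m = 1$, $s = 2$, and Lemma~\ref{le:ms=2} forces $n = 4$, placing us in case~(i). If $n = 4$ and $ms \geq 3$, one reduces via Lemma~\ref{le:discon}(i) to the case $m$ even and then applies Proposition~\ref{pro:n=4}, which is $2$-arc-transitive only when $m = s = 2$; this again yields $\CPM(2,2,4;1) \cong \CPM(1,2,4;1)$, i.e.\ case~(i). We may therefore assume Assumption~\ref{as:1} holds, and by Corollary~\ref{cor:ms=3} we have $ms \geq 4$.

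Under these hypotheses, the plan is to reduce to Proposition~\ref{pro:6m=n}: if $\G$ contains a $6$-cycle, then that proposition directly delivers case~(ii) or case~(iii). All that remains is to rule out the possibility that $\G$ has no $6$-cycles. To do so, observe that if $\G$ had no $6$-cycles it would in particular not be isomorphic to $\CPM(3,2,3;1)$ or $\CPM(5,2,5;1)$, so the enumeration of non-coiled $8$-cycles in Proposition~\ref{pro:noncoiled8} applies, and by Lemma~\ref{le:8traces} the only possible $8$-cycle traces are $T_1$, $T_2$, $T_3$, $T_4$, and $T_{13} = n^8$ (since $T_{12}$ is excluded by the $\CPM(3,2,3;1)$ hypothesis). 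Since $\G$ is $2$-arc-transitive, the number of $8$-cycles through an anchor must equal the number through a non-anchor, and I will derive a contradiction from this equality using the counts in Table~\ref{tab:8noncoiled} together with an enumeration of the coiled $T_{13}$ cycles (which exist only when $ms \mid 8$, by Lemma~\ref{le:coiled_ms}, and contribute only to the non-anchor count).

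The main obstacle is this final $8$-cycle balance step, which requires a clean case split on $s$. For $s \geq 3$ the traces $T_3$ and $T_4$ are impossible, so the balance equation reduces to $2 + [n=8] = 1 + (\text{number of }T_{13}\text{ cycles through a non-anchor})$, and one can check that no admissible combination of $ms \in \{4,8\}$, the existence condition for $T_{13}$, and $n = 8$ is consistent. For $s = 2$, the existence of $T_3$ or $T_4$ imposes conditions of the form $2 \pm 4r^2 = 0$, $4 \pm 2r^2 = 0$, or $2(1 \pm r^2) = 0$; any of these either forces $r^2 = \pm 1$ (which together with $s = 2$ produces $6$-cycles of trace $an^2an^2$, contradicting our assumption) or reduces, via Lemma~\ref{le:discon} and the divisibility constraints, to one of the small graphs $\CPM(3,2,3;1)$ or $\CPM(5,2,5;1)$, again a contradiction. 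Hence no $6$-cycle-free $2$-arc-transitive $\CPM$ graph exists under Assumption~\ref{as:1}, completing the proof.
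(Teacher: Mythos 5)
Your sufficiency argument and the outer reductions (the cases $ms=2$ and $n=4$ via Lemma~\ref{le:ms=2}, Proposition~\ref{pro:n=4} and Lemma~\ref{le:discon}, then $ms\geq 4$ via Corollary~\ref{cor:ms=3}, then invoking Proposition~\ref{pro:6m=n} when a $6$-cycle exists) coincide with the paper's proof. The gap is in the step you describe as ``derive a contradiction from the balance equation'': counting $8$-cycles through anchors versus non-anchors does \emph{not} by itself yield a contradiction in the $6$-cycle-free case, and the two concrete claims you make there do not hold. First, your dichotomy for $T_3$ is false: the condition $4\pm 2r^2=0$ (or $2\pm 4r^2=0$) neither forces $r^2=\pm 1$ nor reduces to $\CPM(3,2,3;1)$ or $\CPM(5,2,5;1)$; for example $r^2=2$ in $\ZZ_7$ satisfies $4-2r^2=0$. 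The paper eliminates the $T_3$ possibility quite differently: when $T_3$ is present the balance forces the number $t_c$ of coiled $8$-cycles through a non-anchor to be $3$, and one then shows $t_c$ must be a perfect square (the two label conditions \eqref{eq:coiled8_1}--\eqref{eq:coiled8_2} have equally many solutions), so $t_c=1$, which is incompatible.

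Second, and more seriously, after all trace eliminations one is left with the case $s=2$, $ms=8$ (so $m=4$), $r^4\neq\pm 1$, where the only $8$-cycles are the generic ones and coiled ones with exactly $t_c=1$ through each non-anchor. There the count is perfectly balanced: two $8$-cycles through every anchor and $1+1=2$ through every non-anchor, so no counting argument of the kind you propose can finish. This is precisely where the paper has to work hardest: it proves $r^8=1$ and that exactly one of $1\pm r^2+r^4\pm r^6=0$ holds, and then abandons counting in favour of the \emph{distinguished closed walk} device (as in the proof of Proposition~\ref{pro:6m=n}): the distinguished closed walk at a suitable $2$-arc has length divisible by $8$, while $2$-arc-transitivity forces it to have the same length as a $G$-alternating cycle, namely $n$ or $2n$, and comparing the two conditions modulo $8$ gives the final arithmetic contradiction. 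Your proposal contains no substitute for this idea, so the necessity direction is not established. (Your $s\geq 3$ subcase is salvageable, but you should make it explicit: $s$ divides $ms\in\{4,8\}$, so $s\in\{4,8\}$, and then each non-anchor lies on at least $2(s-2)\geq 4$ coiled $8$-cycles, exceeding what the balance allows.)
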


\begin{proof}
That the graphs from all three items of the theorem are indeed $2$-arc-transitive, follows from Lemma~\ref{le:discon}, Proposition~\ref{pro:iso2} and Proposition~\ref{pro:2AT}. 

To prove the converse, suppose $\G$ is $2$-arc-transitive. By way of contradiction assume that none of the conditions from the three items of the theorem hold. By Lemma~\ref{le:ms=2}, Proposition~\ref{pro:n=4} and Corollary~\ref{cor:ms=3} we can assume that $n \neq 4$ and $ms \geq 4$. If $s = 2$ and $2(r^2 \pm 1) = 0$, then the graph $\G$ possesses $6$-cycles (of trace $an^2an^2$), which by Proposition~\ref{pro:6m=n} contradicts our assumption. It thus follows that $s \geq 3$ or $2(r^2 \pm 1) \neq 0$. In particular, $\G$ has no $8$-cycles of trace $T_4=an^3an^3$.

From this and Proposition \ref{pro:noncoiled8},   we have that the only possibilities for the trace of an 8-cycle in $\Gamma$ are $T_1, T_2, T_3$ and $T_{13} = n^8$. For the purposes of this proof we let $t_c$ be the number of coiled $8$-cycles (those of trace $T_{13}$) through any given non-anchor; further, for $i = 1, 2, 3$, we let $a_i$ be the number of cycles of trace $T_i$ through a given anchor and $b_i$ be the number of cycles of trace $T_i$ through a given non-anchor.  Let $A =  a_1+a_2+a_3$ and $B= b_1+b_2+b_3$. By 2-arc-transitivity, we must have $A = B+t_c$.

 We proceed by proving a series of claims.

\smallskip

\noindent
{\sc Claim 1:} $1 \leq t_c \leq 3$.\\
First notice that  the conditions for $T_2$ and $T_3$ are incompatible: the condition $n= 8$ forces $r^2$ to be 1, making all four of $4\pm 2r^2 = 0$, $2 \pm 4r^2 = 0$ impossible.
Then we see from Table \ref{tab:8noncoiled} that the possibilities for $(A, B)$ and the existence of non-coiled 8-cycles are: (2,1)  [only $T_1$ occurs], (3,1) [$T_1$ and $T_2$], (6,3) [$T_1$ and $T_3$].  To make $A = B+t_c$, then, we must have $t_c = 1, 2$, or 3, respectively.

\smallskip

\noindent
{\sc Claim 2:} $m = 4$, $s = 2$ and $r^4 \pm 1 \neq 0$.\\
By Claim~1 coiled $8$-cycles exist, and so Lemma~\ref{le:coiled_ms} forces $ms \in \{4,8\}$. If $s \geq 4$, then the remark following Proposition~\ref{pro:HATgroup} implies that each non-anchor lies on at least $2(s-2) \geq 4$ different coiled $8$-cycles, contradicting Claim~1. Therefore, $s = 2$. Suppose $r^4 \pm 1 = 0$ and consider the $2$-arc $P = (\laa 0;(0,0)\raa, \laa 1;(1,0)\raa, \laa 2;(1,r)\raa )$. For any $\delta_2, \delta_3 \in \{-1,1\}$ there is thus a coiled $8$-cycle of $\G$ through 
$$
P \cdot (\laa 3;(1+\delta_2 r^2, r)\raa, \laa 4;(1+\delta_2 r^2, r + \delta_3 r^3)\raa),
$$
showing that $P$ lies on at least four different coiled $8$-cycles. As this contradicts Claim~1, this proves that $r^4 \pm 1 \neq 0$, and so $r^{ms} = \pm 1$ forces $m = 4$.
\smallskip

\noindent
{\sc Claim 3:} $t_c = 1$.\\
As $r^4 \neq 1$, we cannot have $n = 8$, and so Table~\ref{tab:8noncoiled} implies that there are no $8$-cycles of trace $a^8$. Consider again the $2$-arc $P$ from the proof of Claim~2. The number of coiled $8$-cycles through $P$ clearly equals the number of solutions (in $\delta_i \in \{-1,1\}$, $2 \leq i \leq 7$) of the system of equations
\begin{align}
	1+\delta_2r^2+\delta_4r^4+\delta_6r^6 & = 0 \label{eq:coiled8_1} \\
	r+\delta_3r^3+\delta_5r^5+\delta_7r^7 & = 0. \label{eq:coiled8_2}
\end{align}
Observe that, as $r \in \ZZ_n^*$, \eqref{eq:coiled8_1} has as many solutions as does \eqref{eq:coiled8_2}, and so $t_c$ is a perfect square. Claim~1 thus implies that $t_c = 1$. From the proof of Claim 1, we see that the only non-coiled 8-cycles are the generic ones, i.e., the ones of trace $T_1$.
\smallskip

\noindent
{\sc Claim 4:} $r^8 = 1$ and precisely one of $1 + r^2 + r^4 + r^6 = 0$ and $1 - r^2 + r^4 - r^6 = 0$ holds.\\
Write $r^8 = \delta$, where $\delta \in \{-1, 1\}$. If we multiply \eqref{eq:coiled8_1} by $r^2$, then use $r^8 = \delta$, then multiply by $\delta\delta_6$,  we have:
\begin{align}
	1+\delta\delta_6r^2+\delta\delta_2\delta_6r^4+\delta\delta_4\delta_6r^6 & = 0 \label{eq:coiled8_3} 
\end{align}
 Since \eqref{eq:coiled8_1} has a unique solution, we must have $\delta_2 = \delta\delta_6$, then $\delta_4 = \delta\delta_2\delta_6 = \delta(\delta\delta_6)\delta_6 =(\delta)^2(\delta_6)^2 = 1$, and finally $\delta_6 = \delta\delta_4\delta_6$, forcing $\delta = 1$.

Therefore, $\delta_4 = 1$, and so $\delta_2 = \delta_6$. In other words, one of $1 + r^2 + r^4 + r^6 = 0$ and $1 - r^2 + r^4 - r^6 = 0$ holds. Note that by Claim~3 both of these two conditions cannot hold simultaneously.
\smallskip

\noindent
We are now ready to obtain the final contradiction. Let $\epsilon \in \{-1, 1\}$ be such that $1 + \epsilon r^2 + r^4 + \epsilon r^6 = 0$. By Claim~3 and the proof of Claim~1 only coiled and generic $8$-cycles exist in $\G$ and there are precisely two $8$-cycles through any anchor and precisely two $8$-cycles through any non-anchor. Consider again the $2$-arc $P$ from the proof of Claim~2. The unique coiled $8$-cycle through it contains $P\cdot (\laa 3;(1+\epsilon r^2,r)\raa)$, the unique generic $8$-cycle through it contains $P\cdot (\laa 1;(1,2r)\raa)$, while there is no $8$-cycle through $P\cdot (\laa 3;(1-\epsilon r^2,r)\raa)$. As in the proof of Proposition~\ref{pro:6m=n} we can thus speak of {\em distinguished closed walks} at a given $2$-arc (we keep adding the unique neighbor of the current last vertex such that the corresponding terminal $3$-arc does not lie on any $8$-cycle of $\G$). The initial section of the distinguished closed walk at $P$ is thus
$$
P \cdot (\laa 3;(1- \epsilon r^2,r)\raa, \laa 4;(1- \epsilon r^2,r(1- \epsilon r^2))\raa, \ldots , \laa 0;(1- \epsilon r^2+r^4- \epsilon r^6,r(1- \epsilon r^2+r^4- \epsilon r^6))\raa).
$$
This walk is clearly of length $d = 8(n/\mathrm{gcd}(n,1- \epsilon r^2+r^4- \epsilon r^6))$, which is thus divisible by $8$. Since $\G$ is $2$-arc-transitive, the same must hold for the length of the distinguished closed walk at $P' = (\laa 0;(0,0)\raa, \laa 1;(1,0)\raa, \laa 0;(2,0)\raa)$. Since no $8$-cycle of $\G$ contains two consecutive anchors, the distinguished closed walk at $P'$ is thus the corresponding $G$-alternating cycle (where $G$ is as in Proposition~\ref{pro:HATgroup}). It is thus of length $2n$ or $n$, depending on whether $n$ is odd or even, respectively. As this equals $d$ (which is divisible by $8$), $n$ must be even, and so $d = n$ and $\mathrm{gcd}(n, 1 - \epsilon r^2 + r^4 - \epsilon r^6) = 8$. Since this implies $8 \mid n$, we must have $1 - \epsilon r^2 + r^4 - \epsilon r^6 \equiv 0 \pmod {8}$. But as $1 + \epsilon r^2 + r^4 + \epsilon r^6 = 0$, we also have that $1 + \epsilon r^2 + r^4 + \epsilon r^6 \equiv 0 \pmod{8}$, and so $2 + 2r^4 \equiv 0 \pmod{8}$, which is clearly impossible as $r^2 \equiv 1 \pmod {8}$. 
\end{proof}

\section{The automorphism group of the $\CPM$ graphs}
\label{sec:AutGr}

In this section we determine the automorphism groups of all $\CPM$ graphs which in turn enables us to determine which of the $\CPM$ graphs are half-arc-transitive. We first focus on the most symmetric examples, namely the $2$-arc-transitive ones. 

\begin{theorem}
\label{the:Aut2AT}
Let $m, s, n$ be integers with $s \geq 2$ and $n \geq 3$, and let $r \in \ZZ_n^*$ be such that $r^{ms} = \pm 1$. If the graph $\G = \CPM(m,s,n;r)$ is $2$-arc-transitive, then the vertex-stabilizers in $\Aut(\G)$ are of order $24$ and the automorphism group $\Aut(\G)$ is generated by the group $G$ from Proposition~\ref{pro:HATgroup} and $\nu$ from the proof of Proposition~\ref{pro:2AT} or the restriction of $\nuall$ to $\G$.
\end{theorem}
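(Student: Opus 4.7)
By Theorem~\ref{the:2ATclass} every $2$-arc-transitive $\CPM$ graph has $s=2$ and falls into one of three families. Case~(i) is the skeleton of the $4$-cube, already handled by Proposition~\ref{pro:n=4}. From now on I assume we are in case~(ii) or~(iii), so $n\neq 4$, $ms\geq 4$, Assumption~\ref{as:1} applies, and Proposition~\ref{pro:HATgroup} gives $|G_v|=2^s=4$. By the last sentence of Proposition~\ref{pro:blocks}, the restriction of $\Aut(\G)_v$ to $N(v)$ is $S_4$, so $|\Aut(\G)_v|=24\,|K|$, where $K$ denotes the pointwise stabilizer of $\{v\}\cup N(v)$. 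The subgroup $H$ of $\Aut(\G)$ generated by $G$ together with $\nu$ (or $\nuall|_\G$) is $2$-arc-transitive by Proposition~\ref{pro:2AT}, so $|H_v|\geq 24$. If we can prove $K=\{1\}$, then $|H_v|=|\Aut(\G)_v|=24$, and vertex-transitivity of both groups forces $H=\Aut(\G)$, yielding both conclusions of the theorem at once.

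The plan to show $K=\{1\}$ is by induction on the distance from $v$: I would prove that every $\alpha\in K$ fixes all vertices of $\G$, the only nontrivial step being the distance-$2$ one. For each $w\in N(v)$ and each $u\in N(w)\setminus\{v\}$ I would define the \emph{short-cycle fingerprint}
$$
D(u) = \{w'\in N(v)\setminus\{w\} : \text{some $6$-cycle or short $8$-cycle of }\G\text{ contains the path }(w',v,w,u)\}.
$$
Because $r^2=\pm 1$ in case~(ii) and $1+r^2\equiv m\pmod n$ in case~(iii) both give $2(1\pm r^2)=0$, Proposition~\ref{pro:6m=n} guarantees that $\G$ has $6$-cycles of trace $an^2an^2$; together with the generic and $T_3$- (respectively $T_4$-)type $8$-cycles from Proposition~\ref{pro:noncoiled8}, these populate $D(u)$ richly enough. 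Since $\alpha$ fixes $N(v)$ pointwise and preserves codes of walks, it must preserve each $D(u)$. If $u\mapsto D(u)$ is injective on $N(w)\setminus\{v\}$, then $\alpha$ fixes every such $u$, and the same argument propagates to all larger distances via vertex-transitivity of $G$ and connectedness of $\G$.

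The main obstacle is precisely the injectivity of the fingerprint map, which reduces to a direct computation using the explicit coordinates of short cycles from Sections~\ref{sec:6cycles} and~\ref{sec:8cycles}. Writing $N(w)\setminus\{v\}=\{u_0,u_+,u_-\}$ where $u_0$ sits in the same block $V_i$ as $v$ (making $(v,w,u_0)$ an anchor) and $u_\pm$ do not, the $6$-cycles through the anchor $(v,w,u_0)$ return to $v$ through the two non-anchor neighbors of $v$, whereas the $6$-cycles through the non-anchors $(v,w,u_\pm)$ return through the anchor neighbor of $v$ opposite to $w$; this already separates $u_0$ from $\{u_+,u_-\}$. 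The remaining separation of $u_+$ from $u_-$ then comes from the distinct values $\pm r$ appearing as intermediate coordinates of these $6$-cycles (and, in case~(iii), of the $T_3$-type $8$-cycles). It is here that the precise arithmetic constraints on $r$ guaranteed by Theorem~\ref{the:2ATclass}, and not merely $r^{ms}=\pm 1$, are essential: without them the fingerprints collapse, the rigidity argument fails, and one could not bound $|\Aut(\G)_v|$ above by $24$.
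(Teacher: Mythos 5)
Your global skeleton---reduce to showing that the pointwise stabilizer $K$ of a closed neighbourhood is trivial, then compare $\Aut(\G)_v$ with the stabilizer in the subgroup generated by $G$ and $\nu$---is the same as the paper's, and that reduction is sound (modulo the small point that $2$-arc-transitivity of $H=\la G,\nu\ra$ alone only forces the local action at $v$ to contain $A_4$; you need the transpositions supplied by the $\tauall_\ell$'s, as in the last paragraph of Proposition~\ref{pro:blocks}, to get $S_4$ and hence $|H_v|\ge 24$). The genuine gap is the distance-two rigidity step, which is the whole content of the theorem and which you defer to ``a direct computation'' after a sketch that is wrong in the decisive place. Take $v=\laa 0;(0,0)\raa$, $w=\laa 1;(1,0)\raa$, $u_\pm=\laa 2;(1,\pm r)\raa$ and $n\ge 5$, so that all $6$-cycles have trace $an^2an^2$. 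It is not true that the two $6$-cycles through the non-anchor $(v,w,u_+)$ return to $v$ only through the anchor neighbour $\laa 1;(-1,0)\raa$: one of them does, but the other is $(\laa -1;(0,\varepsilon r^{-1})\raa, v, w, u_+, \laa 1;(1,2r)\raa, \laa 0;(0,2r)\raa)$ with $\varepsilon$ determined by $2(1-\varepsilon r^2)=0$, and it returns through a non-anchor neighbour. Under your stated description one gets $D(u_+)=D(u_-)$, and both of your rescue devices fail: the ``distinct values $\pm r$ appearing as intermediate coordinates'' belong to vertices at distance two from $v$, which the unknown automorphism is not yet known to fix, so they are not an invariant you may appeal to; and in case~(iii) there are no $8$-cycles of trace $T_3$ at all, since with $r^2\equiv m-1\pmod{2m}$, $m\equiv 2\pmod 4$ and $m\ge 6$, the quantities $4\pm 2r^2$ and $2\pm 4r^2$ take only the values $\pm 2$ and $6$ modulo $2m$ and never vanish (cf.\ Table~\ref{tab:8noncoiled}). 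The fingerprint idea can be salvaged---the sign $\varepsilon$ above flips when $u_+$ is replaced by $u_-$, so the $6$-cycle fingerprints do differ---but proving this, and that no other short cycles destroy injectivity, is precisely the computation you left out.

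For comparison, the paper avoids the ``positive'' fingerprint altogether and uses the negative one already prepared in the proof of Proposition~\ref{pro:6m=n}: every $2$-arc has a unique extension to a $3$-arc lying on no $6$-cycle (its successor), so an automorphism fixing $\laa 1;(1,0)\raa$ and all of its neighbours fixes the successors of $(\laa 0;(0,0)\raa,\laa 1;(1,0)\raa,\laa 2;(1,r)\raa)$ and of $(\laa 0;(2,0)\raa,\laa 1;(1,0)\raa,\laa 2;(1,r)\raa)$, that is, both neighbours of $\laa 2;(1,r)\raa$ in $V_3$; the fourth neighbour is then forced, and no $8$-cycle bookkeeping is needed. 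Finally, your treatment of the exceptional small graphs is incomplete: Proposition~\ref{pro:n=4} gives the stabilizer order $24$ for the $4$-cube but not the generation statement, and $\CPM(3,2,3;1)$ (which occurs in case~(ii)) also has $6$-cycles of traces $a^6$ and $n^6$ and is explicitly excluded from Proposition~\ref{pro:noncoiled8}, so your cycle census does not apply to it; the paper settles both of these graphs by a direct computer check before assuming $n\ge 5$.
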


\begin{proof}
By Theorem~\ref{the:2ATclass}, $s = 2$ and we can assume that one of items (ii) and (iii) from that theorem holds. In particular, $2(1 \pm r^2) = 0$, and so $\G$ possesses $6$-cycles of trace $an^2an^2$. That $\CPM(2,2,4;1)$ and $\CPM(3,2,3;1)$ have the properties stated in this theorem, can be verified directly by a computer. For the rest of the proof we can thus assume that $n \geq 5$. The first part of the proof of Proposition~\ref{pro:6m=n} thus implies that the only $6$-cycles of $\G$ are those of trace $an^2an^2$ and that there are precisely two $6$-cycles through any anchor of $\G$ and precisely two $6$-cycles through any non-anchor of $\G$. It also shows that we have certain distinguished closed walks in $\G$.

By Proposition~\ref{pro:blocks} the first part of the proof will be complete if we can show that the only automorphism of $\G$ fixing a vertex and all of its neighbors is the identity. Since $\G$ is connected and $2$-arc-transitive, it suffices to prove that for some vertex $x$ of $\G$ and some neighbor $y$ of $x$, each automorphism of $\G$ fixing $x$ and all of its neighbors also fixes all neighbors of $y$. Let $\alpha \in \Aut(\G)$ be an automorphism fixing the vertex $\laa 1;(1,0)\raa$ and all of its neighbors. We show that then $\alpha$ also fixes all neighbors of $\laa 2;(1,r)\raa$. Let $\delta \in \{-1,1\}$ be such that $2(1+\delta r^2) = 0$. The proof of Proposition~\ref{pro:6m=n} shows that the distinguished closed walk at $P = (\laa 0;(0,0)\raa, \laa 1;(1,0)\raa; \laa 2;(1,r)\raa)$ continues through $\laa 3;(1-\delta r^2, r)\raa$. Since $P$ is left fixed pointwise by $\alpha$, $\laa 3;(1-\delta r^2, r)\raa$ must also be fixed by $\alpha$. Similarly, the distinguished closed walk at $P' = (\laa 0;(2,0)\raa, \laa 1;(1,0)\raa, \laa 2;(1,r)\raa)$ continues through $\laa 3;(1+\delta r^2,r)\raa$, and as $P'$ is left fixed pointwise by $\alpha$, $\laa 3;(1+\delta r^2,r)\raa$ is also fixed by $\alpha$. Thus, $\alpha$ fixes each neighbor of $\laa 2;(1,r)\raa$.

To complete the proof recall that the group $G$ from Proposition~\ref{pro:HATgroup} acts vertex-transitively. It is easy to see that the restrictions of $\tauall_0$, $\tauall_1$ and $\nuall$ to $\G$ (or $\nu$) suffice to get all $24$ elements in the stabilizer of the vertex $\laa 0;(0,0)\raa$.
\end{proof}

The next results determine the automorphism group of the non-2-arc-transitive $\CPM$ graphs and classify the half-arc-transitive examples.

\begin{proposition}
\label{pro:not2AT_HATgroup}
Let $m, s, n$ and $r$ be as in Assumption~\ref{as:1}, let $\G = \CPM(m,s,n;r)$ and let $G$ be as in Proposition~\ref{pro:HATgroup}. If $\G$ is not $2$-arc-transitive, then one of the following holds:
\begin{itemize}\itemsep = 0pt
\item[(i)] $\G$ is half-arc-transitive and $\Aut(\G) = G$;
\item[(ii)] $\G$ is arc-transitive and $[\Aut(\G) : G] = 2$.
\end{itemize}
\end{proposition}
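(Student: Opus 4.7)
Plan: Since $\G$ is not $2$-arc-transitive, Proposition~\ref{pro:blocks} implies that the sets $V_i$ form a system of blocks of imprimitivity for $\Aut(\G)$. Consequently, the action of $\Aut(\G)$ on $\{V_0,V_1,\ldots,V_{ms-1}\}$ yields a homomorphism $\pi\colon \Aut(\G)\to \Aut(C_{ms})\cong D_{2ms}$, where $C_{ms}$ is the cycle of length $ms$ formed by the blocks. Since $G$ projects onto the cyclic subgroup $\la\bar{\sigma}\ra\cong\ZZ_{ms}$ of $D_{2ms}$, the image $\pi(\Aut(\G))$ is either $\ZZ_{ms}$ or the full $D_{2ms}$, and so $[\pi(\Aut(\G)):\pi(G)]\in\{1,2\}$. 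Set $K=\ker\pi$; then $G\cap K$ is the setwise stabilizer of $V_0$ in $G$, which has index $ms$ in $G$. Hence, once we establish $K\leq G$, we obtain $|\Aut(\G)|=|\pi(\Aut(\G))|\cdot|K|\leq 2ms\cdot(|G|/ms)=2|G|$, and so $[\Aut(\G):G]\in\{1,2\}$.

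The heart of the proof is thus $K\leq G$. I first verify that the pointwise stabilizer $H$ of $V_0$ in $\Aut(\G)$ is trivial. If $\alpha\in H$, then each $\laa 1;\mb{u}\raa\in V_1$ must be sent by $\alpha$ to a vertex of $V_1$ having the same pair $\{\laa 0;\mb{u}-\mb{e_0}\raa,\laa 0;\mb{u}+\mb{e_0}\raa\}$ of $V_0$-neighbors. A short check shows that two distinct vertices of $V_1$ can share such a pair only if $4\mb{e_0}=\mb{0}$ in $\ZZ_n^s$, i.e. only if $n\mid 4$, which is excluded by Assumption~\ref{as:1} ($n\neq 4$). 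Therefore $\alpha$ fixes $V_1$ pointwise, and the same argument propagates around the cycle of blocks to give $\alpha=\mathrm{id}$. In particular $K$ acts faithfully on $V_0$, so to obtain $K\leq G$ it suffices to prove $K_{\laa 0;\mb{0}\raa}=T$ (the reverse inclusion $T\leq K_{\laa 0;\mb{0}\raa}$ being clear from Proposition~\ref{pro:HATgroup}).

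The equality $K_{\laa 0;\mb{0}\raa}=T$ is proved by an iterative reduction. An element $\alpha\in K_{\laa 0;\mb{0}\raa}$ permutes the four neighbors of $\laa 0;\mb{0}\raa$ while preserving their partition into $V_1$- and $V_{ms-1}$-neighbors; the induced action lies in $\ZZ_2\times\ZZ_2$ and is already realized by $\la\tauall_0,\tauall_{s-1}\ra\leq T$, so after modifying $\alpha$ by a suitable element of this subgroup we may assume $\alpha$ fixes $\laa 0;\mb{0}\raa$ together with all four of its neighbors. Passing to $\laa 1;\mb{e_0}\raa$, the same principle applies to its two $V_2$-neighbors, which are swapped by $\tauall_1$; a direct verification shows that $\tauall_1$ fixes all previously pinned vertices (the coordinate it negates is zero on each of them). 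Continuing along the $G$-directed arc through $\tauall_2,\ldots,\tauall_{s-2}$ consumes the remaining binary degrees of freedom; what remains of $\alpha$ then fixes $V_0$ pointwise by the propagation already employed above, hence is the identity. This shows $|K_{\laa 0;\mb{0}\raa}|\leq 2^s=|T|$, so $K_{\laa 0;\mb{0}\raa}=T$ and $K\leq G$.

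With $[\Aut(\G):G]\in\{1,2\}$ established, if the index is $1$ then $\Aut(\G)=G$ is half-arc-transitive, yielding (i); if it is $2$ then the nontrivial coset of $G$ projects into $D_{2ms}\setminus\ZZ_{ms}$, so its elements reverse the quotient cycle and hence the $G$-induced orientation on the edges of $\G$, fusing the two $G$-orbits on arcs and making $\G$ arc-transitive, yielding (ii). The main obstacle will be the iterative reduction step, which requires careful bookkeeping at each use of $\tauall_\ell$ to make sure it fixes all the vertices pinned down so far; the case $s=2$ is especially delicate because $\tauall_{s-1}=\tauall_1$ has already been consumed at the first step, and a separate argument tracing generic $8$-cycles is needed to show that any automorphism fixing $\laa 0;\mb{0}\raa$ along with all four of its neighbors must then fix $V_0$ pointwise.
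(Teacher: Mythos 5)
Your global frame is fine and is essentially an equivalent repackaging of the paper's argument: the block system from Proposition~\ref{pro:blocks}, the dihedral bound $\pi(\Aut(\G))\leq D_{2ms}$, and the reduction of both statements to showing that the kernel $K$ of the block action satisfies $K\leq G$ (equivalently $K_{\laa 0;\mb{0}\raa}=T$) correspond exactly to the paper's claim that the orientation-preserving subgroup $\Aut(\vec{\G})$ equals $G$; the endgame (index $1$ gives (i), index $2$ forces an orientation-reversing element and hence (ii)) is correct, as is the propagation showing that an element of $K$ fixing $V_0$ pointwise is the identity (as stated for all of $\Aut(\G)$ your triviality claim for $H$ silently assumes $V_1$ is not swapped with $V_{ms-1}$, but you only need it for $K$, so this is a cosmetic issue).

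The genuine gap is in the step $K_{\laa 0;\mb{0}\raa}=T$, which is where all the real work lies and where $n\neq 4$ must enter beyond merely providing the blocks. After modifying $\alpha$ by $\tauall_0,\tauall_{s-1},\tauall_1,\ldots,\tauall_{s-2}$, all you know is that $\alpha$ fixes finitely many vertices: $\laa 0;\mb{0}\raa$, its four neighbours, and the pairs of out-neighbours of $\laa 1;\mb{e_0}\raa,\ldots,\laa s-2;\cdot\raa$ along the arc \eqref{eq:s_arc}. The assertion that ``what remains of $\alpha$ then fixes $V_0$ pointwise by the propagation already employed above'' is a non sequitur: that propagation passes from a \emph{whole} block fixed pointwise to the next block, while here only one vertex of the roughly $n^s$ (or $(n/2)^s$) vertices of $V_0$ has been pinned. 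Concretely, you have not excluded that $\alpha$ fixes the pinned configuration yet swaps the two out-neighbours of the terminal vertex $\laa s-1;\cdot\raa$ (and similarly further along), and no element of $T$ remains to absorb such a swap; for $s=2$ you explicitly concede that a ``separate argument tracing generic $8$-cycles'' is needed but do not give it, and for $s\geq 3$ the same missing argument is hidden in the unsupported propagation claim. This is exactly the crux that the paper handles with its walk argument: an automorphism fixing the $s$-arc \eqref{eq:s_arc} pointwise and swapping its two extensions in $V_{s+1}$ would fix the $G$-alternating cycle through $\laa 0;\mb{0}\raa\laa 1;\mb{e_0}\raa$ pointwise, hence fix $\laa 1;(1-2r^s)\mb{e_0}\raa$, and the image of the walk of length $s-1$ from that vertex to $\laa s;\mb{f}-2r^s\mb{e_0}\raa$ through $V_2,\ldots,V_{s-1}$ would force $2r^s=-2r^s$, contradicting $n\neq 4$; combined with the regularity of $G$ on directed $s$-arcs and a forward-propagation (strong connectivity) argument, this gives $\Aut(\vec{\G})=G$, i.e.\ your $K\leq G$. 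Until you supply an argument of this type (applied, in your setup, to the shifted $s$-arc starting at $\laa ms-1;r^{ms-1}\mb{e_{s-1}}\raa$), the proof is incomplete.
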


\begin{proof}
By Proposition~\ref{pro:blocks} the sets $V_i$ from \eqref{eq:V_i} are blocks of imprimitivity for $\Aut(\G)$. Orient the edge $\laa 0;\mb{0}\raa \laa 1;\mb{e_0}\raa$ from $\laa 0;\mb{0}\raa$ to $\laa 1;\mb{e_0}\raa$ and then use the action of the half-arc-transitive group $G$ to orient all other edges of $\G$. Each edge in $\G[V_i, V_{i+1}]$ is then oriented from $V_i$ to $V_{i+1}$. Denote by $\vec{\G}$ the corresponding oriented graph and let $A = \Aut(\vec{\G})$. In other words, $A$ is the subgroup of $\Aut(\G)$ consisting of all automorphisms of $\G$ preserving the above chosen orientation. Of course, $G \leq A$. We claim that in fact $A = G$, which then implies (i) and (ii) from the proposition. To shorten notation in this proof and the proof of Theorem~\ref{the:HAT} denote 
\begin{equation}
\label{eq:f}
\mb{f} = \mb{e_0} + r\mb{e_1}+r^2\mb{e_2} + \cdots + r^{s-2}\mb{e_{s-2}} + r^{s-1}\mb{e_{s-1}}.
\end{equation}

By way of contradiction assume $A \neq G$. By the remark following Proposition~\ref{pro:HATgroup} the group $G$ acts $s$-arc-transitively on $\vec{\G}$, and so the fact that $G \lneq A$ implies that $A$ acts $(s+1)$-arc-transitively on $\vec{\G}$. There thus exists an automorphism $\alpha \in A$, fixing pointwise the $s$-arc 
\begin{equation}
\label{eq:s_arc}
	(\laa 0;\mb{0}\raa, \laa 1;\mb{e_0}\raa, \laa 2;\mb{e_0}+r\mb{e_1}\raa, \ldots , \laa s;\mb{f}\raa),
\end{equation}
and swapping $\laa s+1;\mb{f} + r^s\mb{e_0}\raa$ and $\laa s+1;\mb{f} -r^s\mb{e_0}\raa$. Since the sets $V_i$ are blocks of imprimitivity for $\Aut(\G)$ (and thus also for $A$), $\alpha$ fixes each one of them setwise. Therefore, 
$$
\laa s;\mb{f}-2r^s\mb{e_0}\raa \alpha = \laa s;\mb{f} + 2r^s\mb{e_0}\raa.
$$
Since $\alpha$ fixes both $\laa 0;\mb{0}\raa$ and $\laa 1;\mb{e_0}\raa$, it fixes pointwise the whole $G$-alternating cycle containing the corresponding edge, and so it must also fix $\laa 1;(1-2r^s)\mb{e_0}\raa$. Note that there is a walk of length $s-1$ from $\laa 1;(1-2r^s)\mb{e_0}\raa$ to $\laa s;\mb{f}-2r^s\mb{e_0}\raa$ going through $V_2, V_3, \ldots , V_{s-1}$. There should thus also exist a walk of length $s-1$ from $\laa 1;(1-2r^s)\mb{e_0}\raa$ to $\laa s;\mb{f}+2r^s\mb{e_0}\raa$ going through $V_1, V_2, \ldots , V_{s-1}$. But this clearly implies $-2r^s = 2r^s$, contradicting $n \neq 4$. Therefore, $A = G$, as claimed. 
\end{proof}

\begin{theorem}
\label{the:HAT}
Let $m, s, n$ be integers with $s \geq 2$ and $n \geq 3$, and let $r \in \ZZ_n^*$ be such that $r^{ms} \pm 1 = 0$. Then the graph $\CPM(m,s,n;r)$ is half-arc-transitive if and only if $2(r^{2s} \pm 1) \neq 0$. Moreover, in this case its automorphism group coincides with the group $G$ from Proposition~\ref{pro:HATgroup} and has vertex-stabilizers isomorphic to the elementary abelian $2$-group of rank $s$.
\end{theorem}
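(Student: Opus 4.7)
The plan is to establish both directions of the equivalence. The ``if'' direction is immediate: Proposition~\ref{pro:AT} shows that $2(r^{2s}\pm 1)=0$ forces $\G$ to be arc-transitive, hence not half-arc-transitive. For the converse, I argue the contrapositive. The boundary cases outside Assumption~\ref{as:1}, namely $ms\leq 2$ (Lemma~\ref{le:ms=2}) and $n=4$ (Proposition~\ref{pro:n=4}), yield $r^2=\pm 1$ directly; if $\G$ is $2$-arc-transitive, Theorem~\ref{the:2ATclass} forces $s=2$ together with $r^2\equiv\pm 1$ or $1+r^2\equiv m\pmod{2m}$ with $m\equiv 2\pmod 4$, whence $r^{2s}=r^4=1$. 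All these satisfy $2(r^{2s}-1)=0$. So the real work lies in the case where Assumption~\ref{as:1} holds and $\G$ is arc-transitive but not $2$-arc-transitive, in which Proposition~\ref{pro:not2AT_HATgroup} yields $[\Aut(\G):G]=2$; I fix some $\eta\in\Aut(\G)\setminus G$.

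I next normalize $\eta$ via elements of $G$. Proposition~\ref{pro:blocks} says the sets $V_i$ from (\ref{eq:V_i}) are blocks of imprimitivity for $\Aut(\G)$, and the $G$-action on them realizes the full cyclic shift group on $\ZZ_{ms}$. Since $\eta\notin G$, it must induce a reflection on this block cycle; after composing with a suitable shift from $G$ and then with an element of the $V_0$-setwise stabilizer of $G$, I may assume $\eta$ fixes $\laa 0;\mb{0}\raa$ and that $V_i\eta=V_{-i}$ for every $i\in\ZZ_{ms}$. One more composition, with a suitable involution from $\Tall|_\G\cong\ZZ_2^s$, gives $\laa 1;\mb{e_0}\raa\eta=\laa -1;r^{-1}\mb{e_{s-1}}\raa$. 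Because $\eta$ reverses the $G$-induced orientation of the edges of $\G$, it sends the forward $s$-arc $(\laa 0;\mb{0}\raa,\laa 1;\mb{e_0}\raa,\laa 2;\mb{e_0}+r\mb{e_1}\raa,\ldots,\laa s;\mb{f}\raa)$ (with $\mb{f}$ as in (\ref{eq:f})) to a specific backward $s$-arc starting at $\laa 0;\mb{0}\raa$; by the $s$-arc-regularity of $G$ on the oriented graph (remark after Proposition~\ref{pro:HATgroup}), this backward $s$-arc is pinned down step by step.

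I then extend this to all of $\G$. Conjugation by $\eta$ is an automorphism of $G$ (which is normal in $\Aut(\G)$) and must intertwine $\sigmaall$ with a backward-shift element of $G$; combining this with the forced values on the initial $s$-arc propagates to an explicit description of $\eta$ on each $V_i$ up to a permitted $\Tall$-ambiguity. The outcome is that on $V(\G)$, $\eta$ must agree (modulo a $\Tall|_\G$-coset) either with the automorphism $\etaall$ of (\ref{eq:eta}) or, when $4\mid n$ and $4\mid m$, with the twisted variant $\etaall'$ of (\ref{eq:eta'}). A direct computation then gives $\eta^2(\laa 0;\mb{v}\raa)=\laa 0;r^{-2s}\mb{v}\raa$ for every $\laa 0;\mb{v}\raa\in V_0$; since $\eta^2\in G_{\laa 0;\mb{0}\raa}=\Tall|_\G$ must act by negating a fixed subset of components, consistency over all such $\mb{v}$ forces either $r^{-2s}\equiv\pm 1\pmod n$ (so $r^{2s}\pm 1=0$) or, exploiting that $V_0$ consists of vertices whose components are all even when $4\mid n$, only $r^{-2s}\equiv\pm 1\pmod{n/2}$ (so $2(r^{2s}\pm 1)=0$). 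Either way $2(r^{2s}\pm 1)=0$, as desired. Once half-arc-transitivity has been characterized, Proposition~\ref{pro:not2AT_HATgroup}(i) gives $\Aut(\G)=G$ in the half-arc-transitive case, and the vertex-stabilizer assertion $\cong\ZZ_2^s$ follows from Proposition~\ref{pro:HATgroup}. The main obstacle is precisely the globalization step above: rigorously ruling out any ``exotic'' shape of $\eta$ outside (\ref{eq:eta})--(\ref{eq:eta'}) requires a careful induction along the backward $s$-arc and a systematic accounting of $\tauall$-sign choices across adjacent $V_i$'s.
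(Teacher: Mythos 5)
Your reductions (the ``if'' direction via Proposition~\ref{pro:AT}, the boundary cases $ms\le 2$ and $n=4$, and the $2$-arc-transitive case via Theorem~\ref{the:2ATclass}) match the paper and are fine. The gap is in the core case, where Assumption~\ref{as:1} holds and $\G$ is arc-transitive but not $2$-arc-transitive: your argument hinges on the claim that, after normalization, the orientation-reversing automorphism $\eta$ agrees, modulo a $\Tall|_\G$-coset, with $\etaall$ from \eqref{eq:eta} or $\etaall'$ from \eqref{eq:eta'}. This claim is never proved --- you yourself flag it as ``the main obstacle'' --- and it is not a routine verification; moreover it cannot be established independently of the congruence you are trying to prove, since the formulas \eqref{eq:eta} and \eqref{eq:eta'} define automorphisms of $\G$ only when $2(r^{2s}\pm 1)=0$, so any rigorous coordinate-by-coordinate derivation of the shape of $\eta$ would have to produce the congruence along the way; that derivation is precisely the missing content. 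The intermediate assertions are also not justified as stated: knowing $\laa 0;\mb{0}\raa\eta$ and $\laa 1;\mb{e_0}\raa\eta$ does not ``pin down'' the image of the $s$-arc \eqref{eq:s_arc} (there remain $2^{s-1}$ backward $s$-arcs with those first two vertices; $s$-arc-regularity of $G$ on the oriented graph only helps once the full image is known), and the statement that conjugation by $\eta$ ``must intertwine $\sigmaall$ with a backward-shift element of $G$'' is vague ($\sigmaall$ itself need not stabilize $V(\G)$ when $n$ is even) and does not by itself yield an explicit description of $\eta$ on each $V_i$.

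For comparison, the paper avoids any global determination of the extra automorphism. It takes $\beta\in\Aut(\G)$ reversing the $s$-arc \eqref{eq:s_arc}, notes that $\beta$ maps the $G$-alternating cycle through the edge $\laa 0;\mb{0}\raa\laa 1;\mb{e_0}\raa$ to the one through $\laa s-1;\mb{f}-r^{s-1}\mb{e_{s-1}}\raa\laa s;\mb{f}\raa$, reads off $\laa 1;(1-2r^s)\mb{e_0}\raa\beta$ from the cycle structure, and then compares a walk of length $s-1$ from $\laa 1;(1-2r^s)\mb{e_0}\raa$ to $\laa s;\mb{f}-2r^s\mb{e_0}\raa$ with its image to obtain $2r^{2s-1}=\pm 2r^{ms-1}$, hence $2(r^{2s}\pm 1)=0$. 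Your concluding computation would indeed finish the job if the globalization were available (the square of the model map sends $\laa 0;\mb{v}\raa$ to $\laa 0;r^{-2s}\mb{v}\raa$, and membership of $\eta^2$ in $\Tall|_\G$ then forces the congruence), but as written the proposal defers exactly the step that carries all the difficulty, so it is not a complete proof.
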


\begin{proof}
By Lemma~\ref{le:ms=2}, Proposition~\ref{pro:n=4} and Corollary~\ref{cor:ms=3} we can assume that $n \neq 4$ and $ms \geq 4$. The second part of the theorem thus follows from Proposition~\ref{pro:HATgroup} and Proposition~\ref{pro:not2AT_HATgroup}. As for the first part, one implication follows from Proposition~\ref{pro:AT}. To complete the proof assume that $\G$ is arc-transitive and let us prove that then $2(r^{2s} \pm 1) = 0$. If $m \leq 2$ this clearly holds, so we can assume that $m \geq 3$ (and consequently $ms \geq 6$). If $\G$ is $2$-arc-transitive, Theorem~\ref{the:2ATclass} ensures that $2(r^{2s} \pm 1) = 0$. 

We are thus left with the possibility that $\G$ is arc-transitive but not $2$-arc-transitive. The idea of the proof is very similar to the one from the proof of Proposition~\ref{pro:not2AT_HATgroup}. By Proposition~\ref{pro:blocks} the sets $V_i$ from \eqref{eq:V_i} are blocks of imprimitivity for $\Aut(\G)$. Choose the $G$-induced orientation of the edges of $\G$ from the proof of Proposition~\ref{pro:not2AT_HATgroup}. Since $\G$ is arc-transitive, Proposition~\ref{pro:HATgroup} implies that there exists $\beta \in \Aut(\G)$ reversing the $s$-arc from \eqref{eq:s_arc}. It follows that the $G$-alternating cycle containing the arc $(\laa 1;\mb{e_0}\raa, \laa 0;\mb{0}\raa)$ is mapped to the $G$-alternating cycle containing the arc $(\laa s-1;\mb{f} - r^{s-1}\mb{e_{s-1}}\raa, \laa s;\mb{f}\raa)$, where $\mb{f}$ is as in \eqref{eq:f}. In particular, 
$$
	\laa 1;(1-2r^s)\mb{e_0}\raa\beta = \laa s-1 ; \mb{f} + (2r^{2s-1} - r^{s-1})\mb{e_{s-1}}\raa.
$$
Of course, the vertex $\laa s;\mb{f}-2r^s\mb{e_0}\raa$ is mapped by $\beta$ to one of $\laa 0;\pm 2r^{ms-1}\mb{e_{s-1}}\raa$. But then $\beta$ has to map the walk of length $s-1$ from $\laa 1;(1-2r^s)\mb{e_0}\raa$ to $\laa s;\mb{f} -2r^s\mb{e_0}\raa$ going through each of $V_2, V_3, \ldots , V_{s-1}$ to a walk of length $s-1$ from $\laa s-1 ; \mb{f} + (2r^{2s-1} - r^{s-1})\mb{e_{s-1}}\raa$ to one of $\laa 0;\pm 2r^{ms-1}\mb{e_{s-1}}\raa$ going through each of $V_1, V_2, \ldots , V_{s-2}$. Therefore, $2r^{2s-1} = \pm 2r^{ms-1}$, and so $2r^{2s} = \pm 2$, as claimed.
\end{proof}

Combining together Proposition~\ref{pro:AT}, Lemma~\ref{le:ms=2}, Proposition~\ref{pro:n=4} and the results of this section thus yields the following corollary.

\begin{corollary}
\label{cor:Aut_AT}
Let $m, s, n$ be integers with $s \geq 2$ and $n \geq 3$, and let $r \in \ZZ_n^*$ be such that $r^{ms} \pm 1 = 0$. Let $\G = \CPM(m,s,n;r)$ and let $G$ be as in Proposition~\ref{pro:HATgroup}. Then $\G$ is arc-transitive if and only if $2(r^{2s} \pm 1) = 0$. Moreover, if $\G$ is arc-transitive but not $2$-arc-transitive then either $\G \cong \CPM(m,s,4;1)$ is a Praeger-Xu graph and has vertex-stabilizers in $\Aut(\G)$ of order $2^{s(m-1)+1}$, or $\Aut(\G)$ is generated by $G$ and the restriction of $\etaall$ or $\etaall'$ from the remark following Proposition~\ref{pro:AT} to $\G$ in which case the vertex-stabilizers in $\Aut(\G)$ are of order $2^{s+1}$.
\end{corollary}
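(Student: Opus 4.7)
The plan is to assemble the corollary by routing each configuration of parameters to the appropriate earlier statement; there is essentially no new content, only bookkeeping. First I would dispose of the two boundary regimes excluded from Assumption~\ref{as:1}. If $ms = 2$ (that is $m = 1$, $s = 2$) then $r^{2s} = (r^{ms})^2 = 1$, so the condition $2(r^{2s} \pm 1) = 0$ holds trivially, and Lemma~\ref{le:ms=2} already gives arc-transitivity together with the vertex-stabilizer order $8 = 2^{s+1}$ (for $n \neq 4$) or $24$ (in the exceptional $4$-cube case). If $n = 4$ then $r \in \{1,3\}$ forces $r^2 = 1$, the condition again holds, and Proposition~\ref{pro:n=4} identifies $\G$ with $\mathrm{PX}(ms,s)$ and records the vertex-stabilizer order $2^{s(m-1)+1}$, which is the first alternative in the ``moreover'' clause.

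For the parameters satisfying Assumption~\ref{as:1}, the first equivalence is essentially a restatement of Theorem~\ref{the:HAT}. Since every $\CPM$ graph admits the half-arc-transitive group $G$ of Proposition~\ref{pro:HATgroup}, it is either half-arc-transitive or arc-transitive; Theorem~\ref{the:HAT} says it is half-arc-transitive precisely when $2(r^{2s} \pm 1) \neq 0$, and so arc-transitive precisely when $2(r^{2s} \pm 1) = 0$. (The sufficient direction is directly Proposition~\ref{pro:AT}, while the necessary direction is the contrapositive of Theorem~\ref{the:HAT}.)

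For the structural statement, assume $\G$ is arc-transitive but not $2$-arc-transitive and $n \neq 4$. Then Proposition~\ref{pro:not2AT_HATgroup}(ii) yields $[\Aut(\G) : G] = 2$, so $\Aut(\G) = \langle G, \alpha\rangle$ for any $\alpha \in \Aut(\G) \setminus G$. The remark following Proposition~\ref{pro:AT} exhibits a concrete such $\alpha$: the restriction of $\etaall$ if $r^{2s} \pm 1 = 0$, and of $\etaall'$ in the remaining subcase where $r^{2s} \pm 1 \neq 0$ but $2(r^{2s} \pm 1) = 0$. By the equivalence already established these two subcases exhaust the situation. Finally, the order of a vertex-stabilizer in $\Aut(\G)$ is $[\Aut(\G) : G] \cdot |G_v| = 2 \cdot 2^s = 2^{s+1}$, using that $G_v \cong \ZZ_2^s$ by Proposition~\ref{pro:HATgroup}. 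The only delicate point in the whole argument is keeping track of the corner cases $ms \leq 3$ and $n = 4$, but these have already been isolated in Lemma~\ref{le:ms=2}, Corollary~\ref{cor:ms=3} and Proposition~\ref{pro:n=4}, so the proof reduces to citation once the case division is made explicit.
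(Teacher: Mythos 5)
Your proposal is correct and follows essentially the same route as the paper, which establishes the corollary purely by combining Proposition~\ref{pro:AT}, Lemma~\ref{le:ms=2}, Proposition~\ref{pro:n=4}, Proposition~\ref{pro:not2AT_HATgroup} and Theorem~\ref{the:HAT}, exactly the case-by-case citation you carry out. The one point you leave implicit --- that the restrictions of $\etaall$ and $\etaall'$ to $\G$ lie outside $G$ (they fix $V_0$ setwise but interchange $V_1$ with $V_{ms-1}$, hence reverse the $G$-induced orientation), which is what makes them valid coset representatives for the index-$2$ extension --- is at the same level of detail as the paper's own treatment.
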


\section{Isomorphisms and concluding remarks}
\label{sec:Iso}

In this final section we consider the possible isomorphisms between different $\CPM$ graphs and give some directions for possible future research. As in the previous section we first consider the $2$-arc-transitive $\CPM$ graphs.

\begin{proposition}
\label{pro:all_iso_2AT}
Let $m,s,n$ and $r$, as well as $m', s', n'$ and $r'$ be as in Assumption~\ref{as:1}. If the graphs $\G = \CPM(m,s,n;r)$ and $\G' = \CPM(m',s',n';r')$ are both $2$-arc-transitive then $\G' \cong \G$ if and only if $m' = m$, $s' = s = 2$, $n' = n$ and $(r'r^{-1})^2 = \pm 1$.
\end{proposition}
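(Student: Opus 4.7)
The plan is to handle the two directions separately, using the classification of $2$-arc-transitive $\CPM$ graphs from Theorem~\ref{the:2ATclass} together with an order count to pin down the parameters, and Proposition~\ref{pro:iso2} to recover the isomorphisms.

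For the ``if'' direction, suppose $m'=m$, $s'=s=2$, $n'=n$ and $(r'r^{-1})^2 = \pm 1$. Then $(r^{-1}r')^s = (r^{-1}r')^2 = \pm 1$, so Proposition~\ref{pro:iso2}(i) immediately gives $\CPM(m,2,n;r') \cong \CPM(m,2,n;r)$.

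For the ``only if'' direction, assume $\G \cong \G'$ and both are $2$-arc-transitive. By Assumption~\ref{as:1}, $n,n'\neq 4$, so case (i) of Theorem~\ref{the:2ATclass} is excluded, and each of $\G, \G'$ satisfies either (ii) or (iii) of that theorem; in particular $s=s'=2$. The next step is an order count. Under (ii) we have $m=n$ with $n$ odd and $\G$ connected, so $|V(\G)| = msn^s = 2n^3$, with $n$ odd. Under (iii) we have $n = 2m$ with $m\equiv 2\pmod 4$ (so $m$ is even), and since $m$ is even and $n$ is even we get $|V(\G)| = ms(n/2)^s = 2m^3$, with $m$ even. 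As the cube of an odd integer is odd and the cube of an even integer is even, the two order formulas $2n^3$ (odd base) and $2m^3$ (even base) cannot coincide. Hence $\G$ and $\G'$ satisfy the same item.

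The two remaining cases are then straightforward. If both lie in case (ii), matching orders forces $n' = n$, hence $m' = m = n$; and since both $r, r' \in \ZZ_n^*$ satisfy $r^2 = \pm 1$ and $r'^2 = \pm 1$, we have $(r'r^{-1})^2 = r'^2 r^{-2} = (\pm 1)(\pm 1) = \pm 1$. If both lie in case (iii), matching orders forces $m' = m$, hence $n' = n = 2m$, and since $r^2 = r'^2 = m-1$ in $\ZZ_n$, again $(r'r^{-1})^2 = 1 = \pm 1$. In every situation the stated conclusion holds. There is no real obstacle here: the work has already been done by Theorem~\ref{the:2ATclass} and Proposition~\ref{pro:iso2}, and the only non-automatic step is the parity/order observation that rules out any crossover between cases (ii) and (iii).
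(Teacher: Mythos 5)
Your proposal is correct and follows essentially the same route as the paper: the forward direction via Proposition~\ref{pro:iso2}(i), and the converse via Theorem~\ref{the:2ATclass} together with an order comparison (your parity-of-the-base argument is just a rephrasing of the paper's mod-$4$ count) to separate cases (ii) and (iii) and then read off $m'=m$, $n'=n$ and $r'^2=\pm r^2$.
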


\begin{proof}
Suppose $\G$ and $\G'$ are both 2-arc-transitive. By Theorem~\ref{the:2ATclass} we have that $s' = s = 2$. Now, if $m' = m$, $n' = n$ and $(r'r^{-1})^2 = \pm 1$ all hold, then $\G' \cong \G$ by Proposition~\ref{pro:iso2}.

To prove the converse suppose $\G' \cong \G$. By Theorem~\ref{the:2ATclass} either $n$ is odd and $m = n$, in which case $\G$ is of order $2n^3 \equiv 2 \pmod{4}$, or $n \equiv 4 \pmod{8}$ and $m = n/2$, in which case $\G$ is of order $2m^3 \equiv 0 \pmod{4}$. As $\G$ and $\G'$ are of the same order it thus follows that $n$ and $n'$ are of the same parity and consequently also $m' = m$ and $n' = n$. Theorem~\ref{the:2ATclass} also implies that if $n$ is odd then $r^2 = \pm 1$ and (as $n' = n$) $r'^2 = \pm 1$, while if $n$ is even $r^2 = m - 1 = r'^2$. In any case, $r'^2 = \pm r^2$, as claimed.
\end{proof}

In Proposition~\ref{pro:iso2} two sufficient conditions for a pair of $\CPM$ graphs to be isomorphic was given. The next result gives a partial converse to Proposition~\ref{pro:iso2}. The reader will note that, in view of Lemma~\ref{le:discon}, the assumptions on the parity of $n, n', m$ and $m'$ can always be achieved using appropriate isomorphisms.

\begin{proposition}
\label{pro:alliso}
Let $m,s,n$ and $r$, as well as $m', s', n'$ and $r'$ be as in Assumption~\ref{as:1} where in addition we assume that each of $n$ and $n'$ is either odd or divisible by $4$ and if $n$ or $n'$ is even then $m$ or $m'$, respectively, is also even. Suppose that $\CPM(m,s,n;r) \cong \CPM(m',s',n';r')$. Then $m = m'$, $s = s'$, $n = n'$ and one of the following holds:
\begin{itemize}\itemsep = 0pt
\item[(i)] $(rr')^s  = \pm 1$ or $(r^{-1}r')^s = \pm 1$;
\item[(ii)] $n$ is divisible by $4$, $m$ is even, and either $2((rr')^s \pm 1) = 0$ or $2((r^{-1}r')^s \pm 1) = 0$.
\end{itemize}
\end{proposition}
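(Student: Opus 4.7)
The goal is a partial converse of Proposition~\ref{pro:iso2}. My plan has two stages: first use structural invariants to force $m = m'$, $s = s'$, $n = n'$, and then analyze an explicit isomorphism to pin down the algebraic relation between $r$ and $r'$.

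For the parameter matching, the $2$-arc-transitive case is handled immediately by Proposition~\ref{pro:all_iso_2AT} (its conclusion gives $s = 2$ and satisfies condition (i) of the statement). In the remaining cases Theorem~\ref{the:HAT} and Corollary~\ref{cor:Aut_AT} give vertex-stabilizer order $2^s$ (HAT) or $2^{s+1}$ (AT but not $2$-AT), with the Praeger-Xu case $n = 4$ easily separated by its exceptionally large stabilizer. Since $|V(\G)|$ and stabilizer order are isomorphism invariants, this recovers $s = s'$ together with the common symmetry type. To separate $m$ from $n$ I would use the $G$-alternating cycles of Proposition~\ref{pro:HATgroup}: their length is $n$ or $2n$, and in the HAT case they are canonical since $\Aut(\G) = G$, while in the AT not $2$-AT case the HAT subgroup $G$ is normal of index $2$ in $\Aut(\G)$, so the corresponding alternating cycle partition is still a graph invariant. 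Combined with the vertex count this forces $n = n'$ and then $m = m'$.

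With the parameters matched, let $\phi \colon \G \to \G'$ be an isomorphism. Using vertex-transitivity and the action of $G'$ on arcs together with the pointwise stabilizer of an arc, I would compose $\phi$ with suitable elements of $\Aut(\G')$ to arrange $\phi(\laa 0; \mb{0}\raa) = \laa 0; \mb{0}\raa$ and $\phi(\laa 1; \mb{e_0}\raa) \in \{\laa 1; \mb{e_0}\raa, \laa -1; \mb{e_{s-1}}\raa\}$. By Proposition~\ref{pro:blocks} the sets $V_i$ are preserved, so the first ``direct'' case gives $\phi(V_i) = V_i$ and the second ``reversed'' case gives $\phi(V_i) = V_{-i}$. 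In the direct case, further composing with automorphisms from the vertex-stabilizer (which act by sign-changes on the components of $\mb{v}$) I would force $\phi$ into the form of $\Phi$ (or $\Phi'$ when $4 \mid n$) from the proof of Proposition~\ref{pro:iso2}, with some $q \in \ZZ_n^*$; the adjacency-preservation condition imposed at the ``wrap-around'' between blocks $V_{ms-1}$ and $V_0$ then forces $q = r^{-1}r'$ together with $q^s = \pm 1$, or the weaker $2(q^s \pm 1) = 0$ when $4 \mid n$ and $m$ is even. The reversed case, after pre-composing with $\Psi$ from Lemma~\ref{le:iso}, reduces to the direct case with $r$ replaced by $r^{-1}$, giving the same conditions but with $rr'$ in place of $r^{-1}r'$.

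The main obstacle will be the careful bookkeeping in the last step. First, I will have to identify precisely which elements of the vertex-stabilizer (and, in the AT case, of the orientation-reversing coset represented by $\etaall$ or $\etaall'$) are needed to reduce a general $\phi$ to the canonical forms $\Phi$ or $\Phi'$; second, the adjacency check at the wrap-around must correctly account for the shift $\delta_j = n/2$ appearing in the $\Phi'$ case, which is what distinguishes condition (i) from condition (ii) in the conclusion. A secondary concern is ensuring that in the AT but not $2$-AT case the HAT subgroup preserved under conjugation by $\phi$ is indeed (conjugate to) the canonical one from Proposition~\ref{pro:HATgroup}, which follows from the normality of $G$ established via Corollary~\ref{cor:Aut_AT}.
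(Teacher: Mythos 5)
Your first stage follows the paper's route and is sound in outline, but two of its justifications are not quite what you need. Proposition~\ref{pro:blocks} is a statement about $\Aut(\G)$ of a single graph; to conclude that an isomorphism $\phi\colon \G \to \G'$ carries each $V_i$ to some $V'_j$ you need the additional observation that $\phi$ must map non-anchors to non-anchors (through a fixed mid-vertex there are four non-anchors but only two anchors, and by Proposition~\ref{pro:blocks} each class is a single orbit of the respective automorphism group), after which the block-preservation follows by the code argument. Likewise, ``$G$ is normal of index $2$, hence the alternating-cycle partition is a graph invariant'' is not a proof: conjugation by $\phi$ only shows that $\phi G\phi^{-1}$ is \emph{some} normal index-$2$ half-arc-transitive subgroup of $\Aut(\G')$, and identifying its alternating cycles with the $G'$-alternating cycles again comes down to the anchor argument. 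Finally, $n = n'$ does not follow from alternating-cycle length plus the vertex count alone (that still allows $n' = 2n$ with $n$ odd); you must invoke the parity hypothesis, since the length is $2n \equiv 2 \pmod 4$ for $n$ odd and $n \equiv 0 \pmod 4$ for $n$ even. All of this is fixable, and it is how the paper argues.

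The genuine gap is the crux of your second stage: the claim that, after composing with elements of $G'$ and of a vertex-stabilizer, $\phi$ can be ``forced into the form of $\Phi$ or $\Phi'$'' from the proof of Proposition~\ref{pro:iso2}. The stabilizer supplies only sign changes (plus, in the arc-transitive case, the reflection), and these suffice merely to normalize $\phi$ on the $s$-arc \eqref{eq:s_arc}; beyond that $\phi$ is essentially rigid, and you give no mechanism for why it must be diagonal of the form $(v_0,qv_1,\ldots,q^{s-1}v_{s-1})$ -- in particular the shifts $\delta_j = n/2$ of $\Phi'$ cannot be produced by stabilizer elements at all and would have to emerge from a global consistency analysis around the cycle of blocks, which is exactly where the conditions (i)/(ii) live. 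So asserting the canonical form of $\phi$ presupposes the conclusion, and carrying the forcing out honestly amounts to re-proving (a strengthening of) Theorem~\ref{the:HAT} for isomorphism cosets. The paper bypasses all of this with a short local argument: once $\phi$ maps the $s$-arc to the $s$-arc of $\G'$, it fixes $\laa 1;(1-2r^s)\mb{e_0}\raa$ (that vertex lies on the pointwise-fixed alternating cycle through the first edge), it must send $\laa s;\mb{f}-2r^s\mb{e_0}\raa$ to one of $\laa s;\mb{f}'\pm 2r'^s\mb{e_0}\raa$, and the walk of length $s-1$ joining these two vertices uses only labels $1,\ldots,s-1$ and hence cannot alter the $0$-th coordinate; this yields $2r^s = \pm 2r'^s$ at once, and the parity bookkeeping (with $q = r'r^{-1}$ odd, $q^{ms} = \pm 1$, $q^s \neq \pm 1$) then gives (i) or (ii). I recommend replacing your normalization step by this walk argument.
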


\begin{proof}
Denote $\G = \CPM(m,s,n;r)$ and $\G' = \CPM(m',s',n';r')$. In view of Proposition~\ref{pro:all_iso_2AT} we can assume that $\G$ (and hence also $\G'$) is not $2$-arc-transitive. By Proposition~\ref{pro:blocks} the sets $V_i$ from \eqref{eq:V_i} are blocks of imprimitivity for $\Aut(\G)$ and the sets $V'_i$ are blocks of imprimitivity for $\Aut(\G')$. Let $\Phi \colon \G \to \G'$ be an isomorphism mapping the vertex $\laa 0 ; \mb{0}\raa$ of $\G$ to the vertex $\laa 0 ; \mb{0}\raa$ of $\G'$. We can assume that $\Phi$ also maps $\laa 1 ; \mb{e_0}\raa$ of $\G$ to $\laa 1 ; \mb{e_0} \raa$ of $\G'$ (otherwise replace $r'$ by its inverse and use Lemma~\ref{le:iso}). By Proposition~\ref{pro:blocks} the set of non-anchors of $\G$ is an $\Aut(\G)$-orbit and similarly the set of non-anchors of $\G'$ is an $\Aut(\G')$-orbit. The fact that there are four non-anchors with a given vertex as their mid-vertex but only two anchors with this vertex as their mid-vertex thus implies that $\Psi$ maps non-anchors of $\G$ to non-anchors of $\G'$ and thus also anchors of $\G$ to anchors of $\G'$. This clearly shows that it has to map each set $V_i$ of $\G$ to the set $V'_i$ of $\G'$. 

Now, let $G$ be as in Proposition~\ref{pro:HATgroup} for $\G$ and let $G'$ be the group corresponding to the group $G$ from Proposition~\ref{pro:HATgroup} for $\G'$. The above remarks show that $\Phi$ maps the $G$-alternating cycles of $\G$ to the $G'$-alternating cycles of $\G'$. In particular, they are of the same length. As the $G$-alternating cycles of $\G$ are of length $2n$ or $n$, depending on whether $n$ is odd or even respectively, and $n$ is either odd or divisible by $4$, this length is either divisible by $4$ if and only if $n$ is even. This clearly shows that $n = n'$. Moreover, as $\G$ and $\G'$ must have vertex-stabilizers of the same size, the assumption $n \neq 4$, together with Theorem~\ref{the:HAT} and Corollary~\ref{cor:Aut_AT}, imply that $s = s'$. By assumption $\G$ is of order $msn^s$ or $ms(n/2)^s$, depending on whether $n$ is odd or even, respectively, and so the fact that $\G$ and $\G'$ are of the same order implies $m = m'$. 

To complete the proof we thus only have to show that the parameters $r$ and $r'$ satisfy one of the two conditions from (i) and (ii). We do this by using a similar idea as in the proofs of Proposition~\ref{pro:not2AT_HATgroup} and Theorem~\ref{the:HAT}. In view of Proposition~\ref{pro:HATgroup} and the above assumption on $\Phi$ we can assume that $\Phi$ in fact maps the $s$-arc from \eqref{eq:s_arc} to the $s$-arc
$$
	(\laa 0;\mb{0}\raa, \laa 1;\mb{e_0}\raa, \laa 2,\mb{e_0}+r'\mb{e_1}\raa, \ldots , \laa s;\mb{f}'\raa)
$$
of $\G'$, where $\mb{f}' = \mb{e_0}+r'\mb{e_1}+\cdots + r'^{s-1}\mb{e_{s-1}}$. It thus follows that $\laa 1;(1-2r^s)\mb{e_0}\raa\Phi = \laa 1;(1-2r^s)\mb{e_0}\raa$. But as $\laa s, \mb{f} - 2r^s\mb{e_0}\raa$ has to be mapped to one of $\laa s, \mb{f}' \pm 2r'^s\mb{e_0}\raa$, considering the walk of length $s-1$ from $\laa 1;(1-2r^s)\mb{e_0}\raa$ to $\laa s, \mb{f} - 2r^s\mb{e_0}\raa$, going through each of $V_2, V_3, \ldots , V_{s-1}$, shows that $2r^s = \pm 2r'^s$, and so $2((r'r^{-1})^s \pm 1) = 0$. If $(r'r^{-1})^s = \pm 1$, the proof is complete. We can thus assume that $n$ is even and $(r'r^{-1})^s = n/2 \pm 1$. Set $q = r'r^{-1}$ and note that the fact that $q \in \ZZ_n^*$ implies that $q$ is odd, and so $n$ must be divisible by $4$. Finally, as $q^{ms} = \pm 1$ but $q^s \neq \pm 1$, $m$ must be even, as claimed. 
\end{proof}

In view of Proposition~\ref{pro:iso2} and Proposition~\ref{pro:alliso} the only type of pairs of $\CPM$ graphs whose parameters satisfy the parity conditions from Proposition~\ref{pro:alliso} for which the question of whether the two graphs are isomorphic or not has not been settled, are the ones corresponding to item (ii) of Proposition~\ref{pro:alliso} with $m \equiv 2 \pmod{4}$. The fact that the graphs $\CPM(6,2,52;3)$ and $\CPM(6,2,52;15)$ are not isomorphic (this can be verified by a suitable computer software) suggests that in such a case the two graphs are not isomorphic. We thus pose the following question.

\begin{question}
Let $m \geq 2$, $s \geq 2$ and $n \geq 8$ be integers such that $n$ is divisible by $4$ and $m \equiv 2 \pmod{4}$. Suppose $r, r' \in \ZZ_n^*$ are such that $r^{ms} = \pm 1$, $r'^{ms} = \pm 1$, $(r^{-1}r')^s \neq \pm 1$ and $(rr')^s \neq \pm 1$. Is it true that then the graphs $\CPM(m,s,n;r)$ and $\CPM(m,s,n;r')$ are not isomorphic? 
\end{question}

We conclude the paper by a short discussion of possible future research projects. Using Theorem~\ref{the:HAT}, Proposition~\ref{pro:iso2} and Proposition~\ref{pro:alliso} it is easy to verify that the smallest connected loosely-attached (that is, with $s \geq 2$) half-arc-transitive $\CPM$ graph of even radius is the graph $\CPM(3,2,28;3)$ which is of order $2352$. This shows that none of the 15 tetravalent half-arc-transitive loosely attached graphs with even radius and vertex-stabilizers of order at least $4$ from the Census~\cite{PotSpiVer15} is a $\CPM$ graph. These 15 graphs thus must belong to some other family of such graphs. One could thus try to identify the corresponding family(ies) and classify the half-arc-transitive members as we have done for the $\CPM$ graphs. 

A good starting point for such an investigation is definitely by taking a closer look at the so-called Attebery graphs which were introduced in 2008 by Casey Attebery~\cite{Att08} (but see also~\cite{PotWil??}). These graphs represent a nice generalization of the $\CPM$ graphs containing not just the loosely-attached examples but also graphs with larger attachment numbers. We are convinced that some of our methods for the investigation of symmetries of the graphs in question will carry over to the larger class of Attebery graphs. 

One might also try to extend the results from the paper~\cite{KuzMalPot18} in which all elementary abelian covers over ``doubled cycles'' such that a vertex- and edge-transitive group of automorphisms lifts were characterized. Of course, the class of all corresponding covers contains some of the (half-arc-transitive) $\CPM$ graphs, but some graphs from \cite{KuzMalPot18} can be outside the $\CPM$ family while some $\CPM$ graphs are not elementary abelian covers of ``doubled cycles''. It should also be pointed out that determining which of the covers from~\cite{KuzMalPot18} are indeed half-arc-transitive is not at all an easy task. But perhaps some methods from this paper might be of use.

\section*{Acknowledgements}
All authors acknowledge support by the Slovenian Research Agency bilateral research project BI-US/18-20-075. \v S. Miklavi\v c acknowledges support by the Slovenian Research Agency (research core funding No. P1-0285 and research projects N1-0062, J1-9110, J1-1695, N1-0140). P.~\v Sparl acknowledges support by the Slovenian Research Agency (research core funding No. P1-0285 and research projects J1-9108, J1-9110, J1-1694, J1-1695).

\end{document}